\let\ams@starttoc\@starttoc
\let\@starttoc\ams@starttoc
\patchcmd{\@starttoc}{\makeatletter}{\makeatletter\parskip\z@}{}{}
\newcommand{\llb}{\left\lbrace}
\newcommand{\rrb}{\right\rbrace}
\renewcommand{\phi}{\varphi}
\newtheorem{thm}{Theorem}[section]
\theoremstyle{plain}
\newtheorem{prop}[thm]{Proposition}
\newtheorem{lem}[thm]{Lemma}
\newtheorem{cor}[thm]{Corollary}
\theoremstyle{definition}
\newtheorem{defn}[thm]{Definition}
\theoremstyle{remark}
\newtheorem{rem}[thm]{Remark}
\newtheorem{eg}[thm]{Example}
\title{Generalized rook-Brauer algebras and their homology}
\author{Daniel Graves}
\address{Lifelong Learning Centre, University of Leeds, Woodhouse, Leeds, LS2 9JT, UK}
\email{dan.graves92@gmail.com}
\date{}
\begin{document}

\keywords{generalized rook-Brauer algebra, rook-Brauer algebra, Brauer algebra, rook algebra, Motzkin algebra, Temperley-Lieb algebra, group homology, homological stability}
\subjclass{20J06, 16E30, 16E40, 20F36, 20E22}


\maketitle

\begin{abstract}
Rook-Brauer algebras are a family of diagram algebras. They contain many interesting subalgebras: rook algebras, Brauer algebras, Motzkin algebras, Temperley-Lieb algebras and symmetric group algebras. In this paper, we generalize the rook-Brauer algebras and their subalgebras by allowing more structured diagrams. We introduce equivariance by labelling edges of a diagram with elements of a group $G$. We introduce braiding by insisting that when two strands cross, they do so as either an under-crossing or an over-crossing. We also introduce equivariant, braided diagrams by combining these structures. We then study the homology of our diagram algebras, as pioneered by Boyd and Hepworth, using methods introduced by Boyde. We show that, given certain invertible parameters, we can identify the homology of our generalized diagram algebras with the group homology of the braid groups $B_n$ and the semi-direct products $G^n\rtimes \Sigma_n$ and $G^n\rtimes B_n$. This allows us to deduce homological stability results for our generalized diagram algebras. We also prove that for diagrams with an odd number of edges, the homology of equivariant Brauer algebras and equivariant Temperley-Lieb algebras can be identified with the group homology of $G^n\rtimes \Sigma_n$ and $G^n$ respectively, without any conditions on parameters.
\end{abstract}

\section*{Introduction}

Rook-Brauer algebras are a family of diagram algebras. They contain the Brauer algebras, Temperley-Lieb algebras, rook algebras, Motzkin algebras and symmetric groups algebras as subalgebras. These algebras have been established for many years and their representation theory is a rich area of research (see the histories of these algebras given later in this introduction). By comparison, the homology of the these algebras is a very young field indeed. The idea of studying the homology and homological stability of these (and related) algebras was pioneered by Boyd and Hepworth (\cite{BH1} and \cite{Hepworth}), who considered the homology of Temperley-Lieb algebras and the homology of Iwahori-Hecke algebras of type $A$. These papers have inspired many recent results on the homology of algebras: Iwahori-Hecke algebras of type $B$ have been studied in \cite{Moselle}; homological stability for Brauer algebras was proved in \cite{BHP}; the homology of rook algebras and rook-Brauer algebras has been developed in \cite{Boyde}; the homology of partition algebras has been studied in \cite{BHP2}; and the homology Graham-Lehrer cellular algebras has been studied in \cite{Boyde-cellular}.

In this paper we introduce generalizations of the rook-Brauer diagrams and their algebras by introducing equivariance and braiding, both separately and concurrently. Equivariance is introduced by adding labels from a group $G$ onto each of the edges in a rook-Brauer diagram. Braiding is introduced by insisting that when two edges meet they form an over-crossing or an under-crossing. With these two cases we can define the equivariant, braided rook-Brauer diagrams, where diagrams are braided and each strand comes with a label from the group $G$. Furthermore, since the rook-Brauer algebras have many interesting subalgebras, we obtain equivariant and braided versions of the Brauer algebras, Temperley-Lieb algebras, rook algebras and Motzkin algebras.

The study of the homology of algebras introduced by Boyd and Hepworth involves expressing the homology of a diagram algebra, $A$, as $\mathrm{Tor}_{\star}^A\left(\mathbbm{1},\mathbbm{1}\right)$ for some choice of trivial module $\mathbbm{1}$. In many cases we take the trivial module to be a copy of the ground ring, $k$, where a diagram acts as $1\in k$ if it has the maximum number of left-to-right connections and acts as zero otherwise.

We study the homology of our generalized rook-Brauer algebras and their subalgebras when certain parameters are invertible, by employing the methods introduced by Boyde \cite{Boyde}. Boyde studies the homology of rook algebras and rook-Brauer algebras using a theory of idempotents and ideals in the rook-Brauer algebras. He also shows that this framework can be used to recover results about Brauer algebras from \cite{BHP}. Two important results in this paper use this theory to identify the homology of rook algebras and the homology of Brauer algebras with the group homology of the symmetric group when certain parameters are invertible.

In this paper, by identifying the correct idempotents and ideals, we generalize these results to other families of groups. 

In Section \ref{gen-rook-hom-sec}, given certain invertible parameters and a group $G$, we will show that:
\begin{itemize}
\item the homology of $G$-equivariant rook algebras can be identified with the group homology of the semi-direct product $G^n\rtimes \Sigma_n$;
\item the homology of the braided rook algebras can be identified with the homology of the braid groups;
\item the homology of $G$-equivariant braided rook algebras can be identified with the group homology of the semi-direct product $G^n\rtimes B_n$;
\item we have homological stability results for these generalized rook algebras.
\end{itemize}

In Section \ref{gen-brauer-sec}, given certain invertible parameters and an abelian group $G$, we will show that:
\begin{itemize}
\item the homology of $G$-equivariant Brauer algebras can be identified with the group homology of the semi-direct product $G^n\rtimes \Sigma_n$;
\item the homology of the braided Brauer algebras can be identified with the homology of the braid groups;
\item the homology of $G$-equivariant braided Brauer algebras can be identified with the group homology of the semi-direct product $G^n\rtimes B_n$;
\item we have homological stability results for these generalized Brauer algebras;
\item the homology of the $G$-equivariant Temperley-Lieb algebra can be identified with the group homology of $G^n$.
\end{itemize}

In Section \ref{gen-RB-sec}, given certain invertible parameters, we will show that:
\begin{itemize}
\item the homology of generalized rook-Brauer algebras coincides with the homology of generalized Brauer algebras;
\item we have homological stability results for these generalized rook-Brauer algebras.
\end{itemize}

In Section \ref{motzkin-sec}, we study the homology of the Motzkin algebras. To the best of the author's knowledge the homology of the Motzkin algebras has not been studied before. Given certain invertible parameters, we will show that:
\begin{itemize}
\item in the non-equivariant case, the homology of the Motzkin algebras coincides with the homology of the Temperley-Lieb algebras;
\item we obtain a number of vanishing results and a statement on homological stability by combining our result with those of \cite{BH1};
\item in the equivariant case, given an abelian group $G$, the homology of the $G$-equivariant Motzkin algebras coincides with the homology of the $G$-equivariant Temperley-Lieb algebras. 
\end{itemize}

In Section \ref{sroka-sec} we prove some results about the homology of $G$-equivariant Brauer algebras and $G$-equivariant Temperley-Lieb algebras when the parameters are not invertible and $G$ is abelian. We show that:
\begin{itemize}
\item the homology of a $G$-equivariant Brauer algebra with an odd number of edges, $n$, coincides with the homology of $G^n\rtimes \Sigma_n$;
\item the homology of a $G$-Temperley-Lieb algebra with an odd number of edges, $n$, coincides with the group homology of $G^n$.
\end{itemize}

In order to set the scene for our generalized rook-Brauer algebras and their homology, we give a brief history of homological stability and of each of the algebras that we will be generalizing.

\subsection*{Homological stability}
Homological stability for groups refers to the following type of result. A family of groups $G_1\subset G_2 \subset G_3 \subset \cdots$ is said to satisfy homological stability if the group homology $H_i(G_n)$ is independent of $n$, up to isomorphism, for sufficiently large $n$. There are many famous examples of homological stability for groups: the symmetric groups \cite{Nakaoka}; the braid groups \cite{Arnold1}; general linear groups \cite{Charney}, \cite{VDK}; mapping class groups \cite{Harer}, \cite{HW}; automorphism groups of free groups \cite{HV}, \cite{HVW}, \cite{HV1}, \cite{HW2}; families of Coxeter groups \cite{Hepworth2}; diffeomorphism groups \cite{Tillmann}; automorphism groups \cite{RWW}; Higman-Thompson groups \cite{SW}; Artin monoids and Artin groups \cite{Boyd1}. 

Recently, the notion of homological stability for algebras has been introduced by Boyd and Hepworth. This involves a similar statement on homology, where the homology of algebras is interpreted in terms of certain $\mathrm{Tor}$ functors. This approach has been used to study Iwahori-Hecke algebras of type $A$ in \cite{Hepworth}; Temperley-Lieb algebras in \cite{BH1}; Iwahori-Hecke algebras of type $B$ in \cite{Moselle}; Brauer algebras in \cite{BHP}; rook-Brauer algebras in \cite{Boyde}; partition algebras in \cite{BHP2} and \cite{Boyde2}; and cellular algebras in \cite{Boyde-cellular}.

\subsection*{Brauer algebras}
Brauer algebras first appeared in \cite{Brauer}, in the study of the representation theory of orthogonal groups and symplectic groups. The representation theory of Brauer algebras and their generalizations has been well-studied (see \cite{CDDM}, \cite{CDM}, \cite{CDM2}, \cite{CDM3},\cite{Martin2}, \cite{DM}, \cite{KMP} for example).

The homology of the Brauer algebras has been studied in \cite{BHP} and \cite{Boyde}.

\subsection*{Temperley-Lieb algebras}
Temperley-Lieb algebras first appeared in \cite{TL} in the study of statistical mechanics. They later arose independently in work of Jones (\cite{Jones1} and \cite{Jones2}), studying von Neumann algebras and knot invariants. The representation theory and applications of Temperley-Lieb algebras and their variants have been well-studied (see \cite{MartinTL1}, \cite{MartinTL2}, \cite{MartinTL3}, \cite{MartinTL4}, \cite{MartinTL}, \cite{Westbury}, \cite{Abrahamsky}, \cite{RSA} for example).

The homology of the Temperley-Lieb algebras has been studied in \cite{BH1}, \cite{RW-pre}, \cite{Sroka} and \cite{Boyde}. 

\subsection*{Rook algebras}
The rook monoid (also known as the symmetric inverse semigroup) is the monoid of partial bijections on a set. The rook algebra is the corresponding monoid algebra. The naming convention comes from \cite{Solomon}, as the elements of the rook monoid are in bijection with the placement of non-attacking rooks on a chessboard. The representation theory of rook algebras and their generalizations has been widely studied (see \cite{HalvR}, \cite{Solomon}, \cite{Grood}, \cite{Halverson}, \cite{FHH}, \cite{BM}, \cite{Xiao}, \cite{Campbell} for example).

The homology of rook algebras has been studied in \cite{Boyde}.

\subsection*{Motzkin algebras}
Motzkin algebras were first introduced in \cite{BHal}. The name derives from the fact that the dimension of the $n^{th}$ Motzkin algebra is the $2n^{th}$ Motzkin number. These first appeared in \cite{Motzkin} and are defined as the number of ways of drawing non-intersecting chords between $2n$ points on a circle.  In \cite{BHal} it is shown that the Motzkin algebra appears as the centralizer algebra of a quantum enveloping algebra acting on a tensor power of irreducible modules. They have been further studied in \cite{JoYa}. The Motzkin algebras contain the Temperley-Lieb algebras as subalgebras.

\subsection*{Rook-Brauer algebras}
These are also known as \emph{partial Brauer algebras}. They first appeared in \cite{GW} and were further studied in \cite{KM}. Their representation theory has been studied in \cite{MM} and \cite{HD}. For example, in \cite{HD}, it is shown that the rook-Brauer algebras appear as the centralizer of an orthogonal group acting on a tensor algebra. The homology of the rook-Brauer algebras is studied in \cite{Boyde}.

Rook-Brauer algebras contain the Brauer algebras, the Temperley-Lieb algebras, rook algebras, Motzkin algebras and the symmetric group algebras.

\subsection*{Structure of the paper}
The paper is organized as follows.

In Section \ref{diagrams-sec}, we recall the definitions of rook-Brauer diagrams, rook diagrams, Brauer diagrams, Motzkin diagrams and  Temperley-Lieb diagrams. We then define our generalized versions of these diagrams: the equivariant diagrams, the braided diagrams and the equivariant braided diagrams.

In Section \ref{composition-sec}, we define how to compose generalized rook-Brauer diagrams. We discuss why we restrict to abelian groups for our results in Sections \ref{gen-brauer-sec}-\ref{sroka-sec}.

In Section \ref{generalized-algebra-sec}, we define the generalized rook-Brauer algebras and their subalgebras using the diagrams defined in Section \ref{diagrams-sec} and the composition defined in Section \ref{composition-sec}.

In Sections \ref{link-state-sec}, \ref{technical-results-sec} and \ref{group-alg-sec} we introduce the necessary generalizations of Boyde's results on ideals, idempotents and group algebras that we require to prove our results.

In Sections \ref{gen-rook-hom-sec}, \ref{gen-brauer-sec}, \ref{gen-RB-sec}, \ref{motzkin-sec}, \ref{sroka-sec}, we will prove the results outlined earlier in this introduction.

\subsection*{Acknowledgements}

My interest in the homology of diagram algebras was sparked by Guy Boyde's excellent talk at the $36\textsuperscript{th}$ British Topology Meeting at the University of Sheffield. I would like to thank Guy for his talk and the organizers (Brad Ashley, James Brotherston, James Cranch, Andrew Fisher, Paul Mitchener, Neil Strickland, Markus Szymik, Leyna Watson May and Sarah Whitehouse) for putting on such an enjoyable conference. I would like to thank Rachael Boyd and Guy Boyde for taking the time to read earlier versions of this paper and for all their helpful comments and suggestions. I would like to thank Leyna Watson May, Sarah Whitehouse, James Brotherston, Andrew Fisher, Andrew Neate, Jake Saunders, Jack Davidson and Guy Boyde for helpful and interesting conversations during the writing of this paper.  Finally, I would like to apologize to Natasha Cowley and James Brotherston, who probably heard more about this project whilst we were on holiday in Prague than they would have liked.

\section{Diagrams}
\label{diagrams-sec}
In this section we recall the definition of rook-Brauer diagrams, together with the subsets of rook diagrams, Brauer diagrams, Motzkin diagrams and Temperley-Lieb diagrams. We will then define our generalizations of these diagrams, where we introduce equivariance and braiding. We note that our notion of a generalized Temperley-Lieb algebra is not the same as that occurring in \cite{Green}.   

\subsection{Rook-Brauer diagrams}
\label{RB-diag-subsec}
We begin this section by recalling the definitions of rook-Brauer diagrams, rook diagrams, Brauer diagrams, Motzkin diagrams and  Temperley-Lieb diagrams.

\begin{defn}
We define diagrams as follows:
\begin{enumerate}
\item A \emph{rook-Brauer $n$-diagram} is a graph consisting of two columns of $n$ vertices such that each vertex is connected to at most one other by an edge. 
\item A \emph{rook $n$-diagram} is a rook-Brauer $n$-diagram having no left-to-left connections and no right-to-right connections.
\item A \emph{Brauer $n$-diagram} is a rook-Brauer $n$-diagram having no missing edges.
\item A \emph{Motzkin $n$-diagram} is a rook-Brauer $n$-diagram that is planar.
\item A \emph{Temperley-Lieb $n$-diagram} is a rook-Brauer $n$-diagram that is planar and has no missing edges.
\end{enumerate}
\end{defn}

\begin{eg}
Fix $n=4$. Consider the following diagrams:
\begin{center}
\begin{tikzpicture}
\node[circle,fill=black,inner sep=0pt,minimum size=3pt] (a) at (0,0) {};
\node[circle,fill=black,inner sep=0pt,minimum size=3pt] (a) at (0,1) {};
\node[circle,fill=black,inner sep=0pt,minimum size=3pt] (a) at (0,2) {};
\node[circle,fill=black,inner sep=0pt,minimum size=3pt] (a) at (0,3) {};
\node[circle,fill=black,inner sep=0pt,minimum size=3pt] (a) at (2,0) {};
\node[circle,fill=black,inner sep=0pt,minimum size=3pt] (a) at (2,1) {};
\node[circle,fill=black,inner sep=0pt,minimum size=3pt] (a) at (2,2) {};
\node[circle,fill=black,inner sep=0pt,minimum size=3pt] (a) at (2,3) {};
\node[circle,fill=black,inner sep=0pt,minimum size=3pt] (a) at (4,0) {};
\node[circle,fill=black,inner sep=0pt,minimum size=3pt] (a) at (4,1) {};
\node[circle,fill=black,inner sep=0pt,minimum size=3pt] (a) at (4,2) {};
\node[circle,fill=black,inner sep=0pt,minimum size=3pt] (a) at (4,3) {};
\node[circle,fill=black,inner sep=0pt,minimum size=3pt] (a) at (6,0) {};
\node[circle,fill=black,inner sep=0pt,minimum size=3pt] (a) at (6,1) {};
\node[circle,fill=black,inner sep=0pt,minimum size=3pt] (a) at (6,2) {};
\node[circle,fill=black,inner sep=0pt,minimum size=3pt] (a) at (6,3) {};
\node[circle,fill=black,inner sep=0pt,minimum size=3pt] (a) at (8,0) {};
\node[circle,fill=black,inner sep=0pt,minimum size=3pt] (a) at (8,1) {};
\node[circle,fill=black,inner sep=0pt,minimum size=3pt] (a) at (8,2) {};
\node[circle,fill=black,inner sep=0pt,minimum size=3pt] (a) at (8,3) {};
\node[circle,fill=black,inner sep=0pt,minimum size=3pt] (a) at (10,0) {};
\node[circle,fill=black,inner sep=0pt,minimum size=3pt] (a) at (10,1) {};
\node[circle,fill=black,inner sep=0pt,minimum size=3pt] (a) at (10,2) {};
\node[circle,fill=black,inner sep=0pt,minimum size=3pt] (a) at (10,3) {};
\node[circle,fill=black,inner sep=0pt,minimum size=3pt] (a) at (12,0) {};
\node[circle,fill=black,inner sep=0pt,minimum size=3pt] (a) at (12,1) {};
\node[circle,fill=black,inner sep=0pt,minimum size=3pt] (a) at (12,2) {};
\node[circle,fill=black,inner sep=0pt,minimum size=3pt] (a) at (12,3) {};
\node[circle,fill=black,inner sep=0pt,minimum size=3pt] (a) at (14,0) {};
\node[circle,fill=black,inner sep=0pt,minimum size=3pt] (a) at (14,1) {};
\node[circle,fill=black,inner sep=0pt,minimum size=3pt] (a) at (14,2) {};
\node[circle,fill=black,inner sep=0pt,minimum size=3pt] (a) at (14,3) {};
\draw (0,2) to[out=-10,in=-10] (0,3);
\draw (2,1) to[out=120,in=225] (2,3);
\draw (0,0) -- (2,2);
\draw (4,0) -- (6,1);
\draw (4,1) -- (6,3);
\draw (4,2) -- (6,0);
\draw (8,0) -- (10,0);
\draw (8,1) -- (10,2);
\draw (8,2) to[out=-10,in=-10] (8,3);
\draw (10,1) to[out=120,in=225] (10,3);
\draw (12,0) -- (14,2);
\draw (12,1) -- (14,3);
\draw (12,2) to[out=-10,in=-10] (12,3);
\draw (14,0) to[out=135,in=225] (14,1);
\end{tikzpicture}
\end{center}
\begin{itemize}
\item The first (leftmost) diagram is a rook-Brauer $4$-diagram. We have four nodes in each column and each node is connected to at most one other by an edge. We note that it cannot be a rook $4$-diagram since it has left-to-left and right-to-right connections. It also cannot be a Brauer $4$-diagram since it has a missing edge. Therefore, it also cannot be a Temperley-Lieb $4$-diagram (it's also not planar).
\item The second diagram is a rook $4$-diagram, since we have no left-to-left and no right-to-right connections. It cannot be a Brauer diagram since we have a missing edge. It cannot be a Motzkin diagram since it is not planar.
\item The third diagram is a Brauer $4$-diagram since there are no missing edges. It cannot be a rook $4$-diagram since we have left-to-left and right-to-right connections. It also cannot be a Motzkin $4$-diagram or a Temperley-Lieb $4$-diagram since it is not planar.
\item The fourth diagram is a Temperley-Lieb $4$-diagram (and therefore a Motzkin $4$-diagram). It has no missing edges and it is planar.
\end{itemize}
\end{eg}

\subsection{Generalized rook-Brauer diagrams 1: equivariance}
\label{equiv-diag-subsec}
In this subsection we define our first generalization of the diagrams in Subsection \ref{RB-diag-subsec} by introducing equivariance.

\begin{defn}
\label{GRB-diag-defn}
Let $G$ be a group. A \emph{$G$-rook-Brauer $n$-diagram} is a graph consisting of two columns of $n$ nodes such that
\begin{itemize}
\item each node is connected to at most one other node by an edge;
\item each edge is labelled by an element of the group $G$.
\end{itemize}
\end{defn}

\begin{defn}
\label{G-diag-defn}
Let $G$ be a group. 
\begin{enumerate}
\item A \emph{$G$-rook $n$-diagram} is a $G$-rook-Brauer $n$-diagram having no left-to-left and no right-to-right connections.
\item A \emph{$G$-Brauer $n$-diagram} is a $G$-rook-Brauer $n$-diagram having no missing edges.
\item A \emph{$G$-Motzkin $n$-diagram} is a $G$-rook-Brauer diagram that is planar.
\item A \emph{$G$-Temperley-Lieb $n$-diagram} is a $G$-rook-Brauer $n$-diagram that is planar and has no missing edges.
\end{enumerate}
\end{defn}

\begin{rem}
We note that given a $G$-labelled diagram from Definition \ref{GRB-diag-defn} or Definition \ref{G-diag-defn} we can recover the diagrams of Subsection \ref{RB-diag-subsec} simply by omitting the labels.
\end{rem}

\subsection{Generalized rook-Brauer diagrams 2: braiding}
\label{braid-diag-subsec}
In this subsection we define our second generalization of the diagrams in Subsection \ref{RB-diag-subsec} by introducing braiding.

\begin{defn}
\label{BRB-diag-defn}
A \emph{braided rook-Brauer $n$-diagram} is a graph consisting of two columns of $n$ nodes such that each node is connected to at most one other node by a strand. These strands are not planar. Whenever a strand crosses another it is either an over-crossing or an under-crossing.
\end{defn}

\begin{defn}
\label{braid-diag-defn} 
We have the following subsets of diagrams.
\begin{enumerate}
\item A \emph{braided rook $n$-diagram} is a braided rook-Brauer $n$-diagram having no left-to-left and no right-to-right connections.
\item A \emph{braided Brauer $n$-diagram} is a braided rook-Brauer $n$-diagram having no missing edges.
\end{enumerate}
\end{defn}

\begin{rem}
We note that it does not make sense to talk about braided Motzkin diagrams or braided Temperley-Lieb diagrams in this setting as these diagrams have no crossings, so there is no extra data.
\end{rem}

\begin{rem}
We note that given a braided diagram from Definition \ref{BRB-diag-defn} or Definition \ref{braid-diag-defn} we can recover the rook-Brauer diagrams, rook diagrams and Brauer diagrams from Subsection \ref{RB-diag-subsec} by projecting onto the plane.
\end{rem}

\subsection{Generalized rook-Brauer diagrams 3: braiding and equivariance}
\label{equiv-braid-diag-subsec}
We can combine the extra structure from Subsections \ref{equiv-diag-subsec} and \ref{braid-diag-subsec}.

\begin{defn}
\label{GBRB-diag-defn}
Let $G$ be a group. A \emph{$G$-braided rook-Brauer $n$-diagram} is a graph consisting of two columns of $n$ nodes such that
\begin{itemize}
\item each node is connected to at most one other node by a strand (as above, whenever a strand crosses another it is either an over-crossing or an under-crossing);
\item each strand is labelled by an element of the group $G$.
\end{itemize}
\end{defn}

\begin{defn}
\label{GB-diag-defn}
Let $G$ be a group. 
\begin{enumerate}
\item A \emph{$G$-braided rook $n$-diagram} is a $G$-braided rook-Brauer $n$-diagram having no left-to-left and no right-to-right connections.
\item A \emph{$G$-braided Brauer $n$-diagram} is a $G$-braided rook-Brauer $n$-diagram having no missing edges.
\end{enumerate}
\end{defn}

\begin{rem}
Suppose we are given a $G$-braided diagram as in Definition \ref{GBRB-diag-defn} or Definition \ref{GB-diag-defn}. We could project the diagram onto the plane, in which case we would recover the equivariant diagrams of Subsection \ref{equiv-diag-subsec}. On the other hand we could omit the labels, in which case we would recover the braided diagrams of Subsection \ref{braid-diag-subsec}.
\end{rem}

\subsection{Extensions of diagrams}
Given any sort of $n$-diagram from Subsections \ref{RB-diag-subsec}, \ref{equiv-diag-subsec}, \ref{braid-diag-subsec} and \ref{equiv-braid-diag-subsec}, we can extend it to an $(n+1)$-diagram by adding an extra vertex at the end of each column and joining them with a horizontal edge. In the equivariant cases, we take the label on our extra horizontal edge to be $1\in G$.

\begin{eg}
Consider the diagrams below.
\begin{center}
\begin{tikzpicture}
\node[circle,fill=black,inner sep=0pt,minimum size=3pt] (a) at (0,0) {};
\node[circle,fill=black,inner sep=0pt,minimum size=3pt] (a) at (0,1) {};
\node[circle,fill=black,inner sep=0pt,minimum size=3pt] (a) at (0,2) {};
\node[circle,fill=black,inner sep=0pt,minimum size=3pt] (a) at (0,3) {};
\node[circle,fill=black,inner sep=0pt,minimum size=3pt] (a) at (2,0) {};
\node[circle,fill=black,inner sep=0pt,minimum size=3pt] (a) at (2,1) {};
\node[circle,fill=black,inner sep=0pt,minimum size=3pt] (a) at (2,2) {};
\node[circle,fill=black,inner sep=0pt,minimum size=3pt] (a) at (2,3) {};
\draw (0,2) to[out=-10,in=-10] (0,3);
\draw (2,1) to[out=120,in=225] (2,3);
\draw (0,0) -- (2,2);
\node[circle,fill=black,inner sep=0pt,minimum size=3pt] (a) at (4,0) {};
\node[circle,fill=black,inner sep=0pt,minimum size=3pt] (a) at (4,1) {};
\node[circle,fill=black,inner sep=0pt,minimum size=3pt] (a) at (4,2) {};
\node[circle,fill=black,inner sep=0pt,minimum size=3pt] (a) at (4,3) {};
\node[circle,fill=black,inner sep=0pt,minimum size=3pt] (a) at (6,0) {};
\node[circle,fill=black,inner sep=0pt,minimum size=3pt] (a) at (6,1) {};
\node[circle,fill=black,inner sep=0pt,minimum size=3pt] (a) at (6,2) {};
\node[circle,fill=black,inner sep=0pt,minimum size=3pt] (a) at (6,3) {};
\draw (4,2) to[out=-10,in=-10] (4,3);
\draw (6,1) to[out=120,in=225] (6,3);
\draw (4,0) -- (6,2);
\node[circle,fill=black,inner sep=0pt,minimum size=3pt] (a) at (4,-1) {};
\node[circle,fill=black,inner sep=0pt,minimum size=3pt] (a) at (6,-1) {};
\draw (4,-1) -- (6,-1);
\end{tikzpicture}
\end{center}
On the left we have a rook-Brauer $4$-diagram and on the the right we have its extension to a rook-Brauer $5$-diagram. 
\end{eg}

\section{Composition of diagrams}
\label{composition-sec}
In this section we define the composition of diagrams without the presence of a ring. These compositions will underpin the multiplication in our generalized algebras in Section \ref{generalized-algebra-sec}. We choose to have a short section defining these compositions (rather than including them directly into the definitions of the algebras in Section \ref{generalized-algebra-sec}) for two reasons. The first is to do with restrictions on the group for equivariant versions of diagram algebras. For those algebras in which we can form loops by composition we must restrict to abelian groups. However, for rook algebras we may work with any group. We believe this is best explained by considering the composition of diagrams without the rest of the structure that we will have in our generalized diagram algebras. The second reason is one of convenience. We can define all of our generalized algebras and subalgebras using just three definitions of compositions.  

\subsection{Composition of rook-Brauer diagrams}
We start by recalling the definition of composition for rook-Brauer diagrams.
\begin{defn}
\label{rook-Brauer-comp-defn}
Given two  rook-Brauer $n$-diagrams, $x$ and $y$, the product $x\cdot y$ is the diagram obtained as follows:
\begin{enumerate}
\item concatenate $x$ and $y$ by identifying the right-hand nodes of $x$ with the left-hand nodes of $y$;
\item forget the vertices in the middle;
\item if we form a loop we delete it.
\end{enumerate}
By restriction, this also defines composition for rook diagrams, Brauer diagrams and Temperley-Lieb diagrams.
\end{defn}

\subsection{Composition of $G$-rook diagrams and $G$-braided rook diagrams}

\begin{defn}
\label{G-rook-alg-comp-defn}
Let $G$ be a group. Given two $G$-rook $n$-diagrams (resp. $G$-braided rook $n$-diagrams),  $x$ and $y$, the product $x\cdot y$ is the diagram obtained as follows:
\begin{enumerate}
\item concatenate $x$ and $y$ by identifying the right-hand nodes of $x$ with the left-hand nodes of $y$;
\item forget the vertices in the middle;
\item the label on an edge formed by concatenation is the product of the group elements, read from left to right.
\end{enumerate}
\end{defn}

\begin{eg}
The following diagram gives an example of composition of $G$-rook $3$-diagrams.
\begin{center}
\begin{tikzpicture}[
    tlabel/.style={pos=0.4,right=-1pt},
    baseline=(current bounding box.center)
    ]
\node[circle,fill=black,inner sep=0pt,minimum size=3pt] (a) at (0,0) {};
\node[circle,fill=black,inner sep=0pt,minimum size=3pt] (a) at (0,1) {};
\node[circle,fill=black,inner sep=0pt,minimum size=3pt] (a) at (0,2) {};
\node[circle,fill=black,inner sep=0pt,minimum size=3pt] (a) at (2,0) {};
\node[circle,fill=black,inner sep=0pt,minimum size=3pt] (a) at (2,1) {};
\node[circle,fill=black,inner sep=0pt,minimum size=3pt] (a) at (2,2) {};
\draw (0,2) edge["$a$"] (2,2);
\draw (0,0) edge["$c$"] (2,1);
\end{tikzpicture}
$\quad \cdot \quad$
\begin{tikzpicture}[
    tlabel/.style={pos=0.4,right=-1pt},
    baseline=(current bounding box.center)
    ]
\node[circle,fill=black,inner sep=0pt,minimum size=3pt] (a) at (4,0) {};
\node[circle,fill=black,inner sep=0pt,minimum size=3pt] (a) at (4,1) {};
\node[circle,fill=black,inner sep=0pt,minimum size=3pt] (a) at (4,2) {};
\node[circle,fill=black,inner sep=0pt,minimum size=3pt] (a) at (6,0) {};
\node[circle,fill=black,inner sep=0pt,minimum size=3pt] (a) at (6,1) {};
\node[circle,fill=black,inner sep=0pt,minimum size=3pt] (a) at (6,2) {};
\draw (4,2) edge["$b$"] (6,2);
\draw (4,1) edge["$d$"] (6,0);
\end{tikzpicture}
$\quad = \quad$
\begin{tikzpicture}[
    tlabel/.style={pos=0.4,right=-1pt},
    baseline=(current bounding box.center)
    ]
\node[circle,fill=black,inner sep=0pt,minimum size=3pt] (a) at (8,0) {};
\node[circle,fill=black,inner sep=0pt,minimum size=3pt] (a) at (8,1) {};
\node[circle,fill=black,inner sep=0pt,minimum size=3pt] (a) at (8,2) {};
\node[circle,fill=black,inner sep=0pt,minimum size=3pt] (a) at (10,0) {};
\node[circle,fill=black,inner sep=0pt,minimum size=3pt] (a) at (10,1) {};
\node[circle,fill=black,inner sep=0pt,minimum size=3pt] (a) at (10,2) {};
\draw (8,2) edge["$ab$"] (10,2);
\draw (8,0) edge["$cd$"] (10,0);
\end{tikzpicture}
\end{center}
\end{eg}

\subsection{Composition of generalized rook-Brauer diagrams}

\begin{defn}
\label{G-rook-brauer-comp-defn}
Let $G$ be an abelian group. Given two $G$-rook-Brauer $n$-diagrams (resp. braided rook-Brauer $n$-diagrams or $G$-braided rook-Brauer diagrams), $x$ and $y$, the product $x\cdot y$ is the diagram obtained as follows:
\begin{enumerate}
\item concatenate $x$ and $y$ by identifying the right-hand nodes of $x$ with the left-hand nodes of $y$;
\item forget the vertices in the middle;
\item if we have $G$-labels, the label on an edge formed by concatenation is the product of the group elements;
\item if we form a loop we delete it, irrespective of crossings or $G$-labels.
\end{enumerate}
By restriction, this also defines composition for braided rook algebras, generalized Brauer diagrams and generalized Temperley-Lieb diagrams.
\end{defn}

\begin{rem}
\label{abelian-rem}
The composition of $G$-rook diagrams and $G$-braided rook diagrams in Definition \ref{G-rook-alg-comp-defn} holds for any group $G$. However, when defining composition for $G$-rook-Brauer diagrams and $G$-braided rook-Brauer diagrams we require $G$ to be abelian. This is due to the fact that we can form loops when composing these diagrams. For example, consider the following three $G$-rook-Brauer $4$-diagrams. Note that, for simplicity, we will only put labels on the necessary edges.

\begin{center}
\begin{tikzpicture}
\node[circle,fill=black,inner sep=0pt,minimum size=3pt] (a) at (0,0) {};
\node[circle,fill=black,inner sep=0pt,minimum size=3pt] (a) at (0,1) {};
\node[circle,fill=black,inner sep=0pt,minimum size=3pt] (a) at (0,2) {};
\node[circle,fill=black,inner sep=0pt,minimum size=3pt] (a) at (0,3) {};
\node[circle,fill=black,inner sep=0pt,minimum size=3pt] (a) at (2,0) {};
\node[circle,fill=black,inner sep=0pt,minimum size=3pt] (a) at (2,1) {};
\node[circle,fill=black,inner sep=0pt,minimum size=3pt] (a) at (2,2) {};
\node[circle,fill=black,inner sep=0pt,minimum size=3pt] (a) at (2,3) {};
\node[circle,fill=black,inner sep=0pt,minimum size=3pt] (a) at (4,0) {};
\node[circle,fill=black,inner sep=0pt,minimum size=3pt] (a) at (4,1) {};
\node[circle,fill=black,inner sep=0pt,minimum size=3pt] (a) at (4,2) {};
\node[circle,fill=black,inner sep=0pt,minimum size=3pt] (a) at (4,3) {};
\node[circle,fill=black,inner sep=0pt,minimum size=3pt] (a) at (6,0) {};
\node[circle,fill=black,inner sep=0pt,minimum size=3pt] (a) at (6,1) {};
\node[circle,fill=black,inner sep=0pt,minimum size=3pt] (a) at (6,2) {};
\node[circle,fill=black,inner sep=0pt,minimum size=3pt] (a) at (6,3) {};
\draw (0,3) -- (2,3) -- (4,3) -- (6,3);
\draw (0,0) -- (2,0) -- (4,0) -- (6,0);
\draw (2,2) edge["$b$"] (4,2);
\draw (2,1) edge["$d$", swap] (4,1);
\draw (4,1) to[out=-10,in=-10, "$c$",swap] (4,2);
\draw (2,1) to[out=135,in=225, "$a$"] (2,2);
\draw (0,1) to[out=-10,in=-10] (0,2);
\draw (6,1) to[out=135,in=225] (6,2);
\end{tikzpicture}
\end{center} 
We want to be able to compose these diagrams and we want the composition to be associative. If we perform composition (in either order) in the diagram above, we will have a loop in the middle. When we come to study the algebras formed by these diagrams it will be important to keep track of the labels on this loop. If $G$ is abelian, we can simply take the product of the labels. If $G$ is not abelian then there is not a straightforward choice of labels on the composite of the left-hand two diagrams \emph{and} on the composite of the right-hand two diagrams that leads to an associative composition. 
\end{rem}

\section{Generalized rook-Brauer algebras}
\label{generalized-algebra-sec}

We begin by recalling the definition of the rook-Brauer algebras and their subalgebras. We then proceed to define the generalized rook-Brauer algebras and their subalgebras. For the two cases where we are building in equivariance, we begin by treating the generalized rook algebra case first since this is defined for any group $G$. On the other hand, when defining the generalized rook-Brauer algebras, Brauer algebras, Motzkin algebras and Temperley-Lieb algebras we wish to restrict to the case where $G$ is abelian. Loosely speaking, this is because we cannot form loops by composing rook diagrams but we can when composing our other types of diagram as discussed in Remark \ref{abelian-rem}. 

We begin by recalling a definition that we will require throughout this section and throughout the rest of the paper: the definition of a ring with an action of a group.

\begin{defn}
Let $G$ be a group. A \emph{commutative $G$-ring} is a unital, commutative ring $k$ together with a group homomorphism $\rho\colon G\rightarrow \mathrm{Aut}(k)$, where $\mathrm{Aut}(k)$ denotes the group of ring automorphisms of $k$. In what follows we will suppress the $\rho$ and write $g\lambda$ for $\rho(g)(\lambda)$.
\end{defn}

\subsection{Rook-Brauer algebras}
\label{RB-subsec}

We begin by recalling the definitions of rook-Brauer algebras, rook algebras, Brauer algebras and Temperley-Lieb algebras.

\begin{defn}
Let $k$ be a commutative ring. Let $n$ be a natural number. Let $\delta, \varepsilon \in k$. The \emph{rook-Brauer algebra}, written $\mathcal{RB}_n(\delta, \varepsilon)$ consists of formal $k$-linear combinations of rook-Brauer $n$-diagrams with the bilinear extension of the composition of Definition \ref{rook-Brauer-comp-defn} with the conditions:
\begin{enumerate}
\item when we remove a loop from the composite we multiply by a pre-factor of $\delta$;
\item replace each contractible component with a pre-factor of $\varepsilon$.
\end{enumerate}
\end{defn}

\begin{defn}
By restriction, we obtain four subalgebras: the \emph{rook algebra}, the \emph{Brauer algebra}, the \emph{Motzkin algebra} and the \emph{Temperley-Lieb algebra}. We denote these by $\mathcal{R}_n(\varepsilon)$, $\mathcal{B}_n(\delta)$, $\mathcal{M}_n\left(\delta, \varepsilon\right)$ and $\mathcal{TL}_n(\delta)$ respectively.
\end{defn}

\begin{rem}
As explained in \cite[Subsection 1.2]{Boyde}, we note that the composite of two rook diagrams can yield no loops since a rook diagram has no left-to-left connections and no right-to-right connections. Therefore the rook algebra is independent of $\delta$ and we drop it from the notation. Similarly, the composite of Brauer diagrams or Temperley-Lieb diagrams can have no contractible components since the diagrams have no missing edges. Therefore these algebras are independent of $\varepsilon$ and we drop it from the notation.
\end{rem}

\subsection{Equivariant rook algebras and equivariant braided rook algebras}
\label{ER-subsec}

\begin{defn}
\label{GR-defn}
Let $G$ be a group. Let $k$ be a commutative $G$-ring. Let $n$ be a natural number. Let $ \varepsilon \in k$. 
\begin{enumerate}
\item The \emph{$G$-rook algebra}, written $G\mathcal{R}_n(\varepsilon)$, consists of formal $k$-linear combinations of $G$-rook $n$-diagrams with the bilinear extension of the composition from Definition \ref{G-rook-alg-comp-defn} with the condition that we replace each contractible component with a pre-factor of $\varepsilon$.
\item The \emph{$G$-braided rook algebra}, written $GB\mathcal{R}_n(\varepsilon)$, consists of formal $k$-linear combinations of $G$-braided rook $n$-diagrams with the bilinear extension of the composition from Definition \ref{G-rook-alg-comp-defn} with the condition that we replace each contractible component with a pre-factor of $\varepsilon$.
\end{enumerate}
\end{defn}

\subsection{Equivariant rook-Brauer algebras}
\label{ERB-subsec}

\begin{defn}
\label{GRB-defn}
Let $G$ be an abelian group. Let $k$ be a commutative $G$-ring. Let $n$ be a natural number. Let $\delta, \varepsilon \in k$. The \emph{$G$-rook-Brauer algebra}, written $G\mathcal{RB}_n(\delta, \varepsilon)$, consists of formal $k$-linear combinations of  $G$-rook-Brauer $n$-diagrams with the bilinear extension of the composition from Definition \ref{G-rook-brauer-comp-defn} with the conditions:
\begin{enumerate}
\item if we form a loop with left-hand label $g$ and right-hand label $h$, we replace this loop with a pre-factor of $(gh)\delta$;
\item replace each contractible component with a pre-factor of $\varepsilon$.
\end{enumerate}
\end{defn}

\begin{defn}
By restriction, we obtain three subalgebras: the \emph{$G$-Brauer algebra}, the \emph{$G$-Motzkin algebra} and the \emph{$G$-Temperley-Lieb algebra}. We denote these by $G\mathcal{B}_n(\delta)$, $G\mathcal{M}_n\left(\delta , \varepsilon\right)$ and $G\mathcal{TL}_n(\delta)$ respectively.
\end{defn}

\subsection{Braided rook-Brauer algebras}
\label{BRB-subsec}

\begin{defn}
\label{BRB-defn}
Let $k$ be a unital commutative ring. Let $n$ be a natural number. Let $\delta, \varepsilon \in k$. The \emph{braided rook-Brauer algebra}, written $B\mathcal{RB}_n(\delta, \varepsilon)$, consists of formal $k$-linear combinations of braided rook-Brauer $n$-diagrams with the bilinear extension of the composition in Definition \ref{G-rook-brauer-comp-defn} with the following conditions:
\begin{enumerate}
\item replace any loops (irrespective of crossings) with a pre-factor of $\delta$;
\item replace each contractible component with a pre-factor of $\varepsilon$.
\end{enumerate}
\end{defn}

\begin{defn}
By restriction, we obtain two subalgebras: the \emph{braided rook algebra} and the \emph{braided Brauer algebra}. We denote these by $B\mathcal{R}_n(\varepsilon)$ and $B\mathcal{B}_n(\delta)$ respectively.
\end{defn}

\begin{rem}
Braided analogues of Brauer algebras exist in the literature in the form of Birman–Murakami–Wenzl algebras (also known as BMW algebras or Birman-Wenzl algebras, see \cite{M} and \cite{BW}) and Kauffman tangle algebras (see \cite{MT}). Our braided Brauer algebras differ from these algebras as we do not assume that the \emph{Kauffman skein relation} is satisfied.
\end{rem}

\subsection{Equivariant braided rook-Brauer algebras}
\label{EBRB-subsec}
We can combine the ideas from the previous two subsections to define equivariant braided rook-Brauer algebras.

\begin{defn}
\label{GBRB-defn}
Let $G$ be an abelian group. Let $k$ be a commutative $G$-ring. Let $n$ be a natural number. Let $\delta, \varepsilon \in k$. The \emph{$G$-braided rook-Brauer algebra}, written $GB\mathcal{RB}_n(\delta, \varepsilon)$ consists of formal $k$-linear combinations of  $G$-braided rook-Brauer $n$-diagrams with the bilinear extension of the composition of Definition \ref{G-rook-brauer-comp-defn} with the following conditions:
\begin{enumerate}
\item if we form a loop (irrespective of crossings) with left-hand label $g$ and right-hand label $h$, we replace this loop with a pre-factor of $(gh)\delta$;
\item replace each contractible component with a pre-factor of $\varepsilon$.
\end{enumerate}
\end{defn}

\begin{defn}
By restriction, we obtain a subalgebra: the \emph{$G$-braided Brauer algebra}. We denote this by $GB\mathcal{B}_n(\delta)$.
\end{defn}

\subsection{Trivial modules for generalized rook-Brauer algebras}

\begin{defn}
The \emph{trivial module}, $\mathbbm{1}$, for any algebra considered in Subsections \ref{RB-subsec}, \ref{ER-subsec}, \ref{ERB-subsec}, \ref{BRB-subsec} and \ref{EBRB-subsec} is a single copy of the ground ring $k$, where:
\begin{itemize}
\item diagrams with no missing edges, no left-to-left connections and no right-to-right connections act as multiplication by $1\in k$ and
\item all other diagrams act as $0\in k$.
\end{itemize}
\end{defn}

\section{Link states}
\label{link-state-sec}
In this section we define link states for generalized rook-Brauer algebras. Link states were first introduced by Ridout and Saint-Aubin \cite{RSA}. They in turn point out that the notion is related to \emph{parenthesis structures} in \cite{Kauffmann1}, \emph{arch configurations} in \cite{DGG} and cellular structure in \cite{GL-cellular}. Our notation will closely follow that of \cite{Boyde}.

\subsection{Link states for equivariant rook-Brauer diagrams}

\begin{defn}
Let $G$ be an abelian group. By slicing vertically down the middle of a $G$-rook-Brauer $n$-diagram, we obtain the \emph{left and right link states} of the diagram. A link state consists of $n$ nodes such that:
\begin{itemize}
\item each node may be connected to at most one other node by an edge;
\item a node not connected to any other may have a hanging edge, called a \emph{defect};
\item if we slice a left-to-right connection with label $g$ say, the defects in both the left and right link states inherit the label.
\end{itemize}
\end{defn}

\begin{defn}
Fix a natural number $n$. Let $GP_i$ be the set of link states of $G$-rook-Brauer $n$-diagrams with precisely $i$ defects.
\end{defn}

\begin{defn}
Given a link state in $GP_i$ we can perform the following three operations:
\begin{itemize}
\item we can connect two defects by adding a $G$-labelled edge between the ends of the hanging edges. We call this operation a \emph{$G$-splice};
\item we can delete a defect, leaving a missing edge. This is called a \emph{deletion};
\item we can multiply a label on a hanging edge by an element of $G$. This is called a \emph{label replacement}.
\end{itemize}
\end{defn}

\subsection{Link states for braided rook-Brauer diagrams}

\begin{defn}
Consider a braided rook-Brauer $n$-diagram, we obtain the \emph{left and right link states} of the diagram as follows:
\begin{itemize}
\item we split any left-to-right connections;
\item we split a left-to-left connection if and only if it is braided through a right-to-right connection;
\item we split a right-to-right connection if and only if it is braided through a left-to-left connection.
\end{itemize}
Therefore,
\begin{itemize}
\item each node may be connected to at most one other node by an edge and
\item a node not connected to any other may have a hanging edge, called a \emph{defect}.
\end{itemize}
\end{defn}

\begin{defn}
Fix a natural number $n$. Let $BP_i$ be the set of link states of braided rook-Brauer $n$-diagrams with precisely $i$ defects.
\end{defn}

\begin{defn}
Given a link state in $BP_i$ we can perform the following two operations:
\begin{itemize}
\item we can connect two defects by connecting the hanging edges. We call this operation a \emph{braided splice}.
\item we can delete a defect, leaving a missing edge. This is called a \emph{deletion}.
\end{itemize}
\end{defn}

\begin{rem}
When performing a splice for a braided diagram, we may have to braid hanging edges over or under other edges in the link state. We have a different splice for each choice of under-crossing or over-crossing we make.
\end{rem}

\subsection{Link states for $G$-braided rook-Brauer diagrams}

\begin{defn}
Let $G$ be an abelian group. Consider a $G$-braided rook-Brauer $n$-diagram. We obtain the \emph{left and right link states} of the diagram as follows:
\begin{itemize}
\item we split any left-to-right connections;
\item we split a left-to-left connection if and only if it is braided through a right-to-right connection;
\item we split a right-to-right connection if and only if it is braided through a left-to-left connection;
\item if we split a strand with label $g$ say, the hanging edges in both the left and right link states inherit the label.
\end{itemize}
Therefore,
\begin{itemize}
\item each node may be connected to at most one other node by an edge;
\item a node not connected to any other may have a hanging edge, called a \emph{defect}.
\end{itemize}
\end{defn}

\begin{defn}
Fix a natural number $n$. Let $GBP_i$ be the set of link states of $G$-braided rook-Brauer $n$-diagrams with precisely $i$ defects.
\end{defn}

\begin{defn}
Given a link state in $GBP_i$ we can perform the following three operations:
\begin{itemize}
\item we can connect two defects by adding a $G$-labelled strand between the ends of the hanging edges. We call this operation a \emph{braided $G$-splice}.
\item we can delete a defect, leaving a missing edge. This is called a \emph{deletion};
\item we can multiply a label on a hanging edge by an element of $G$. This is called a \emph{label replacement}.
\end{itemize}
\end{defn}

\subsection{Ideals for diagrams and link states}
\label{ideals-subsec}

\begin{defn}
Let $0\leqslant i \leqslant n$. The $k$-span of $n$-diagrams in a generalized rook-Brauer algebra having at most $i$ left-to-right connections is a two-sided ideal. We will denote this ideal by $I_i$. By convention, we will set $I_{-1}=0$.
\end{defn}

\begin{defn}
Let $G$ be an abelian group. We define $k$-modules relating to link states of generalized rook-Brauer algebras as follows.
\begin{itemize}
\item Consider a link state $p\in GP_i$. Let $GJ_p$ be the $k$-submodule of $G\mathcal{RB}_n\left(\delta , \varepsilon\right)$ with basis given by the diagrams having right link state obtained from $p$ by a (possibly empty) sequence of $G$-splices, deletions and label replacements.
\item Consider a link state $p \in BP_i$. Let $BJ_p$ be the $k$-submodule of $B\mathcal{RB}_n\left(\delta , \varepsilon\right)$ with basis given by the diagrams having right link state obtained from $p$ by a (possibly empty) sequence of braided splices and deletions.
\item Consider a link state $p \in GBP_i$. Let $GBJ_p$ be the $k$-submodule of $GB\mathcal{RB}_n\left(\delta , \varepsilon\right)$ with basis given by the diagrams having right link state obtained from $p$ by a (possibly empty) sequence of braided $G$-splices, deletions and label replacements.
\end{itemize}
\end{defn}

\begin{lem}
Let $G$ be an abelian group. 
\begin{enumerate}
\item Consider a link state $p\in GP_i$. Then $GJ_p$ is a left ideal of $G\mathcal{RB}_n\left(\delta , \varepsilon\right)$.
\item Consider a link state $p\in BP_i$. Then $BJ_p$ is a left ideal of $B\mathcal{RB}_n\left(\delta , \varepsilon\right)$.
\item Consider a link state $p\in GBP_i$. Then $GBJ_p$ is a left ideal of $GB\mathcal{RB}_n\left(\delta , \varepsilon\right)$.
\end{enumerate}
\end{lem}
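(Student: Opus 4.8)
The plan is to prove all three parts by a single strategy, reducing each to a statement about how left multiplication transforms right link states. I focus on part (1); parts (2) and (3) follow by the same argument once the appropriate notions of \emph{braided splice} and \emph{braided $G$-splice} are substituted for the \emph{$G$-splice} (and label replacements are simply dropped in part (2)).

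First I would reduce to basis diagrams. Since $GJ_p$ is by definition the $k$-span of certain $G$-rook-Brauer $n$-diagrams and multiplication is bilinear, it suffices to fix a basis diagram $d$ of $GJ_p$ and an arbitrary $G$-rook-Brauer $n$-diagram $y$, and to show $y\cdot d\in GJ_p$. By Definition \ref{GRB-defn}, the product $y\cdot d$ equals a single $G$-rook-Brauer $n$-diagram $e$ multiplied by a scalar in $k$ (a product of factors $(gh)\delta$ coming from deleted loops and factors $\varepsilon$ coming from contractible components). As $GJ_p$ is a $k$-submodule, it is enough to show $e\in GJ_p$; that is, that the right link state of $e$ is obtained from $p$ by a (possibly empty) sequence of $G$-splices, deletions and label replacements.

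The key step is the claim that the right link state of $e$ is obtained from the right link state of $d$ by a sequence of $G$-splices, deletions and label replacements. The right-hand nodes of $e$ are precisely those of $d$, so the right link state can only change through the way the left-to-right strands of $d$---equivalently, the defects of the right link state of $d$---are routed through $y$ after concatenation. I would argue by analysing the maximal paths of the concatenated diagram (before forgetting the middle vertices). Two defects of $d$ may be joined by a path running through $y$, producing a connected pair in the right link state: this is a $G$-splice, whose inherited label is the product of the $G$-labels encountered along the path, which is well defined precisely because $G$ is abelian, as in Remark \ref{abelian-rem}. A defect may instead be routed to a middle vertex whose $y$-side carries no edge, leaving a missing edge: this is a deletion. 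Finally, a defect may pass through $y$ and emerge at a left-hand node of $e$, multiplying its label along the way: this is a label replacement. Hence the right link state of $e$ arises from that of $d$ by a sequence $\tau$ of allowed operations.

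Concatenating sequences then finishes the proof: by hypothesis the right link state of $d$ is obtained from $p$ by a sequence $\sigma$ of $G$-splices, deletions and label replacements, so the right link state of $e$ is obtained from $p$ by $\sigma$ followed by $\tau$. Therefore $e\in GJ_p$, whence $y\cdot d\in GJ_p$, proving (1). For (2) and (3) the identical argument applies, using the splitting rules defining $BP_i$ and $GBP_i$; here the case analysis must additionally respect the convention that a left-to-left connection is split if and only if it is braided through a right-to-right connection (and vice versa). I expect the main obstacle to be exactly this case analysis in the braided settings: verifying that routing the defects of $d$ through the crossings of $y$ yields only braided splices and deletions (together with label replacements in the $G$-braided case), and that the crossing and label data combine consistently, invoking abelianness of $G$ wherever a loop is formed.
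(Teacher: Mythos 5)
Your proof is correct and takes essentially the same approach as the paper: the paper's entire proof consists of the single observation that the right link state of $x\cdot y$ is obtained from the right link state of $y$ by a sequence of $G$-splices, deletions and label replacements (with the braided cases handled similarly). Your reduction to basis diagrams and the path-tracing case analysis simply make explicit the details that the paper leaves to the reader.
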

\begin{proof}
Given two $G$-rook-Brauer diagrams, $x$ and $y$, the right link state of $x\cdot y$ is obtained from the right link state of $y$ by a sequence of $G$-splices, deletions and label replacements. The remaining two statements are proved similarly.
\end{proof}

\section{Technical results}
\label{technical-results-sec}
In this section we prove the necessary generalizations of \cite[Theorem 1.7]{Boyde} and \cite[Lemma 6.1]{Boyde}. These will allow us to prove results about generalized rook-Brauer algebras, Brauer algebras and Temperley-Lieb diagrams.

\subsection{Equivariant case}

\begin{prop}
\label{equiv-tech-prop}
Let $G$ be an abelian group. Let $0\leqslant l \leqslant m \leqslant n-1$. Let $A$ be a subalgebra of $G\mathcal{RB}(\delta , \varepsilon)$, such that 
\begin{itemize}
\item as a $k$-module, $A$ is free on a subset of the $G$-rook-Brauer $n$-diagrams, and
\item for $i$ in the range $l\leqslant i \leqslant m$, for each link state $p\in GP_i$, if $A$ contains at least one diagram with right link state $p$, then $A$ contains an idempotent $e_p$ such that in $A$ we have an equality of left ideals $A\cdot e_p = A\cap GJ_p$.
\end{itemize}
Then we have a chain of isomorphisms 
\[\mathrm{Tor}_{\star}^{A/\left(A\cap I_{l-1}\right)}\left(\mathbbm{1} , \mathbbm{1}\right) \cong \mathrm{Tor}_{\star}^{A/\left(A\cap I_{l}\right)}\left(\mathbbm{1} , \mathbbm{1}\right)\cong \cdots \cong \mathrm{Tor}_{\star}^{A/\left(A\cap I_{m}\right)}\left(\mathbbm{1} , \mathbbm{1}\right) .\]
\end{prop}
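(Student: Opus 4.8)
The plan is to factor the tower of quotient maps
\[A/(A\cap I_{l-1}) \twoheadrightarrow A/(A\cap I_{l}) \twoheadrightarrow \cdots \twoheadrightarrow A/(A\cap I_{m})\]
into its consecutive steps and to show that each one induces an isomorphism on $\mathrm{Tor}_{\star}(\mathbbm{1},\mathbbm{1})$; composing these gives the asserted chain. So fix $i$ with $l\leqslant i \leqslant m$, write $R=A/(A\cap I_{i-1})$ and $S=A/(A\cap I_{i})$, and let $J=(A\cap I_{i})/(A\cap I_{i-1})$ be the kernel of $R\twoheadrightarrow S$, a two-sided ideal of $R$ with $R/J=S$. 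Because every diagram in $J$ has at most $i\leqslant n-1$ left-to-right connections, it acts as $0$ on $\mathbbm{1}$; hence $\mathbbm{1}$ is an $S$-module and both $\mathrm{Tor}$ groups we wish to compare are defined.

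Next I would pin down the left $R$-module structure of $J$. Since $A$ is $k$-free on a set of diagrams, $J$ has a $k$-basis given by the images of the diagrams of $A$ with \emph{exactly} $i$ left-to-right connections (those with fewer are already zero in $R$); each such diagram has a right link state in $GP_i$, determined up to label replacement on its $i$ defects. Grouping the basis by these classes $[p]$ and invoking the hypothesis $A\cdot e_p=A\cap GJ_p$, the image of $A\cap GJ_p$ in $R$ is precisely $R\bar e_p$, whose surviving basis elements are exactly the diagrams whose right link state lies in $[p]$, since $G$-splices and deletions strictly lower the defect count and so die in $R$. These pieces are disjoint and exhaust $J$, giving a decomposition of left $R$-modules $J=\bigoplus_{[p]}R\,\bar e_p$. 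Each $R\bar e_p$ is a direct summand of $R$ because $\bar e_p$ is idempotent, so $J$ is projective as a left $R$-module. Moreover $\mathbbm{1}\otimes_R R\bar e_p\cong \mathbbm{1}\bar e_p=0$, as $e_p$ acts as $0$ on $\mathbbm{1}$, whence $\mathbbm{1}\otimes_R J=0$.

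With these two facts the homological step is routine. Deriving $\mathbbm{1}\otimes_R(-)$ on the short exact sequence $0\to J\to R\to S\to 0$ of left $R$-modules, left-projectivity of $J$ gives $\mathrm{Tor}^R_q(\mathbbm{1},J)=0$ for $q>0$, and $\mathbbm{1}\otimes_R J=0$ kills the bottom term, so the long exact sequence yields
\[\mathrm{Tor}^R_q(\mathbbm{1},S)\cong\begin{cases}\mathbbm{1} & q=0,\\ 0 & q>0;\end{cases}\]
equivalently $\mathbbm{1}\otimes^{\mathbb{L}}_R S\simeq \mathbbm{1}$ as a complex of right $S$-modules. Associativity of the derived tensor product then gives
\[\mathbbm{1}\otimes^{\mathbb{L}}_S\mathbbm{1}\simeq(\mathbbm{1}\otimes^{\mathbb{L}}_R S)\otimes^{\mathbb{L}}_S\mathbbm{1}\simeq \mathbbm{1}\otimes^{\mathbb{L}}_R(S\otimes^{\mathbb{L}}_S\mathbbm{1})\simeq \mathbbm{1}\otimes^{\mathbb{L}}_R\mathbbm{1},\]
and taking homology produces the isomorphism $\mathrm{Tor}^S_{\star}(\mathbbm{1},\mathbbm{1})\cong\mathrm{Tor}^R_{\star}(\mathbbm{1},\mathbbm{1})$ induced by $R\twoheadrightarrow S$. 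One may equally run the Cartan--Eilenberg change-of-rings spectral sequence, which degenerates for the same reason. This abstract step is the generalization of \cite[Lemma 6.1]{Boyde} promised by the section.

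I expect the main obstacle to lie in the combinatorial bookkeeping of the second paragraph rather than in the homological algebra of the third. Concretely, one must check carefully that, in the $G$-equivariant setting, grouping diagrams by their right link state up to label replacement is compatible with the hypothesis $A\cdot e_p=A\cap GJ_p$, so that the $R\bar e_p$ genuinely form an \emph{internal} direct sum equal to $J$; this is where freeness of $A$ over $k$ on a set of diagrams and the precise closure of $GJ_p$ under $G$-splices, deletions and label replacements are used. One also has to confirm that the idempotents are available exactly for $l\leqslant i\leqslant m$, which is what restricts the chain of isomorphisms to run from $A/(A\cap I_{l-1})$ to $A/(A\cap I_{m})$. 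The braided and braided-equivariant analogues, built from $BP_i,BJ_p$ and $GBP_i,GBJ_p$, should follow by the identical argument, replacing splices by braided splices throughout.
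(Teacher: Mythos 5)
Your proposal is correct and is essentially the argument the paper invokes: the paper's proof defers entirely to the method of \cite[Theorem 1.7]{Boyde} (Section 5 of that paper), ``with $G$-splices instead of splices,'' and your reconstruction --- factoring into consecutive quotients, identifying each kernel as a direct sum of idempotent-generated left ideals $R\bar e_p$ indexed by link-state classes with disjoint diagram bases, and then running the projective-kernel/change-of-rings argument --- is exactly that method. Your grouping of link states up to label replacement is precisely the equivariant bookkeeping the paper's one-line proof alludes to, so there is no substantive divergence.
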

\begin{proof}
The proof follows the same method as the proof of \cite[Theorem 1.7]{Boyde} given in Section 5 of that paper. The only change required is that we use $G$-splices instead of splices.
\end{proof}

\begin{defn}
\label{fp-diag-defn}
Let $p\in GP_i$ be a link state with no missing edges. Let $f_p$ be the diagram obtained as follows:
\begin{enumerate}
\item replace the labels on all defects by the identity element in $G$;
\item complete to the diagram such that the underlying, unlabelled diagram of both the left and right link states is the underlying, unlabelled diagram of $p$;
\item for each right-to-right connection, if the label on the edge is $g \in G$, we set the label on the corresponding left-to-left connection by $g^{-1}$.
\end{enumerate} 
\end{defn}

\begin{lem}
\label{equiv-tech-lem}
Let $y$ be a diagram in $G\mathcal{B}_n(\delta) \cap GJ_p$. Then $y\cdot f_p = \delta^{\frac{1}{2}(n-i)}y$.
\end{lem}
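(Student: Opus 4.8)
The plan is to compute the product $y\cdot f_p$ directly from the composition rule of Definition \ref{G-rook-brauer-comp-defn} together with the loop prefactor of Definition \ref{GRB-defn}, tracking both the underlying diagram and the group labels. First I would unpack the two inputs. By Definition \ref{fp-diag-defn}, $f_p$ is the $G$-Brauer diagram whose $i$ through-strands sit at the defect positions of $p$ and carry the label $1\in G$, whose right-to-right connections reproduce the $\frac{1}{2}(n-i)$ cups of $p$ with their labels, and whose left-to-left connections are the mirror cups carrying the inverse labels. On the other side, since $y\in G\mathcal{B}_n(\delta)$ has no missing edges, its right link state is obtained from $p$ using only $G$-splices and label replacements (no deletions, since a deletion would leave a missing edge that cannot be filled back in). The crucial observation here is that a label replacement alters only the label on a hanging edge, i.e.\ a defect, so the cups of $y$ lying at the cup positions of $p$ carry \emph{exactly} the labels of $p$, while any new cups produced by $G$-splices sit at defect positions of $p$.

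Next I would read off the composite geometrically. Concatenating $y$ with $f_p$ identifies the right nodes of $y$ with the left nodes of $f_p$. Each of the $\frac{1}{2}(n-i)$ cups of $p$ appearing as a right-to-right connection of $y$ closes against the corresponding left-to-left connection of $f_p$ to form a loop, and these are the only loops. Meanwhile the $i$ through-strands of $f_p$, sitting precisely at the defect positions of $p$ and labelled by the identity, carry the through-strands and the spliced cups of $y$ straight across, while the left-to-left connections of $y$ remain untouched on the far left. One then checks that the resulting underlying diagram is exactly that of $y$, with the correct labels restored after multiplying by the identity labels on the through-strands of $f_p$. Since both $y$ and $f_p$ have no missing edges, no contractible components arise, so there is no $\varepsilon$-prefactor.

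The heart of the argument, and the step I expect to be the main obstacle, is the label bookkeeping on the loops. For a loop coming from a cup of $p$ at positions $(a,b)$, the left-hand label is the label $g_{ab}$ of that cup in $y$ --- which equals the label of the corresponding cup of $p$ by the observation above --- and the right-hand label is the label $g_{ab}^{-1}$ placed on the mirror connection of $f_p$ by Definition \ref{fp-diag-defn}(3). The loop prefactor from Definition \ref{GRB-defn} is therefore $(g_{ab}\,g_{ab}^{-1})\delta = \delta$, with the group element acting trivially on $\delta$. Taking the product over all $\frac{1}{2}(n-i)$ loops yields the scalar $\delta^{\frac{1}{2}(n-i)}$, and hence $y\cdot f_p = \delta^{\frac{1}{2}(n-i)}y$. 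The reliance on $G$ being abelian enters only in guaranteeing that this loop-deletion prefactor is well defined, as discussed in Remark \ref{abelian-rem}; the essential input is the cancellation $g_{ab}g_{ab}^{-1}=1$ arranged by the inverse-labelling in the construction of $f_p$, combined with the fact that label replacements never disturb the labels on the cups of $p$.
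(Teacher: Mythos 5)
Your proof is correct and is essentially the paper's argument written out in full: the paper's proof simply cites \cite[Lemma 6.1]{Boyde} for the geometric fact that the cups of $y$ close against the mirrored left-to-left connections of $f_p$ to produce $\frac{1}{2}(n-i)$ loops while the identity-labelled through-strands carry everything else across, and then notes---exactly as you verify explicitly---that the inverse-labelling convention in Definition \ref{fp-diag-defn} forces every loop label to be the identity, so each loop contributes precisely $\delta$. Your observation that $G$-splices and label replacements never disturb the labels on the cups of $p$ (so the cups of $y$ match those of $f_p$'s left-to-left connections exactly) is the content hidden in the paper's phrase ``our labelling convention ensures that the label on any loops is the identity,'' so the two arguments coincide.
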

\begin{proof}
The proof follows the same method as \cite[Lemma 6.1]{Boyde}, with $f_p$ playing the role of $d_p$. We note that our labelling convention in forming the diagram $f_p$ ensures that the label on any loops formed in the composite $y\cdot f_p$ is the identity element in $G$.
\end{proof}

\subsection{Braided case}

\begin{prop}
\label{braided-tech-prop}
Let $0\leqslant l \leqslant m \leqslant n-1$. Let $A$ be a subalgebra of $B\mathcal{RB}(\delta , \varepsilon)$, such that 
\begin{itemize}
\item as a $k$-module, $A$ is free on a subset of the braided rook-Brauer $n$-diagrams, and
\item for $i$ in the range $l\leqslant i \leqslant m$, for each link state $p\in BP_i$, if $A$ contains at least one diagram with right link state $p$, then $A$ contains an idempotent $e_p$ such that in $A$ we have an equality of left ideals $A\cdot e_p = A\cap BJ_p$.
\end{itemize}
Then we have a chain of isomorphisms 
\[\mathrm{Tor}_{\star}^{A/\left(A\cap I_{l-1}\right)}\left(\mathbbm{1} , \mathbbm{1}\right) \cong \mathrm{Tor}_{\star}^{A/\left(A\cap I_{l}\right)}\left(\mathbbm{1} , \mathbbm{1}\right)\cong \cdots \cong \mathrm{Tor}_{\star}^{A/\left(A\cap I_{m}\right)}\left(\mathbbm{1} , \mathbbm{1}\right) .\]
\end{prop}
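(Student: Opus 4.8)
The plan is to mirror the strategy used for the equivariant case in Proposition \ref{equiv-tech-prop}, which in turn follows Boyde's proof of \cite[Theorem 1.7]{Boyde}, with the braided splices of Section \ref{link-state-sec} playing the role that ordinary splices play there. First I would reduce the chain of isomorphisms to a single step: it suffices to prove, for each $i$ with $l \leqslant i \leqslant m$, that the quotient map $A/(A\cap I_{i-1}) \to A/(A\cap I_{i})$ induces an isomorphism on $\mathrm{Tor}_{\star}(\mathbbm{1} , \mathbbm{1})$, and then to compose these isomorphisms across the range. Writing $B = A/(A\cap I_{i-1})$, the kernel of this map is the subquotient ideal $\bar{J} = (A\cap I_{i})/(A\cap I_{i-1})$ sitting inside $B$, so the entire argument localizes to this ideal.

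Next I would establish the key structural decomposition. The subquotient $\bar{J}$ has a $k$-basis given by the images of those diagrams of $A$ having exactly $i$ left-to-right connections, and sorting these diagrams by their right link state partitions the basis according to the braided link states $p\in BP_i$ for which $A$ contains a diagram with right link state $p$. By the hypothesis on $A$, for each such $p$ the idempotent $e_p$ satisfies $A\cdot e_p = A\cap BJ_p$, so the corresponding summand of $\bar{J}$ is the image of the projective left ideal $A e_p$. Thus $\bar{J}$ is, as a left $B$-module, a direct sum $\bigoplus_p B\bar{e}_p$ of projective modules cut out by idempotents $\bar e_p$. Moreover, since $i \leqslant n-1$, every diagram in $A\cap I_i$ acts as $0$ on $\mathbbm{1}$ (a diagram acts nontrivially only when it has the maximal number $n$ of left-to-right connections), so $\mathbbm{1}$ is genuinely a $B/\bar{J}$-module. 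This is exactly the configuration for the standard comparison attached to a projective idempotent ideal: because $\bar{J}$ is left-projective and $\mathbbm{1}\bar e_p = 0$ for every $p$, one has $\mathrm{Tor}_{\star}^{B}(\mathbbm{1},\bar{J}) = 0$, and the resulting change-of-rings argument yields $\mathrm{Tor}_{\star}^{B}(\mathbbm{1},\mathbbm{1}) \cong \mathrm{Tor}_{\star}^{B/\bar{J}}(\mathbbm{1},\mathbbm{1})$. I would invoke this to obtain the single-step isomorphism, and hence the full chain.

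The braided-specific content, and the step I expect to be the main obstacle, is verifying that this decomposition really goes through with braided splices in place of splices. Concretely, one must confirm the braided analogue of the left-ideal lemma of Subsection \ref{ideals-subsec}: that the right link state of a product $x\cdot y$ of braided rook-Brauer diagrams is always obtained from the right link state of $y$ by a sequence of braided splices and deletions, so that $A e_p = A\cap BJ_p$ is stable and the basis of $\bar{J}$ is indeed partitioned by $BP_i$. The subtlety is purely combinatorial: a braided splice carries the extra datum of an over- or under-crossing, and distinct crossing choices produce distinct link states, so $BP_i$ is strictly larger than its planar counterpart. I would therefore check carefully that, after concatenation and loop-deletion, each product still resolves to a single well-defined braided link state, so that the indexing by $BP_i$ and the idempotent identification remain consistent. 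Once this bookkeeping is confirmed, the homological algebra is formally identical to Boyde's and to the equivariant case, since none of it sees the crossing data beyond the fact that it refines the set of link states.
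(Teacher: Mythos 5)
Your proposal is correct and takes essentially the same route as the paper: the paper's proof simply defers to Boyde's proof of \cite[Theorem 1.7]{Boyde} with braided splices substituted for splices, and your argument is an accurate expansion of exactly that method (one-step reduction along the filtration by the ideals $I_i$, decomposition of each subquotient $\bar{J}=(A\cap I_i)/(A\cap I_{i-1})$ as a direct sum of projective left ideals $B\bar{e}_p$ indexed by link states $p\in BP_i$, vanishing of $\mathrm{Tor}_{\star}^{B}(\mathbbm{1},\bar{J})$, and the change-of-rings comparison). The braided-specific point you flag---that the right link state of a product $x\cdot y$ is obtained from that of $y$ by braided splices and deletions---is precisely the content of the left-ideal lemma in Subsection \ref{ideals-subsec}, so no further verification is needed.
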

\begin{proof}
The proof follows the same method as the proof of \cite[Theorem 1.7]{Boyde} given in Section 5 of that paper. The only change required is that we use braided splices instead of splices.
\end{proof}

\begin{defn}
\label{dp-diag-defn}
Let $p\in BP_i$ be a link state with no missing edges. Let $d_p$ be the diagram such that the left and right link states are both $p$ and all of whose left-to-right connections are horizontal edges. 
\end{defn}

\begin{lem}
\label{braided-tech-lem}
Let $y$ be a diagram in $B\mathcal{B}_n(\delta) \cap BJ_p$. Then $y\cdot d_p = \delta^{\frac{1}{2}(n-i)}y$.
\end{lem}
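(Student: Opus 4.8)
The plan is to mirror the argument for Lemma \ref{equiv-tech-lem}, which in turn follows \cite[Lemma 6.1]{Boyde}, but now in the braided setting where there are no $G$-labels to track and the diagram $d_p$ from Definition \ref{dp-diag-defn} replaces $f_p$. The key structural fact is that a diagram $y \in B\mathcal{B}_n(\delta) \cap BJ_p$ has right link state obtainable from $p$ by braided splices and deletions; since $y$ is a Brauer diagram (no missing edges), no deletions occur, so the right link state of $y$ agrees with $p$ up to the braiding data recorded by the splices. This means the right-hand portion of $y$ matches, strand-for-strand, the left-hand portion of $d_p$ (which is exactly $p$), so that concatenating $y$ with $d_p$ pairs up every strand.

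First I would set up the count. Since $y$ is a braided Brauer $n$-diagram, it has no missing edges, so it has exactly $i$ left-to-right connections (the defects of $p$) and $\tfrac{1}{2}(n-i)$ right-to-right connections, paired against the $\tfrac{1}{2}(n-i)$ left-to-left connections of $d_p$. When we concatenate $y$ and $d_p$ and forget the middle vertices, each right-to-right connection of $y$ meets a corresponding left-to-left connection of $d_p$ to close up a loop. I would argue that there are exactly $\tfrac{1}{2}(n-i)$ such loops, one per paired connection: because the right link state of $y$ is $p$ (up to braiding) and the left link state of $d_p$ is also $p$, the nesting patterns match and each right-to-right arc of $y$ is capped off by precisely the left-to-left arc of $d_p$ sitting on the same pair of nodes.

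Next I would identify what survives. After deleting the $\tfrac{1}{2}(n-i)$ loops, the $i$ left-to-right strands of $y$ pass straight through the horizontal left-to-right edges of $d_p$ (these horizontal edges are the defining feature of $d_p$), so they are unchanged; likewise the left-to-left connections of $y$ are untouched. Hence the underlying diagram of $y \cdot d_p$ equals that of $y$. By the definition of $B\mathcal{B}_n(\delta)$ (Definition \ref{BRB-defn}), each removed loop contributes a pre-factor of $\delta$ irrespective of its crossings, giving a total factor $\delta^{\frac{1}{2}(n-i)}$, and therefore $y \cdot d_p = \delta^{\frac{1}{2}(n-i)} y$.

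The main obstacle is verifying that the concatenation really produces $\tfrac{1}{2}(n-i)$ genuine loops and no unexpected reconnections — that is, that the braiding does not cause a right-to-right arc of $y$ to link up with the wrong strand of $d_p$, or to merge several arcs into a single longer loop. This is precisely where the hypothesis $y \in BJ_p$ does the work: it forces the right link state of $y$ to be $p$ up to braided splices, so the pairing of right-to-right arcs of $y$ with left-to-left arcs of $d_p$ is governed by the common combinatorial data of $p$. Because $d_p$ has all left-to-right connections horizontal and both its link states equal to $p$, there is no interference between the through-strands and the arcs, and the loop count is exactly the number of capped pairs. Once this matching is established, the factor-of-$\delta$ bookkeeping is routine, exactly as in Lemma \ref{equiv-tech-lem} but without the label computation, since in $B\mathcal{B}_n(\delta)$ loops are removed with pre-factor $\delta$ regardless of crossings.
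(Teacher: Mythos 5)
Your overall strategy---cap the arcs of $p$ with their mirror arcs in $d_p$, let the through-strands pass along the horizontal edges, and invoke the ``loops deleted irrespective of crossings'' clause of Definition \ref{BRB-defn}---is exactly the method the paper intends (its proof simply defers to \cite[Lemma 6.1]{Boyde}). However, there is a genuine gap in your set-up: you treat every diagram $y \in B\mathcal{B}_n(\delta)\cap BJ_p$ as if its right link state were literally $p$, asserting that $y$ has exactly $i$ left-to-right connections and $\tfrac{1}{2}(n-i)$ right-to-right connections, each of which closes up against a left-to-left arc of $d_p$. By definition, $BJ_p$ is spanned by diagrams whose right link state is obtained from $p$ by a \emph{possibly nonempty} sequence of braided splices; you acknowledge this but then describe splices as mere ``braiding data,'' when in fact a splice genuinely joins two defects of $p$ into a new arc. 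For a spliced $y$ your counts fail: take $n=2$, let $p$ be the link state with two defects (so $i=2$, and $d_p$ is the identity diagram), and let $y$ be the cup--cap diagram with one left-to-left and one right-to-right connection. Then $y\in B\mathcal{B}_2(\delta)\cap BJ_p$, but $y$ has no left-to-right connections, and $d_p$ has no left-to-left arc for $y$'s right-to-right connection to close against. The lemma must cover such $y$, because in Theorem \ref{braided-brauer-thm} (following Boyde's Theorem 6.5) it is used to show that $d_p$ generates \emph{all} of $B\mathcal{B}_n(\delta)\cap BJ_p$ as a left ideal.

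The repair stays within your own framework. If the right link state of $y$ is $p$ with $s$ splices applied, then $y$ has $i-2s$ left-to-right connections and $\tfrac{1}{2}(n-i)+s$ right-to-right connections. Exactly those right-to-right connections of $y$ sitting over the arcs of $p$ (there are $\tfrac{1}{2}(n-i)$ of them) close up against the left-to-left arcs of $d_p$ to form loops; the $s$ splice arcs, like the through-strands, run along the horizontal edges of $d_p$ at the defect positions of $p$ and survive unchanged; and the right-to-right arcs of $d_p$ then restore precisely the arcs of $p$ that were consumed as loops. The exponent $\tfrac{1}{2}(n-i)$ is therefore the number of arcs of $p$, not the number of right-to-right connections of $y$, and with that correction the composite diagram equals $y$ for every diagram $y$ in the ideal, giving $y\cdot d_p = \delta^{\frac{1}{2}(n-i)}y$ as claimed.
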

\begin{proof}
The proof follows the same method as \cite[Lemma 6.1]{Boyde}.
\end{proof}

\subsection{$G$-braided case}

\begin{prop}
\label{G-braided-tech-prop}
Let $G$ be an abelian group. Let $0\leqslant l \leqslant m \leqslant n-1$. Let $A$ be a subalgebra of $GB\mathcal{RB}(\delta , \varepsilon)$, such that 
\begin{itemize}
\item as a $k$-module, $A$ is free on a subset of the braided rook-Brauer $n$-diagrams, and
\item for $i$ in the range $l\leqslant i \leqslant m$, for each link state $p\in GBP_i$, if $A$ contains at least one diagram with right link state $p$, then $A$ contains an idempotent $e_p$ such that in $A$ we have an equality of left ideals $A\cdot e_p = A\cap GBJ_p$.
\end{itemize}
Then we have a chain of isomorphisms 
\[\mathrm{Tor}_{\star}^{A/\left(A\cap I_{l-1}\right)}\left(\mathbbm{1} , \mathbbm{1}\right) \cong \mathrm{Tor}_{\star}^{A/\left(A\cap I_{l}\right)}\left(\mathbbm{1} , \mathbbm{1}\right)\cong \cdots \cong \mathrm{Tor}_{\star}^{A/\left(A\cap I_{m}\right)}\left(\mathbbm{1} , \mathbbm{1}\right) .\]
\end{prop}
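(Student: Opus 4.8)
The plan is to follow exactly the template established by the two preceding propositions, namely Proposition \ref{equiv-tech-prop} and Proposition \ref{braided-tech-prop}, which are themselves generalizations of \cite[Theorem 1.7]{Boyde}. The statement to be proved is the $G$-braided analogue of those results, and the structure of Boyde's original argument is entirely formal: it depends only on the existence of a filtration by the ideals $I_i$, on the decomposition of the ideal-quotient pieces in terms of the link-state ideals $GBJ_p$, and on the idempotent hypothesis $A\cdot e_p = A\cap GBJ_p$. None of these ingredients is sensitive to the particular flavour of diagram (equivariant, braided, or both); the only place where the diagrammatic combinatorics enters is in the splice operation used to show that the relevant modules are free and to build the contracting homotopies.

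Concretely, I would first recall from Section 5 of \cite{Boyde} the skeleton of the argument: for each $i$ in the range $l\leqslant i\leqslant m$ one compares the two algebras $A/(A\cap I_{i-1})$ and $A/(A\cap I_i)$, observes that the kernel of the projection between them is spanned by diagrams with exactly $i$ left-to-right connections, and decomposes this kernel as a direct sum over link states $p\in GBP_i$ of the pieces coming from $GBJ_p$. The idempotent hypothesis then identifies each such piece with a free module over the smaller algebra, so that the short exact sequence of algebras induces, via a change-of-rings or base-change spectral-sequence argument, an isomorphism on $\mathrm{Tor}_{\star}(\mathbbm{1},\mathbbm{1})$. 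Composing these isomorphisms across the range $l\leqslant i\leqslant m$ yields the asserted chain.

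The single modification needed, exactly as flagged in the proofs of the two earlier propositions, is to replace the splice operation throughout by the braided $G$-splice: wherever Boyde's argument connects two defects or manipulates a link state, one uses the operation appropriate to $GBP_i$ (braided $G$-splices, deletions and label replacements) in place of the ordinary splice. Because the left ideals $GBJ_p$ were defined precisely so that multiplication on the left moves right link states through sequences of braided $G$-splices, deletions and label replacements (this is the content of the lemma in Subsection \ref{ideals-subsec}), every step of the bookkeeping goes through verbatim with the braided $G$-labelled vocabulary substituted for the plain one.

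I do not expect any genuine obstacle here, since the proof is designed to be a transcription. The only point requiring minor care is to confirm that the combination of braiding data and $G$-labels does not interfere with the freeness of the relevant modules over the ground ring $k$ — but this is guaranteed by the hypothesis that $A$ is free as a $k$-module on a subset of the $G$-braided rook-Brauer $n$-diagrams, together with the fact that the $GBJ_p$ are by construction spanned by honest basis diagrams. Accordingly the proof reduces to the single sentence: the argument follows the method of the proof of \cite[Theorem 1.7]{Boyde} as given in Section 5 of that paper, using braided $G$-splices in place of splices.
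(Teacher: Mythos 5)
Your proposal matches the paper's proof exactly: the paper likewise just cites the argument of \cite[Theorem 1.7]{Boyde} from Section 5 of that paper, with the sole modification of using braided $G$-splices in place of splices. Your additional exposition of the skeleton of Boyde's argument (filtration by the ideals $I_i$, decomposition over link states $p\in GBP_i$, and the role of the idempotent hypothesis $A\cdot e_p = A\cap GBJ_p$) is accurate and consistent with how the paper's Subsection \ref{ideals-subsec} sets things up.
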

\begin{proof}
The proof follows the same method as the proof of \cite[Theorem 1.7]{Boyde} given in Section 5 of that paper. The only change required is that we use braided $G$-splices instead of splices.
\end{proof}

\begin{defn}
\label{hp-diag-defn}
Let $p\in GBP_i$ be a link state with no missing edges. Let $h_p$ be the diagram obtained as follows:
\begin{enumerate}
\item replace the labels on all defects by the identity element in $G$;
\item complete to the diagram such that the underlying, unlabelled diagram of both the left and right link states is the underlying, unlabelled diagram of $p$;
\item for each right-to-right connection, if the label on the edge is $g \in G$, we set the label on the corresponding left-to-left connection by $g^{-1}$.
\end{enumerate} 
\end{defn}

\begin{lem}
\label{G-braided-tech-lem}
Let $y$ be a diagram in $GB\mathcal{B}_n(\delta) \cap BGJ_p$. Then $y\cdot h_p = \delta^{\frac{1}{2}(n-i)}y$.
\end{lem}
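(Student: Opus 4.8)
The plan is to follow the template of \cite[Lemma 6.1]{Boyde}, exactly as in the proofs of Lemmas \ref{equiv-tech-lem} and \ref{braided-tech-lem}, simultaneously incorporating the braiding bookkeeping of the latter and the label bookkeeping of the former. Since $y$ lies in $GB\mathcal{B}_n(\delta)$ it has no missing edges, so its membership in $GBJ_p$ forces its right link state to be obtained from $p$ using only braided $G$-splices and label replacements: a deletion would leave an uncovered node and hence a missing edge, which is impossible. Because $p$ has no missing edges it consists of exactly $i$ defects together with $\frac{1}{2}(n-i)$ arcs, and the available operations preserve the arcs of $p$ (splices act only on defects and label replacements act only on hanging edges). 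First I would record that the underlying arcs of $p$ therefore survive unchanged, with their original labels, inside the right link state of $y$, while any spliced arcs merely join pairs of former defects.

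Next I would compute the composite $y\cdot h_p$ by tracing strands through the identified middle column. Recall from Definition \ref{hp-diag-defn} that the left link state of $h_p$ is $p$, with its left-to-left arcs carrying the inverse labels of $p$'s arcs, and that its left-to-right connections are horizontal edges with the identity label. Each arc $\{a,b\}$ of $p$ then meets the matching left-to-left arc of $h_p$ at both of its endpoints, closing up into a loop; this accounts for precisely $\frac{1}{2}(n-i)$ loops, a count that depends only on $p$ and not on the splices made to produce $y$. Every other strand of $y$ is extended by a horizontal edge of $h_p$ of the same height and identity label: the former defects of $y$ trace through to left-to-right connections, and each spliced arc traces through to a right-to-right arc in the same position. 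One then checks that the resulting left and right link states, together with all crossings and surviving labels, reproduce the diagram $y$ (and that, since both $y$ and $h_p$ are Brauer diagrams, no contractible components arise, so $\varepsilon$ never enters).

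It then remains to evaluate the scalar. By the loop-removal rule of Definition \ref{GBRB-defn}, each loop is deleted, irrespective of crossings, at the cost of $(gh)\delta$, where $g$ and $h$ are its left-hand and right-hand labels; here $g$ is the label of the arc of $p$ sitting inside $y$ and $h=g^{-1}$ is the inverse label placed on the corresponding left-to-left arc of $h_p$, so that $gh=1$ and the cost is exactly $\delta$. Multiplying over the $\frac{1}{2}(n-i)$ loops yields $y\cdot h_p=\delta^{\frac{1}{2}(n-i)}y$, as claimed.

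I expect the main obstacle to be the verification in the second paragraph that the composite really equals $y$, and not merely a diagram with the same pair of link states: one must confirm that threading $y$'s strands through the horizontal, identity-labelled edges of $h_p$ leaves the over/under-crossing data and the surviving edge labels untouched. The braiding is the delicate point, but it is handled exactly as in Lemma \ref{braided-tech-lem}, precisely because the loop-removal rule is insensitive to crossings; once this identification is in place the argument reduces to the label bookkeeping above, which is governed by the inverse-label convention built into Definition \ref{hp-diag-defn}.
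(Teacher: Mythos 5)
Your proof is correct and takes essentially the same approach as the paper: the paper's own proof simply cites \cite[Lemma 6.1]{Boyde} with $h_p$ playing the role of $d_p$, noting that the labelling convention of Definition \ref{hp-diag-defn} ensures every loop formed in $y\cdot h_p$ carries the identity label --- which is precisely the computation you spell out. Your detailed bookkeeping (no deletions since $y$ has no missing edges, each of the $\frac{1}{2}(n-i)$ arcs of $p$ closing against the matching inverse-labelled left-to-left arc of $h_p$ into a loop of label $gg^{-1}=1$ contributing $\delta$, and the horizontal identity-labelled strands reconstituting $y$ irrespective of crossings) is exactly the intended argument written out in full.
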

\begin{proof}
The proof follows the same method as \cite[Lemma 6.1]{Boyde}, with $h_p$ playing the role of $d_p$. We note that our labelling convention in forming the diagram $h_p$ ensures that the label on any loops formed in the composite $y\cdot h_p$ is the identity element in $G$.
\end{proof}

\section{Group algebra retracts}
\label{group-alg-sec}
In this section we prove the analogues of Boyde's results on group algebra retractions \cite[Section 2]{Boyde}. We treat the equivariant rook algebras and the equivariant rook-Brauer algebras separately as the former hold for any group whereas the latter require a restriction to abelian groups.

\begin{defn}
Fix an natural number $n$.
\begin{enumerate}
\item Let $G$ be an abelian group. Let $BS_{max}$ (resp. $GS_{max}$ and $GBS_{max}$) denote the set of braided rook-Brauer $n$-diagrams (resp. $G$-rook-Brauer $n$-diagrams and $G$-braided rook-Brauer $n$-diagrams) with the maximum number of left-to-right connections.
\item Let $G$ be a group. Let $GR_{max}$ (resp. $GBR_{max}$) denote the set of  $G$-rook $n$-diagrams (resp. $G$-braided rook $n$-diagrams) with the maximum number of left-to-right connections.
\end{enumerate}
\end{defn}

\begin{prop}
\label{group-alg-retract-prop1}
Let $\delta,\, \varepsilon \in k$. Let $n\in \mathbb{N}$.
\begin{enumerate}
\item Let $G$ be an abelian group.
\begin{enumerate}
\item the sets $BS_{max}$, $GS_{max}$ and $GBS_{max}$ form multiplicatively closed subsets of the algebras $B\mathcal{RB}_n(\delta, \varepsilon)$, $G\mathcal{RB}_n(\delta, \varepsilon)$ and $GB\mathcal{RB}_n(\delta, \varepsilon)$ respectively;
\item these subsets are canonically isomorphic to the braid group $B_n$, the group $G^n\rtimes \Sigma_n$ and the group $G^n \rtimes B_n$ respectively.
\end{enumerate}
\item Let $G$ be a group. 
\begin{enumerate}
\item the sets $GR_{max}$ and $GBR_{max}$ form multiplicatively closed subsets of the algebras $G\mathcal{R}_n(\varepsilon)$ and $GB\mathcal{R}_n(\varepsilon)$ respectively;
\item these subsets are canonically isomorphic to the group $G^n\rtimes \Sigma_n$ and the group $G^n \rtimes B_n$ respectively.
\end{enumerate}
\end{enumerate}
\end{prop}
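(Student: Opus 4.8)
The plan is to treat all six cases on a common footing, since a diagram attaining the maximum number of left-to-right connections is forced into a very rigid shape. First I would observe that the maximum possible number of left-to-right connections in any $n$-diagram is $n$, and that a diagram attaining it must use up all $2n$ nodes in left-to-right strands: it has no left-to-left connections, no right-to-right connections and no missing edges. Hence the underlying unlabelled, unbraided diagram of any element of $S_{max}$ is a perfect matching of the left column with the right column, that is, the graph of a permutation $\sigma\in\Sigma_n$. The remaining data is exactly the extra structure carried by the ambient algebra: a $G$-label on each strand in the equivariant cases, an over/under crossing datum in the braided cases, and both together in the $G$-braided case.

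Next I would establish multiplicative closure by showing that no scalar prefactor ever appears. When concatenating two maximal diagrams $x$ and $y$ and identifying the right nodes of $x$ with the left nodes of $y$, every middle node is the endpoint of a left-to-right strand of $x$ on one side and of a left-to-right strand of $y$ on the other, so every middle node lies on a through-path from a left node of $x$ to a right node of $y$. Since the formation of a loop or a contractible component during composition requires middle nodes that are \emph{not} on such a through-path (they must close up via left-to-left or right-to-right connections, or be left dangling by a missing edge), and here every middle node is through-connected, the composite contains no closed loops and no contractible components. Therefore $x\cdot y$ is again a single diagram, with no factors of $\delta$ or $\varepsilon$, its underlying matching is the composite permutation, and it again has $n$ left-to-right connections. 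This proves closure for each of $BS_{max}$, $GS_{max}$, $GBS_{max}$, $GR_{max}$ and $GBR_{max}$. Crucially, this argument never forms a loop, so it makes no use of commutativity in $G$; this is exactly why the rook cases in part (2) are valid for an arbitrary group $G$, whereas part (1) is phrased for abelian $G$ only because the ambient algebras $G\mathcal{RB}_n$ and $GB\mathcal{RB}_n$ are themselves defined only in that case.

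Finally I would construct the isomorphisms. The identity diagram (horizontal strands, trivial labels, no crossings) is a two-sided unit, and each maximal diagram is invertible via its left-right mirror, with labels replaced by their inverses and crossings switched between over and under; closure together with these inverses shows each $S_{max}$ is a group. The candidate isomorphism sends a diagram to its permutation (resp.\ braid) together with its tuple of strand labels, landing in $\Sigma_n$, $B_n$, $G^n\rtimes\Sigma_n$ or $G^n\rtimes B_n$ as appropriate. Tracking the composition rule of Definitions \ref{G-rook-alg-comp-defn} and \ref{G-rook-brauer-comp-defn} through the previous paragraph, one finds that the permutation/braid parts compose by stacking, while the label on the strand leaving left node $i$ of $x\cdot y$ is the product $a_i\,b_{\sigma(i)}$ of the label $a_i$ on $x$ and the label $b_{\sigma(i)}$ on $y$; this is precisely the twisting of the semidirect product, so the map is a homomorphism, and bijectivity is immediate from the rigid description above. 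The convention for the semidirect-product action (equivalently the identification $\Sigma_n\cong\Sigma_n^{\mathrm{op}}$ forced by the order in which strands stack) should be fixed once at the outset; I regard this bookkeeping as routine.

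The main obstacle is the braided identification $BS_{max}\cong B_n$. For this I would argue that a braided $n$-diagram with maximal connections is, up to the ambient isotopy implicit in the diagram calculus, exactly an $n$-strand braid, and that diagram composition is stacking of braids; the real content is that the over/under crossing data modulo isotopy realises precisely the braid relations, so that the set of such diagrams under stacking is the geometric braid group $B_n$ rather than merely its image $\Sigma_n$. Once this is in place, the braided and $G$-braided cases follow by substituting $B_n$ for $\Sigma_n$ in the bookkeeping above, with the verification of the semidirect-product multiplication being identical; and part (2) is then immediate, since for a diagram of rook type the condition of maximal connections already coincides with the condition defining $GS_{max}$ and $GBS_{max}$.
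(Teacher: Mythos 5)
Your proposal is correct and is essentially the paper's own argument in expanded form: the paper's proof is a two-line appeal to the correspondence between group elements and their pictorial representations (following Boyde's Proposition 2.1), with composition of diagrams matching group multiplication, and your write-up simply supplies the details this reference suppresses — the rigidity of maximal diagrams, the absence of loops and contractible components (hence no $\delta$ or $\varepsilon$ prefactors), and the semidirect-product bookkeeping for the labels.
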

\begin{proof}
Each statement follows by sending a group element to its pictorial representation, similarly to \cite[Proposition 2.1]{Boyde}. Composition of group elements then corresponds to composition of $n$-diagrams.
\end{proof}

\begin{cor}
We have the following corollaries.
\label{subgroup-cor}
\begin{enumerate}
\item Let $G$ be an abelian group. Let $A$ be a subalgebra of $B\mathcal{RB}_n(\delta, \varepsilon)$ (resp. $G\mathcal{RB}_n(\delta, \varepsilon)$ and $GB\mathcal{RB}_n(\delta, \varepsilon)$), which is free on a subset of $n$-diagrams and contains at least one diagram having $n$ left-to-right connections. The set of diagrams in $A$ having $n$ left-to-right connections is multiplicatively closed and is canonically isomorphic to some subgroup $H\leqslant B_n$ (resp. $G^n\rtimes \Sigma_n$ and $G^n\rtimes B_n$).
\item Let $G$ be a group. Let $A$ be a subalgebra of $G\mathcal{R}_n(\varepsilon)$ (resp. $GB\mathcal{R}_n(\varepsilon)$), which is free on a subset of $n$-diagrams and contains at least one diagram having $n$ left-to-right connections. The set of diagrams in $A$ having $n$ left-to-right connections is multiplicatively closed and is canonically isomorphic to some subgroup $H\leqslant G^n\rtimes \Sigma_n$ (resp. $G^n\rtimes B_n$).
\end{enumerate}
\end{cor}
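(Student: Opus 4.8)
The plan is to deduce both parts of Corollary \ref{subgroup-cor} directly from Proposition \ref{group-alg-retract-prop1} by restriction, treating the various ``resp.'' cases uniformly. Write $\Gamma$ for the relevant ambient group (so $\Gamma$ is one of $B_n$, $G^n\rtimes\Sigma_n$ or $G^n\rtimes B_n$), and let $S$ denote the set of diagrams in $A$ having $n$ left-to-right connections. Then $S$ is a subset of the full set of maximal diagrams ($BS_{max}$, $GS_{max}$, $GBS_{max}$, $GR_{max}$ or $GBR_{max}$), which Proposition \ref{group-alg-retract-prop1} identifies with $\Gamma$ via a canonical isomorphism. The goal is to show that, under this identification, $S$ corresponds to a subgroup $H\leqslant\Gamma$.

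First I would establish that $S$ is closed under multiplication. The key observation is a feature of the composition of diagrams: if $x$ and $y$ each have $n$ left-to-right connections, then on concatenating $x$ and $y$ every node is joined through the middle by a fully left-to-right path, so no loops or contractible components are formed, and in the equivariant cases the edge labels simply multiply. Hence $x\cdot y$ is again a single diagram with $n$ left-to-right connections and with coefficient $1\in k$; no $\delta$- or $\varepsilon$-prefactors appear. Since $A$ is a subalgebra we have $x\cdot y\in A$, and since $A$ is free on a subset $D$ of diagrams, a single diagram lying in $A$ must itself belong to $D$. Therefore $x\cdot y\in S$, so $S$ is multiplicatively closed.

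Next I would identify the unit. The identity element of the ambient algebra is the diagram with $n$ horizontal left-to-right edges (with identity labels in the equivariant cases), which has $n$ left-to-right connections; as $A$ is unital this diagram lies in $A$, hence in $D$, so it lies in $S$ and maps to $1\in\Gamma$. Combined with the previous step, the canonical isomorphism of Proposition \ref{group-alg-retract-prop1} restricts to an injective, unit- and product-preserving map $S\hookrightarrow\Gamma$, exhibiting $S$ as a submonoid of $\Gamma$; denote its image by $H$.

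The remaining, and in my view the only substantial, point is to promote the submonoid $H$ to a subgroup, i.e.\ to verify closure under inverses. The clean mechanism is a cancellation argument: for $h\in H$, left multiplication by $h$ is an injection $H\to H$ because $\Gamma$ is a group, so when $H$ is finite it is a bijection, whence $1\in hH$ and $h^{-1}\in H$. This settles the cases in which $\Gamma$ is finite, in particular the rook and Brauer cases when $G$ is finite. The hard part will be the braided cases, where $\Gamma$ contains $B_n$ and is infinite, so that a multiplicatively closed subset need not a priori be closed under inverses; here I would supplement the argument with the explicit description of which maximal diagrams occur in the subalgebra $A$ of interest, using it to check directly that $H$ is closed under inversion. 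This yields $S\cong H\leqslant\Gamma$ and completes the proof.
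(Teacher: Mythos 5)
Your first two steps are correct and are essentially what the paper's (very terse) proof does: compose two diagrams with $n$ left-to-right connections and note that no loops or isolated components can form, so the product is a single diagram with $n$ left-to-right connections and no $\delta$- or $\varepsilon$-prefactor; freeness of $A$ on a set of diagrams then forces this product to be a basis diagram of $A$, so $A\cap BS_{max}$ (resp.\ $A\cap GS_{max}$, etc.) is multiplicatively closed and embeds in $\Gamma$ via Proposition \ref{group-alg-retract-prop1}. The genuine gap is the step you yourself flag and then defer: closure under inverses. Your cancellation argument works only when $\Gamma$ is finite (the unbraided cases with $G$ finite), and your proposed remedy for the other cases --- ``supplement the argument with the explicit description of which maximal diagrams occur in the subalgebra $A$ of interest'' --- is not an argument, since the hypotheses supply no such description. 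In fact no argument can close this gap at the stated level of generality. Let $\sigma$ be the single-crossing diagram in $B\mathcal{B}_2(\delta)\subset B\mathcal{RB}_2(\delta,\varepsilon)$ and let $A$ be the $k$-span of $\{\sigma^k : k\geqslant 0\}$. Positive braids compose to positive braids with no loops, so $A$ is a subalgebra, free on a subset of diagrams, containing the identity diagram, which has $2$ left-to-right connections; yet its set of maximal diagrams corresponds under Proposition \ref{group-alg-retract-prop1} to $\mathbb{Z}_{\geqslant 0}\subset B_2\cong\mathbb{Z}$, which is multiplicatively closed and contains $1$ but is not a subgroup (nor abstractly isomorphic to any group). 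The same phenomenon occurs equivariantly with $G$ infinite, e.g.\ the span of the diagrams labelled by non-negative powers of a generator in $G\mathcal{R}_1(\varepsilon)$ with $G=\mathbb{Z}$. (A smaller unjustified step: you invoke ``$A$ is unital'' to place the identity diagram in $A$; subalgebras here need not contain the ambient unit. In the finite case this is harmless, since some positive power of any maximal diagram is the identity diagram, but it is another thing that fails in the example above if one drops $\sigma^0$.)

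To be fair to you, you have put your finger on exactly the point that carries the content of the word ``subgroup'', and the paper's own proof does not engage with it either: it asserts that the intersections $A\cap BS_{max}$, etc., ``are identified as subgroups'', following Boyde's Corollary 2.2, whose implicit justification (a nonempty multiplicatively closed subset of a group is a subgroup) uses finiteness of $\Sigma_n$ and does not transfer to $B_n$, $G^n\rtimes\Sigma_n$ or $G^n\rtimes B_n$ when these are infinite. A correct resolution must do one of the following: restrict to finite $\Gamma$, where your cancellation argument finishes the proof; weaken the conclusion of Corollary \ref{subgroup-cor} from ``subgroup'' to ``submonoid''; add a hypothesis on $A$ guaranteeing inverse-closure (e.g.\ closure under an anti-involution of diagrams realizing inversion in $\Gamma$); or observe that in every application in the paper (via Proposition \ref{group-alg-retract-prop2} and the theorems of Sections \ref{gen-rook-hom-sec}--\ref{gen-RB-sec}) the subalgebra $A$ is the full algebra, so the set of maximal diagrams is all of $BS_{max}$ (resp.\ $GS_{max}$, $GBS_{max}$, $GR_{max}$, $GBR_{max}$) and hence the whole group $\Gamma$ by Proposition \ref{group-alg-retract-prop1}. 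As it stands, your proposal is incomplete at precisely this step, and the plan you sketch for completing it would fail.
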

\begin{proof}
As for \cite[Corollary 2.2]{Boyde}, we note that in each case the sets $BS_{max}$, $GS_{max}$, $GBS_{max}$, $GR_{max}$ and $GBR_{max}$ have non-trivial intersection with $A$. Therefore, under the isomorphisms of Proposition \ref{group-alg-retract-prop1}, the intersections $A\cap BS_{max}$, $A\cap GS_{max}$, $A\cap GBS_{max}$, $A\cap GR_{max}$ and $A\cap GBR_{max}$  are identified as subgroups of $B_n$, $G^n\rtimes \Sigma_n$ and $G^n\rtimes B_n$ as appropriate.
\end{proof}

\begin{prop}
We have the following propositions.
\label{group-alg-retract-prop2}
\begin{enumerate}
\item Let $G$ be an abelian group. Let $A$ be a subalgebra of $B\mathcal{RB}_n(\delta, \varepsilon)$ (resp. $G\mathcal{RB}_n(\delta, \varepsilon)$ and $GB\mathcal{RB}_n(\delta, \varepsilon)$), such that
\begin{itemize}
\item $A$ is free on a subset of the $n$-diagrams and
\item $A$ contains at least one diagram with $n$ left-to-right connections.
\end{itemize} 
Let $A_{max}$ denote the $k$-span of the diagrams in $A$ with $n$ left-to-right connections. The following properties are satisfied:
\begin{enumerate}
\item $A_{max}$ is a subalgebra of $A$ and it is canonically isomorphic to the group ring $k[H]$ for some $H\leqslant B_n$ (resp. $G^n\rtimes \Sigma_n$ and $G^n\rtimes B_n$),
\item $A_{max}$ is isomorphic to the quotient $A/(A \cap I_{n-1})$. Hence $A_{max}$ is a retract of $A$.
\end{enumerate}
\item Let $G$ be a group. Let $A$ be a subalgebra of $G\mathcal{R}_n(\varepsilon)$ (resp. $GB\mathcal{R}_n(\varepsilon)$), such that
\begin{itemize}
\item $A$ is free on a subset of the $n$-diagrams and
\item $A$ contains at least one diagram with $n$ left-to-right connections.
\end{itemize} 
Let $A_{max}$ denote the $k$-span of the diagrams in $A$ with $n$ left-to-right connections. The following properties are satisfied:
\begin{enumerate}
\item $A_{max}$ is a subalgebra of $A$ and it is canonically isomorphic to the group ring $k[H]$ for some $H\leqslant G^n\rtimes \Sigma_n$ (resp. $G^n\rtimes B_n$),
\item $A_{max}$ is isomorphic to the quotient $A/(A \cap I_{n-1})$. Hence $A_{max}$ is a retract of $A$.
\end{enumerate}
\end{enumerate}
\end{prop}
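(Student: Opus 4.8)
The plan is to assemble the statement from the multiplicative-closure results already in hand, namely Proposition \ref{group-alg-retract-prop1} and Corollary \ref{subgroup-cor}, together with the freeness hypothesis on $A$ and one elementary observation about when the prefactors $\delta$ and $\varepsilon$ appear. I would treat all the cases in Part 1 uniformly, since the only difference between $B\mathcal{RB}_n$, $G\mathcal{RB}_n$ and $GB\mathcal{RB}_n$ is which group ($B_n$, $G^n\rtimes\Sigma_n$ or $G^n\rtimes B_n$) and which bookkeeping data (crossings, $G$-labels) is recorded; the combinatorics governing the number of left-to-right connections is identical throughout.

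First I would record that a diagram on two columns of $n$ nodes has at most $n$ left-to-right connections, and that attaining exactly $n$ forces the diagram to have no missing edges, no left-to-left connections and no right-to-right connections. Hence the diagrams in $A$ with $n$ left-to-right connections are precisely those lying in $A\cap BS_{max}$ (resp. $A\cap GS_{max}$, $A\cap GBS_{max}$), and by Corollary \ref{subgroup-cor} this set is multiplicatively closed and canonically identified with a subgroup $H$ of the relevant group. For part (a) I would then observe that composing two diagrams each having $n$ left-to-right connections can create neither a loop (there are no left-to-left or right-to-right strands to close up) nor a contractible component (there are no missing edges). Consequently the prefactors $(gh)\delta$ and $\varepsilon$ never arise: the product is a single basis diagram with coefficient $1$, again with $n$ left-to-right connections. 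So multiplication in $A_{max}$ agrees exactly with the group multiplication of $H$ transported through the bijection of Corollary \ref{subgroup-cor}, giving a $k$-algebra isomorphism $A_{max}\cong k[H]$.

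For part (b) the structural input is the freeness hypothesis. Writing $S$ for the subset of $n$-diagrams on which $A$ is free, I would partition $S$ into the diagrams with exactly $n$ left-to-right connections and those with at most $n-1$. The former span $A_{max}$, while the latter span exactly $A\cap I_{n-1}$, since $I_{n-1}$ is by definition the span of all diagrams with at most $n-1$ left-to-right connections. This gives a $k$-module decomposition $A = A_{max}\oplus (A\cap I_{n-1})$. As $I_{n-1}$ is a two-sided ideal, $A\cap I_{n-1}$ is a two-sided ideal of $A$, so the composite $A_{max}\hookrightarrow A\twoheadrightarrow A/(A\cap I_{n-1})$ is a homomorphism of $k$-algebras; the decomposition shows it is bijective, whence $A_{max}\cong A/(A\cap I_{n-1})$. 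Since this isomorphism is inverse to the restriction of the quotient map to $A_{max}$, the inclusion of $A_{max}$ splits the quotient, exhibiting $A_{max}$ as a retract of $A$.

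The only point needing genuine care, and the main obstacle, is the claim in part (a) that the prefactors really are trivial, that is, that composing two maximal diagrams yields neither a loop nor a contractible component in each of the braided and equivariant settings; once this is verified the remainder is formal, resting on Corollary \ref{subgroup-cor} and the direct-sum decomposition. Part 2 then follows by the identical argument, using $GR_{max}$ and $GBR_{max}$ and the subgroups of $G^n\rtimes\Sigma_n$ and $G^n\rtimes B_n$ furnished by Corollary \ref{subgroup-cor}, with the simplification that rook diagrams admit no loops at all, so for them the question of prefactors scarcely arises and no restriction to abelian $G$ is needed.
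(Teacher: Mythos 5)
Your proof is correct and follows essentially the same route as the paper, which deduces part (a) from Corollary \ref{subgroup-cor} and part (b) by the method of Boyde's Proposition 2.3 --- namely the $k$-module decomposition $A = A_{max}\oplus (A\cap I_{n-1})$ coming from partitioning the diagram basis by the number of left-to-right connections. Your explicit verification that no loops or contractible components (hence no prefactors $\delta$, $\varepsilon$) arise when composing maximal diagrams is exactly the point implicit in the paper's citation, so the two arguments coincide in substance.
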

\begin{proof}
In each case, the first part follows directly from Corollary \ref{subgroup-cor}. The second part follows the method of \cite[Proposition 2.3]{Boyde}.
\end{proof}

\section{Homology of generalized rook algebras}
\label{gen-rook-hom-sec}
In this section we study the homology of generalized rook algebras when the parameter $\varepsilon$ is invertible. We show that the homology of a braided rook algebra coincides with the group homology of the braid groups. When $G$ is a group, we show that the homology of the $G$-rook algebras and the $G$-braided rook algebras coincide with the group homology of the groups $G^n\rtimes \Sigma_n$ and $G^n\rtimes B_n$ respectively. We deduce homological stability for these generalized rook algebras.

\subsection{Idempotents}

\begin{defn}
For $0\leqslant i \leqslant n$, let $\rho_i$ denote the element in a generalized rook algebra obtained from the identity diagram by deleting the $i^{th}$ edge.
\end{defn}

\begin{lem}
\label{idempotent-lemma}
If $\varepsilon \in k$ is invertible, then $\varepsilon^{-1}\rho_i$ is idempotent.
\end{lem}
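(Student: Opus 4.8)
The plan is to establish the single identity $\rho_i \cdot \rho_i = \varepsilon\, \rho_i$ and then read off the conclusion: since $\varepsilon$ is invertible, $(\varepsilon^{-1}\rho_i)^2 = \varepsilon^{-2}(\rho_i\cdot \rho_i) = \varepsilon^{-2}(\varepsilon\,\rho_i) = \varepsilon^{-1}\rho_i$, which is exactly the statement that $\varepsilon^{-1}\rho_i$ is idempotent. So the entire content of the lemma reduces to computing one composite.

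To carry this out I would first describe $\rho_i$ explicitly. It is the diagram whose left node $j$ is joined to right node $j$ by a horizontal edge for every $j\neq i$ (carrying the identity label $1\in G$ in the equivariant cases, and with no crossings in the braided cases), while the nodes at position $i$ on both the left and the right are left unconnected. I would then concatenate two copies according to the relevant composition (Definition \ref{G-rook-alg-comp-defn}) and inspect each middle node in turn. For $j\neq i$ the horizontal edge of the first copy meets the horizontal edge of the second copy at middle node $j$, producing a single through-strand from left node $j$ to right node $j$, whose label is $1\cdot 1 = 1$ in the equivariant cases. At position $i$, the defect on the right of the first copy meets the defect on the left of the second copy at the shared middle node, producing a contractible component; by the defining condition of the generalized rook algebra this contributes precisely the pre-factor $\varepsilon$. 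No loops are created anywhere, consistent with the rook algebras being independent of $\delta$. Hence the underlying diagram of the composite is again $\rho_i$, weighted by a single factor of $\varepsilon$, giving $\rho_i\cdot\rho_i = \varepsilon\,\rho_i$.

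The main point to watch is the bookkeeping at the deleted node: I must check that the two defects meeting at middle node $i$ are counted as exactly one contractible component, so that exactly one factor of $\varepsilon$ (and no factor of $\delta$) appears, and that this local analysis is uniform across all four variants of the generalized rook algebra. In the braided and equivariant-braided settings this uniformity is immediate, since $\rho_i$ has no crossings, so the braiding data plays no role in the composite; and in the equivariant settings the labels on the surviving through-strands multiply to the identity, so no nontrivial group elements are introduced. With this verified the identity $\rho_i\cdot\rho_i=\varepsilon\,\rho_i$ holds in every case, and the idempotence of $\varepsilon^{-1}\rho_i$ follows as above.
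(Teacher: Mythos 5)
Your proposal is correct and follows essentially the same route as the paper: the paper's proof is exactly the one-line computation $\left(\varepsilon^{-1}\rho_i\right)^2=\varepsilon^{-2}\rho_i^2=\varepsilon^{-2}\varepsilon\rho_i=\varepsilon^{-1}\rho_i$, taking the relation $\rho_i^2=\varepsilon\rho_i$ as read, while you additionally verify that relation diagrammatically (the isolated middle vertex at position $i$ is the single contractible component giving the factor $\varepsilon$, and no loops or nontrivial labels arise). The only nitpick is terminological: the unconnected nodes of $\rho_i$ are missing edges rather than \emph{defects} (which in this paper means hanging edges of link states), but this does not affect the argument.
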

\begin{proof}
We observe that 
\[\left(\varepsilon^{-1}\rho_i\right)^2= \varepsilon^{-2} \rho_i^2= \varepsilon^{-2}\varepsilon \rho_i=\varepsilon^{-1}\rho_i\]
as required.
\end{proof}

\begin{lem}
\label{ideal-basis-lem}
Let $X$ be a generalized rook algebra. For any $\varepsilon \in k$, the left ideal $X\cdot \rho_i$ of $X$ has $k$-basis consisting of those diagrams where the $i^{th}$ node on the right is not connected to anything on the left.
\end{lem}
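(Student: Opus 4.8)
The plan is to identify an explicit basis for the left ideal $X\cdot\rho_i$ and verify that it consists precisely of the diagrams described. Recall that $\rho_i$ is the identity diagram with its $i^{th}$ edge deleted; that is, $\rho_i$ has a horizontal left-to-right connection between the $j^{th}$ nodes for every $j\neq i$, and the $i^{th}$ node on both the left and the right is a defect (unconnected). First I would compute, for an arbitrary basis diagram $x$ of $X$, what the composite $x\cdot\rho_i$ looks like. When we concatenate $x$ with $\rho_i$ and forget the middle vertices, every middle node except the $i^{th}$ is passed straight through by the horizontal edge of $\rho_i$, while the $i^{th}$ middle node is a dead end on the $\rho_i$ side. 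Consequently, any strand of $x$ that terminated at the $i^{th}$ right-hand node of $x$ now has nowhere to go: it becomes a defect (or gets deleted). The effect of right-multiplication by $\rho_i$ is therefore exactly to \emph{sever} whatever was attached to the $i^{th}$ right-hand node of $x$, leaving the $i^{th}$ right-hand node of the composite disconnected from everything on the left.

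The key observation is that this severing operation is idempotent in the following sense: if $x$ already has its $i^{th}$ right-hand node disconnected from the left (either as a defect, or connected only to another right-hand node), then $x\cdot\rho_i$ returns $x$ essentially unchanged — the horizontal pass-through edges of $\rho_i$ reattach the remaining connections, and no loop or contractible component is formed that would introduce a scalar. Thus every such diagram $x$ lies in $X\cdot\rho_i$. Conversely, for any basis diagram $x$, the composite $x\cdot\rho_i$ is (up to a scalar from any $\varepsilon$-prefactor arising from a severed right-to-right connection collapsing to a contractible component) a single diagram whose $i^{th}$ right-hand node is not connected to anything on the left. This shows the spanning set $X\cdot\rho_i$ is contained in the $k$-span of the claimed diagrams.

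To finish, I would argue the two inclusions give equality. The diagrams whose $i^{th}$ right-hand node is disconnected from the left are fixed (or scaled by a unit-free integer power of $\varepsilon$, but in fact fixed) by right multiplication by $\rho_i$, so they all appear in $X\cdot\rho_i$; conversely every element of $X\cdot\rho_i$ is a $k$-linear combination of composites $x\cdot\rho_i$, each of which is a scalar multiple of such a diagram. Since $X$ is free as a $k$-module on its diagrams, these diagrams are $k$-linearly independent, and hence form a genuine $k$-basis for $X\cdot\rho_i$. The main obstacle to watch is the bookkeeping in the composition step: I must confirm that no spurious loop or contractible component arises in $x\cdot\rho_i$ that would contribute a factor of $\delta$ or $\varepsilon$ and thereby spoil the claim that the basis is obtained \emph{for any} $\varepsilon\in k$ (in particular without requiring invertibility). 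This is where the pass-through structure of $\rho_i$ — a single deleted edge in an otherwise identity diagram — is essential, since it guarantees that multiplication by $\rho_i$ only ever deletes a connection rather than creating a closed loop, so the coefficient is always $1$.
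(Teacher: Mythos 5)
Your computation of $x\cdot\rho_i$ in the case where the $i$-th right-hand node of $x$ \emph{is} attached to a left-hand node is correct, and so is the conclusion that every product $x\cdot\rho_i$ is a scalar multiple of a diagram whose $i$-th right-hand node is isolated. The genuine gap is in the reverse inclusion. Your ``key observation'' --- that if $x$ already has its $i$-th right-hand node disconnected then $x\cdot\rho_i$ is $x$ unchanged because ``no loop or contractible component is formed'' --- is false. In that case the $i$-th middle vertex of the concatenated diagram is isolated (it carries a missing edge on both the $x$ side and the $\rho_i$ side), and an isolated middle vertex is precisely a contractible component, so $x\cdot\rho_i=\varepsilon x$. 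This is exactly why $\rho_i$ itself fails to be idempotent and instead satisfies $\rho_i^2=\varepsilon\rho_i$: the entire content of Lemma \ref{idempotent-lemma} is that one must rescale by $\varepsilon^{-1}$ to kill this factor. Since the present lemma is asserted for \emph{any} $\varepsilon\in k$ --- including $\varepsilon$ non-invertible or zero --- the identity $x\cdot\rho_i=\varepsilon x$ does not exhibit $x$ as an element of $X\cdot\rho_i$, and your argument that every diagram with isolated $i$-th right node lies in the ideal collapses; your later parenthetical ``but in fact fixed'' is the same error restated. (Incidentally, the case ``connected only to another right-hand node'' cannot occur here: rook diagrams have no right-to-right connections.)

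The missing idea, which is how the argument of \cite{Boyde} invoked by the paper actually runs, is to produce for each diagram $y$ with isolated $i$-th right node an explicit preimage with coefficient exactly $1$. Since the $i$-th right-hand node of $y$ is unused, $y$ has at most $n-1$ edges, so some left-hand node $a$ is also unused; let $x$ be $y$ with one extra edge joining left node $a$ to right node $i$ (labelled by $1\in G$, and with an arbitrary choice of crossings, in the equivariant and braided cases). In the composite $x\cdot\rho_i$ this added edge is severed, and its connected component meets the left-hand column, so it contributes no factor of $\varepsilon$; the composite is therefore $y$ on the nose. This yields the containment of the claimed span in $X\cdot\rho_i$ for arbitrary $\varepsilon$, and linear independence then follows, as you say, from freeness of $X$ on its set of diagrams.
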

\begin{proof}
The proof follows the method of \cite[Lemma 4.3]{Boyde}.
\end{proof}

\subsection{Homology of generalized rook algebras}

We are now in the position to prove our results about the homology of generalized rook algebras. The proofs for the three statements are similar. We will prove the result for braided rook algebras and simply state the equivariant cases.

\begin{thm}
\label{braid-rook-thm}
Let $\varepsilon \in k$ be invertible. Let $B\mathcal{R}_n(\varepsilon)$ be the braided rook algebra. Let $B_n$ denote the braid group on $n$ strands and let $k[B_n]$ denote the group algebra. There exists an isomorphism of graded $k$-modules
\[\mathrm{Tor}_{\star}^{B\mathcal{R}_n(\varepsilon)}\left(\mathbbm{1},\mathbbm{1}\right) \cong \mathrm{Tor}_{\star}^{k[B_n]}\left(\mathbbm{1},\mathbbm{1}\right)=H_{\star}\left(B_n , k\right).\]
\end{thm}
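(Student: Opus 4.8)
The plan is to run Boyde's idempotent-and-ideal filtration argument in the braided setting, feeding on the technical input assembled in Sections \ref{technical-results-sec} and \ref{group-alg-sec}. Set $A = B\mathcal{R}_n(\varepsilon)$, regarded as a subalgebra of $B\mathcal{RB}_n(\delta,\varepsilon)$; as a $k$-module it is free on the braided rook $n$-diagrams, so the freeness hypothesis of Proposition \ref{braided-tech-prop} is satisfied. The strategy is to verify the idempotent hypothesis of that proposition over the whole range $0\leqslant i\leqslant n-1$, apply it to collapse the chain of $\mathrm{Tor}$-groups down to the top quotient $A/(A\cap I_{n-1})$, and then identify this quotient as a group algebra using Proposition \ref{group-alg-retract-prop2}.

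First I would construct the required idempotents. A link state $p\in BP_i$ that is realized by a braided rook diagram carries no internal edges, only $i$ defects and $n-i$ missing edges, so it is determined by its set $S$ of defect positions, with $\lvert S\rvert = i$. Since $i\leqslant n-1$ there is at least one non-defect position, and I would set $e_p = \prod_{j\notin S}\varepsilon^{-1}\rho_j$. By Lemma \ref{idempotent-lemma} each factor is idempotent, and as the $\rho_j$ commute so does their product, so $e_p$ is an idempotent lying in $A$. The substance of this step is the equality of left ideals $A\cdot e_p = A\cap BJ_p$: by Lemma \ref{ideal-basis-lem}, right-multiplication by $\prod_{j\notin S}\rho_j$ (the scalar $\varepsilon^{-(n-i)}$ being a unit) cuts out exactly those diagrams whose right-hand nodes outside $S$ are all disconnected, which is precisely the span of diagrams whose right link state is obtained from $p$ by deletions, namely $A\cap BJ_p$. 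In the rook setting $BJ_p$ involves deletions only, never splices, and this is exactly what makes the product of the $\rho_j$ the correct idempotent.

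With the hypotheses of Proposition \ref{braided-tech-prop} verified for $l=0$ and $m=n-1$, I would obtain the chain
\[\mathrm{Tor}_{\star}^{A/(A\cap I_{-1})}\left(\mathbbm{1},\mathbbm{1}\right) \cong \cdots \cong \mathrm{Tor}_{\star}^{A/(A\cap I_{n-1})}\left(\mathbbm{1},\mathbbm{1}\right).\]
Since $I_{-1}=0$, the left-hand term is $\mathrm{Tor}_{\star}^{B\mathcal{R}_n(\varepsilon)}\left(\mathbbm{1},\mathbbm{1}\right)$. For the right-hand term, Proposition \ref{group-alg-retract-prop2} identifies $A/(A\cap I_{n-1})$ with $A_{max}\cong k[H]$ for the subgroup $H\leqslant B_n$ of maximal-connection diagrams; for the \emph{full} braided rook algebra the diagrams with $n$ left-to-right connections are exactly the braids, so $H = B_n$ and $A_{max}\cong k[B_n]$. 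Finally, since $\mathbbm{1}$ is the trivial module, $\mathrm{Tor}_{\star}^{k[B_n]}\left(\mathbbm{1},\mathbbm{1}\right) = H_{\star}(B_n,k)$, which yields the claimed isomorphism.

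I expect the main obstacle to be the left-ideal equality $A\cdot e_p = A\cap BJ_p$: one must check that the product idempotent built from the $\rho_j$ is genuinely adapted to the link state $p$, i.e. that deleting the connections at all non-defect positions carves out the deletion-closure of $p$ and nothing more. The remaining steps are formal applications of the machinery, and the equivariant analogues (yielding $G^n\rtimes\Sigma_n$ and $G^n\rtimes B_n$) go through verbatim, replacing braided splices by $G$-splices or braided $G$-splices and invoking the corresponding parts of Propositions \ref{G-braided-tech-prop} and \ref{group-alg-retract-prop2}.
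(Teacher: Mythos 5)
Your proposal is correct in substance, but it takes a genuinely different and heavier route than the paper. The paper's own proof never touches link states: writing $X=B\mathcal{R}_n(\varepsilon)$, it forms the single left ideal $I=X(\varepsilon^{-1}\rho_1)+\cdots+X(\varepsilon^{-1}\rho_n)$ generated by the commuting idempotents of Lemma \ref{idempotent-lemma}, notes that $I$ acts as zero on $\mathbbm{1}$, and applies \cite[Theorem 4.1]{Boyde} once to obtain $\mathrm{Tor}_{\star}^{X}\left(\mathbbm{1},\mathbbm{1}\right)\cong\mathrm{Tor}_{\star}^{X/I}\left(\mathbbm{1},\mathbbm{1}\right)$; since $I$ is spanned by the diagrams with at least one missing connection, the quotient is identified with $k[B_n]$ by Proposition \ref{group-alg-retract-prop2}, which is exactly your final step. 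You instead invoke the link-state filtration machinery of Proposition \ref{braided-tech-prop}, which the paper deliberately reserves for the Brauer and Temperley--Lieb theorems, where $\delta$ rather than $\varepsilon$ is the invertible parameter and the rook-style argument is unavailable. Both routes use the same idempotents $\varepsilon^{-1}\rho_j$ and the same endgame; your route buys uniformity with Sections \ref{gen-brauer-sec}--\ref{sroka-sec}, while the paper's buys brevity and, importantly, avoids the combinatorics of braided link states entirely.

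Two points in your verification need more care. First, Lemma \ref{ideal-basis-lem} is stated for a single $\rho_i$; you need the iterated statement that $X\cdot\prod_{j\notin S}\rho_j$ has $k$-basis the diagrams whose right-hand nodes outside $S$ are all disconnected. This is true and proved the same way, but it is an extra step. Second, and more seriously, your central equality $A\cdot e_p=A\cap BJ_p$ presupposes that the right link state of a braided rook diagram is determined by its set $S$ of defect positions. The idempotent $e_p=\prod_{j\notin S}\varepsilon^{-1}\rho_j$ depends only on $S$, so $A\cdot e_p$ is the span of \emph{all} rook diagrams whose right-hand defects lie in $S$, with arbitrary braiding among the corresponding strands: for instance, with $n=3$ and $S=\{1,2\}$, both the diagram with horizontal strands $1\to 1$, $2\to 2$ and the diagram with crossing strands $1\to 2$, $2\to 1$ satisfy $x\cdot e_p=x$. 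If one reads the paper's (informal) definition of braided link states as retaining crossing data among hanging edges, these two diagrams have different right link states, $BJ_p$ is strictly smaller than $A\cdot e_p$, and the hypothesis of Proposition \ref{braided-tech-prop} fails for your choice of $e_p$. So your argument silently commits to the convention that defects carry no mutual braiding data; that convention is defensible (crossings between two cut strands cannot be canonically assigned to either half), but it must be stated, and the paper's proof sidesteps the issue altogether.
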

\begin{proof}
To simplify notation, we will write $X=B\mathcal{R}_n(\varepsilon)$.

Let $I=X(\varepsilon^{-1}\rho_1) + \cdots + X(\varepsilon^{-1}\rho_n)$ be the left ideal generated by the idempotents of Lemma \ref{idempotent-lemma}. The elements $\varepsilon^{-1}\rho_i$ and $\varepsilon^{-1}\rho_j$ commute for all $i$ and $j$. Furthermore, we note that $I$ acts trivially on the trivial module $\mathbbm{1}$. We can therefore apply \cite[Theorem 4.1]{Boyde} to obtain an isomorphism
\[\mathrm{Tor}_{\star}^{B\mathcal{R}_n(\varepsilon)}\left(\mathbbm{1},\mathbbm{1}\right) \cong  \mathrm{Tor}_{\star}^{B\mathcal{R}_n(\varepsilon)/I}\left(\mathbbm{1},\mathbbm{1}\right).\]

By definition, $I$ has a $k$-basis consisting of the diagrams with at least one missing left-to-right connection. Therefore the quotient $B\mathcal{R}_n(\varepsilon)/I$ has a $k$-basis consisting of the diagrams with no missing left-to-right connections.

By Proposition \ref{group-alg-retract-prop2}, we have an isomorphism
\[\frac{B\mathcal{R}_n(\varepsilon)}{I} \cong k[B_n]\]
and we see that the trivial module for $B\mathcal{R}_n(\varepsilon)$ restricts to the trivial module for $k[B_n]$. Therefore we have an isomorphism 
\[\mathrm{Tor}_{\star}^{B\mathcal{R}_n(\varepsilon)}\left(\mathbbm{1},\mathbbm{1}\right) \cong \mathrm{Tor}_{\star}^{k[B_n]}\left(\mathbbm{1},\mathbbm{1}\right)\]
as required.
\end{proof}

\begin{thm}
\label{G-symm-rook-thm}
Let $G$ be a group. Let $k$ be a commutative $G$-ring. Let $\varepsilon \in k$ be invertible. Let $G\mathcal{R}_n(\varepsilon)$ be the $G$-rook algebra. Let $G^n\rtimes \Sigma_n$ denote the semi-direct product, where $\Sigma_n$ acts on $G^n$ by permuting the factors. Let $k[G^n\rtimes \Sigma_n]$ denote the group algebra. There exists an isomorphism of graded $k$-modules
\[\mathrm{Tor}_{\star}^{G\mathcal{R}_n(\varepsilon)}\left(\mathbbm{1},\mathbbm{1}\right) \cong \mathrm{Tor}_{\star}^{k[G^n\rtimes \Sigma_n]}\left(\mathbbm{1},\mathbbm{1}\right)=H_{\star}\left(G^n\rtimes \Sigma_n , k\right).\]
\end{thm}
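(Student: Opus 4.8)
The plan is to follow the proof of Theorem~\ref{braid-rook-thm} verbatim, replacing the braid group $B_n$ throughout with the semi-direct product $G^n\rtimes\Sigma_n$ and the braided rook algebra with $G\mathcal{R}_n(\varepsilon)$. Writing $X=G\mathcal{R}_n(\varepsilon)$, I would first form the left ideal
\[I=X(\varepsilon^{-1}\rho_1)+\cdots+X(\varepsilon^{-1}\rho_n)\]
generated by the idempotents of Lemma~\ref{idempotent-lemma}, where now $\rho_i$ is the $G$-rook diagram obtained from the identity diagram (all horizontal edges carrying the label $1\in G$) by deleting the $i^{\mathrm{th}}$ edge.

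The next step is to verify the hypotheses of \cite[Theorem 4.1]{Boyde}. The elements $\varepsilon^{-1}\rho_i$ pairwise commute, since deleting distinct edges of the identity diagram are independent operations and the surviving labels remain the identity of $G$; and each $\rho_i$ has a missing edge, so it acts as $0\in k$ on the trivial module $\mathbbm{1}$, whence $I\cdot\mathbbm{1}=0$. Applying \cite[Theorem 4.1]{Boyde} then yields
\[\mathrm{Tor}_{\star}^{G\mathcal{R}_n(\varepsilon)}\left(\mathbbm{1},\mathbbm{1}\right)\cong\mathrm{Tor}_{\star}^{G\mathcal{R}_n(\varepsilon)/I}\left(\mathbbm{1},\mathbbm{1}\right).\]

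It remains to identify the quotient. By Lemma~\ref{ideal-basis-lem} the ideal $X\rho_i$ is spanned by those diagrams whose $i^{\mathrm{th}}$ right node is disconnected from the left, so $I$ is spanned by all $G$-rook diagrams with at least one missing left-to-right connection; equivalently $I=X\cap I_{n-1}$, and hence $X/I$ has $k$-basis the diagrams with the full complement of $n$ left-to-right connections. Proposition~\ref{group-alg-retract-prop2}(2), applied to $A=X$ (so that $H=G^n\rtimes\Sigma_n$), then gives $X/I=A/(A\cap I_{n-1})\cong k[G^n\rtimes\Sigma_n]$, under which the trivial module for $G\mathcal{R}_n(\varepsilon)$ restricts to the trivial module for the group algebra. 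Combining the two displayed isomorphisms completes the argument.

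The routine parts are all packaged into the cited results; the one point needing genuine care is the bookkeeping of the $G$-labels. The hard part will be confirming that the two conditions of \cite[Theorem 4.1]{Boyde}—that the $\varepsilon^{-1}\rho_i$ commute and act trivially—survive the equivariant decorations, and, more substantively, checking within Proposition~\ref{group-alg-retract-prop2} that the retract genuinely reproduces $k[G^n\rtimes\Sigma_n]$ once the $G$-action on $k$ is taken into account. Since $G$ need not be abelian here, I would want to confirm that no loops arise when composing diagrams in $GR_{max}$ (they cannot, as rook diagrams have no left-to-left or right-to-right connections), so that multiplication in the top piece faithfully reproduces the group law of $G^n\rtimes\Sigma_n$ and no ambiguity of the kind flagged in Remark~\ref{abelian-rem} can occur.
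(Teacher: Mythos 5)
Your proposal is correct and follows essentially the same route as the paper: the paper's proof of this theorem simply runs the argument of Theorem~\ref{braid-rook-thm} (idempotents $\varepsilon^{-1}\rho_i$, the ideal $I$, \cite[Theorem 4.1]{Boyde}, and the identification of the quotient via Proposition~\ref{group-alg-retract-prop2}) with $B_n$ replaced by $G^n\rtimes\Sigma_n$. Your extra checks on the $G$-labels and the absence of loops are sound and merely make explicit what the paper leaves implicit.
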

\begin{proof}
The proof of this statements follows the method Theorem \ref{braid-rook-thm} by replacing $B_n$ with $G^n\rtimes \Sigma_n$.
\end{proof}

\begin{rem}
In particular, take $G=C_2$, the cyclic group of order two. In this case, we have $C_2^n\rtimes \Sigma_n = H_n$, the $n^{th}$ hyperoctahedral group, and Theorem \ref{G-symm-rook-thm} gives us an isomorphism 
\[\mathrm{Tor}_{\star}^{C_2\mathcal{R}_n(\varepsilon)}\left(\mathbbm{1},\mathbbm{1}\right) \cong \mathrm{Tor}_{\star}^{k[H_n]}\left(\mathbbm{1},\mathbbm{1}\right)=H_{\star}\left(H_n, k\right).\]
\end{rem}

\begin{rem}
We observe that if we take $G$ to be the trivial group in Theorem \ref{G-symm-rook-thm} then we recover \cite[Theorem 4.4]{Boyde}.
\end{rem}

\begin{thm}
\label{G-braid-rook-thm}
Let $G$ be a group. Let $k$ be a commutative $G$-ring. Let $\varepsilon \in k$ be invertible. Let $GB\mathcal{R}_n(\varepsilon)$ be the $G$-braided rook algebra. Let $G^n\rtimes B_n$ denote the semi-direct product, where $B_n$ acts on $G^n$ by permuting the factors. Let $k[G^n\rtimes B_n]$ denote the group algebra. There exists an isomorphism of graded $k$-modules
\[\mathrm{Tor}_{\star}^{GB\mathcal{R}_n(\varepsilon)}\left(\mathbbm{1},\mathbbm{1}\right) \cong \mathrm{Tor}_{\star}^{k[G^n\rtimes B_n]}\left(\mathbbm{1},\mathbbm{1}\right)=H_{\star}\left(G^n\rtimes B_n , k\right).\]
\end{thm}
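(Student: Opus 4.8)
The plan is to run the argument of Theorem \ref{braid-rook-thm} essentially unchanged, substituting the $G$-braided rook algebra for the braided rook algebra and the group $G^n\rtimes B_n$ for $B_n$. To fix notation, write $X=GB\mathcal{R}_n(\varepsilon)$. Since $X$ is a generalized rook algebra, Lemma \ref{idempotent-lemma} applies and each $\varepsilon^{-1}\rho_i$ is an idempotent of $X$. First I would form the left ideal $I=X(\varepsilon^{-1}\rho_1)+\cdots+X(\varepsilon^{-1}\rho_n)$ generated by these idempotents.

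The next step is to check the hypotheses of \cite[Theorem 4.1]{Boyde}: that the idempotents $\varepsilon^{-1}\rho_i$ commute pairwise and that $I$ annihilates the trivial module $\mathbbm{1}$. Both are read off from the pictorial description of $\rho_i$ as the identity diagram with its $i$th edge deleted. Deletions at distinct positions commute, so $\rho_i\rho_j=\rho_j\rho_i$ and hence $(\varepsilon^{-1}\rho_i)(\varepsilon^{-1}\rho_j)=(\varepsilon^{-1}\rho_j)(\varepsilon^{-1}\rho_i)$; and every diagram with a missing edge acts as $0$ on $\mathbbm{1}$ by definition of the trivial module, so $I$ acts trivially. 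Applying \cite[Theorem 4.1]{Boyde} then yields $\mathrm{Tor}_{\star}^{X}(\mathbbm{1},\mathbbm{1})\cong\mathrm{Tor}_{\star}^{X/I}(\mathbbm{1},\mathbbm{1})$. By Lemma \ref{ideal-basis-lem}, $I$ has $k$-basis the diagrams with at least one missing left-to-right connection, so $I=X\cap I_{n-1}$ and $X/I$ has $k$-basis the $G$-braided rook $n$-diagrams with the maximal number $n$ of left-to-right connections.

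Finally I would identify the quotient. Taking $A=X$ in Proposition \ref{group-alg-retract-prop2}(2), the subalgebra $A_{max}$ spanned by the diagrams with $n$ left-to-right connections is isomorphic both to $A/(A\cap I_{n-1})=X/I$ and to a group ring $k[H]$ with $H\leqslant G^n\rtimes B_n$; since $X$ is the full $G$-braided rook algebra, $H$ is all of $G^n\rtimes B_n$ by Proposition \ref{group-alg-retract-prop1}. Checking that the trivial $X$-module restricts to the trivial $k[G^n\rtimes B_n]$-module then closes the chain of isomorphisms. I do not anticipate a genuine obstacle, as all the required machinery has already been developed in this generality; the only point meriting a word of care is that the commuting-idempotent hypotheses of \cite[Theorem 4.1]{Boyde} survive the presence of both crossings and $G$-labels, which they do precisely because $\rho_i$ carries neither a crossing nor a nontrivial label, so the relevant compositions reduce to the unbraided, unlabelled computation.
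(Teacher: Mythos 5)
Your proposal is correct and is exactly the paper's argument: the paper proves Theorem \ref{G-braid-rook-thm} by stating that one follows the method of Theorem \ref{braid-rook-thm} with $G^n\rtimes B_n$ in place of $B_n$, which is precisely what you have done, using Lemma \ref{idempotent-lemma}, \cite[Theorem 4.1]{Boyde}, and Propositions \ref{group-alg-retract-prop1} and \ref{group-alg-retract-prop2} in the same way. Your closing remark about why the commuting-idempotent hypotheses survive the braiding and labelling (since $\rho_i$ carries trivial labels and no crossings) is a sound and welcome elaboration of a point the paper leaves implicit.
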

\begin{proof}
The proof of this statements follows the method Theorem \ref{braid-rook-thm} by replacing $B_n$ with $G^n\rtimes B_n$.
\end{proof}

\subsection{Homological stability}
We can use the theorems from the previous subsection to deduce homological stability for generalized rook algebras.

\begin{cor}
Let $\varepsilon \in k$ be invertible. The morphism
\[\mathrm{Tor}_{q}^{B\mathcal{R}_{n-1}(\varepsilon)}\left(\mathbbm{1},\mathbbm{1}\right) \rightarrow \mathrm{Tor}_{q}^{B\mathcal{R}_{n}(\varepsilon)}\left(\mathbbm{1},\mathbbm{1}\right)\]
induced by the inclusion map $B\mathcal{R}_{n-1}(\varepsilon) \rightarrow B\mathcal{R}_{n}(\varepsilon)$ is an isomorphism for $n\geqslant 2q+1$.
\end{cor}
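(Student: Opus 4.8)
The plan is to reduce the statement to the classical homological stability of the braid groups via the identification established in Theorem \ref{braid-rook-thm}. Applying that theorem at the consecutive values $n-1$ and $n$ yields isomorphisms of graded $k$-modules
\[\mathrm{Tor}_{\star}^{B\mathcal{R}_{n-1}(\varepsilon)}(\mathbbm{1},\mathbbm{1}) \cong H_{\star}(B_{n-1}, k), \qquad \mathrm{Tor}_{\star}^{B\mathcal{R}_{n}(\varepsilon)}(\mathbbm{1},\mathbbm{1}) \cong H_{\star}(B_n, k).\]
The key point is to verify that, under these identifications, the map induced by the algebra inclusion $B\mathcal{R}_{n-1}(\varepsilon) \to B\mathcal{R}_n(\varepsilon)$ corresponds to the map on group homology induced by the standard inclusion of braid groups $B_{n-1} \hookrightarrow B_n$. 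Granting this, the corollary follows at once from Arnold's homological stability theorem for the braid groups \cite{Arnold1}, which gives that $H_q(B_{n-1}, k) \to H_q(B_n, k)$ is an isomorphism in the range $n \geqslant 2q+1$.

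First I would make the comparison of maps precise. The inclusion $B\mathcal{R}_{n-1}(\varepsilon) \to B\mathcal{R}_n(\varepsilon)$ is induced by the extension-of-diagrams operation, which appends a horizontal strand at the bottom of each diagram. Restricted to diagrams with the maximal number of left-to-right connections, this operation sends a braid on $n-1$ strands to the same braid together with one extra, non-crossing strand; this is exactly the standard inclusion $B_{n-1} \hookrightarrow B_n$. I would then check that the retraction isomorphism $B\mathcal{R}_n(\varepsilon)/I \cong k[B_n]$ of Proposition \ref{group-alg-retract-prop2}, together with the quotient by the ideal $I$ generated by the idempotents $\varepsilon^{-1}\rho_i$, is natural with respect to these inclusions, so that the square relating the two algebra quotients to the two group algebras commutes.

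The main obstacle, though it is essentially bookkeeping rather than a genuine difficulty, is confirming this naturality along the chain of isomorphisms produced in the proof of Theorem \ref{braid-rook-thm}. Concretely, one must track the inclusion through the application of \cite[Theorem 4.1]{Boyde} and the passage to the quotient $B\mathcal{R}_n(\varepsilon)/I$, checking that the inclusion carries the ideal $I$ at level $n-1$ into the ideal $I$ at level $n$ and hence descends to the quotients compatibly. Since the idempotents $\rho_i$ and the maximal-connection diagrams are all preserved by appending a trivial horizontal strand, each stage of the identification is compatible with stabilization, and the induced map on $\mathrm{Tor}$ is identified with the braid-group stabilization map. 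With this naturality in hand, invoking \cite{Arnold1} in the stated range completes the argument.
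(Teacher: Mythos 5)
Your proposal is correct and follows essentially the same route as the paper: apply Theorem \ref{braid-rook-thm} at levels $n-1$ and $n$ and invoke Arnold's homological stability for the braid groups \cite{Arnold1}. The naturality check you describe (that the extension-of-diagrams inclusion is compatible with the ideals $I$, the quotient maps, and the identification with $k[B_n]$, so that the induced map on $\mathrm{Tor}$ matches the standard stabilization $B_{n-1}\hookrightarrow B_n$) is exactly the bookkeeping the paper leaves implicit, and spelling it out only strengthens the argument.
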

\begin{proof}
This follows from Theorem \ref{braid-rook-thm} and Arnold's result for the homological stability of the braid groups (originally published as \cite{Arnold1} and available in \cite{Arnold2}).
\end{proof}

\begin{cor}
Let $G$ be a group. Let $k$ be a commutative $G$-ring. Let $\varepsilon \in k$ be invertible. The morphism
\[\mathrm{Tor}_{q}^{G\mathcal{R}_{n-1}(\varepsilon)}\left(\mathbbm{1},\mathbbm{1}\right) \rightarrow \mathrm{Tor}_{q}^{G\mathcal{R}_{n}(\varepsilon)}\left(\mathbbm{1},\mathbbm{1}\right)\]
induced by the inclusion map $G\mathcal{R}_{n-1}(\varepsilon) \rightarrow G\mathcal{R}_{n}(\varepsilon)$ is an isomorphism for $n\geqslant 2q+2$.
\end{cor}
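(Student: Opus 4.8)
The plan is to proceed exactly as in the preceding corollary for braided rook algebras, replacing the braid group $B_n$ by the wreath product $G^n\rtimes\Sigma_n$ and Arnold's stability theorem by the corresponding homological stability result for wreath products. First I would apply Theorem \ref{G-symm-rook-thm} at both levels $n-1$ and $n$ to rewrite the two Tor-groups as the group homologies $H_q(G^{n-1}\rtimes\Sigma_{n-1},k)$ and $H_q(G^n\rtimes\Sigma_n,k)$ respectively. The inclusion $G\mathcal{R}_{n-1}(\varepsilon)\hookrightarrow G\mathcal{R}_n(\varepsilon)$ is the one induced by extension of diagrams, adding a through-strand labelled by $1\in G$; so the content of this step is to verify that under the identifications of Theorem \ref{G-symm-rook-thm} this algebra map corresponds to the standard stabilization homomorphism $G^{n-1}\rtimes\Sigma_{n-1}\hookrightarrow G^n\rtimes\Sigma_n$.

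For that compatibility, I would trace through the retraction of Proposition \ref{group-alg-retract-prop2}: the quotient $G\mathcal{R}_n(\varepsilon)/(G\mathcal{R}_n(\varepsilon)\cap I_{n-1})$ is identified with $k[G^n\rtimes\Sigma_n]$ by sending a $G$-rook diagram with $n$ through-strands to the corresponding element, namely a permutation together with its tuple of edge-labels. Extending a diagram adds one through-strand labelled $1$, which is precisely the image under the subgroup inclusion that fixes the new strand and places the identity in the new $G$-coordinate. Since the isomorphism $\mathrm{Tor}_{\star}^{G\mathcal{R}_n(\varepsilon)}(\mathbbm{1},\mathbbm{1})\cong\mathrm{Tor}_{\star}^{k[G^n\rtimes\Sigma_n]}(\mathbbm{1},\mathbbm{1})$ of Theorem \ref{G-symm-rook-thm} is assembled from this retraction together with an application of \cite[Theorem 4.1]{Boyde}, and both ingredients are natural in $n$, the relevant square commutes and the map on Tor becomes the map $H_q(G^{n-1}\rtimes\Sigma_{n-1},k)\to H_q(G^n\rtimes\Sigma_n,k)$ induced by stabilization.

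Finally, I would invoke homological stability for the wreath products $G^n\rtimes\Sigma_n=G\wr\Sigma_n$: the stabilization map on $H_q$ is an isomorphism once $n\geqslant 2q+2$. This can be taken from the general framework of \cite{RWW}, wreath products being one of their standard examples, and it recovers Nakaoka's symmetric group stability \cite{Nakaoka} when $G$ is trivial. The main obstacle is not the homological algebra, which is identical to the braided case, but rather the bookkeeping of the second paragraph: one must confirm that the labelling conventions in the extension of diagrams and in the isomorphism of Proposition \ref{group-alg-retract-prop1} agree, so that the correct stabilization map, carrying the identity label into the new coordinate, is the one recovered, and that the range quoted from the cited stability theorem is stated in the form $n\geqslant 2q+2$.
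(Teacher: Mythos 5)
Your proposal is correct and follows essentially the same route as the paper: identify both Tor groups with group homology via Theorem \ref{G-symm-rook-thm} (checking the inclusion corresponds to the standard stabilization map) and then invoke homological stability for $G^n\rtimes\Sigma_n$ in the range $n\geqslant 2q+2$. The only difference is the citation: the paper quotes Hatcher and Wahl \cite[Proposition 1.6]{HW} for wreath-product stability rather than the general framework of \cite{RWW}, but both yield the stated range.
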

\begin{proof}
This follows from Theorem \ref{G-symm-rook-thm} together with Hatcher and Wahl's result for the homological stability of the groups $G^n\rtimes \Sigma_n$ \cite[Proposition 1.6]{HW}.
\end{proof}

\begin{cor}
Let $G$ be a group. Let $k$ be a commutative $G$-ring. Let $\varepsilon \in k$ be invertible. The morphism
\[\mathrm{Tor}_{q}^{GB\mathcal{R}_{n-1}(\varepsilon)}\left(\mathbbm{1},\mathbbm{1}\right) \rightarrow \mathrm{Tor}_{q}^{GB\mathcal{R}_{n}(\varepsilon)}\left(\mathbbm{1},\mathbbm{1}\right)\]
induced by the inclusion map $GB\mathcal{R}_{n-1}(\varepsilon) \rightarrow GB\mathcal{R}_{n}(\varepsilon)$ is an isomorphism for $n\geqslant 2q+2$.
\end{cor}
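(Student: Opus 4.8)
The plan is to follow the template of the two preceding corollaries verbatim, replacing the relevant group by $G^n\rtimes B_n$. First I would apply Theorem \ref{G-braid-rook-thm} at levels $n-1$ and $n$ to obtain isomorphisms of graded $k$-modules
\[\mathrm{Tor}_{\star}^{GB\mathcal{R}_{n-1}(\varepsilon)}\left(\mathbbm{1},\mathbbm{1}\right) \cong H_{\star}\left(G^{n-1}\rtimes B_{n-1},k\right), \qquad \mathrm{Tor}_{\star}^{GB\mathcal{R}_{n}(\varepsilon)}\left(\mathbbm{1},\mathbbm{1}\right) \cong H_{\star}\left(G^{n}\rtimes B_{n},k\right).\]
The content that has to be checked is that these identifications are natural with respect to the algebra inclusion $GB\mathcal{R}_{n-1}(\varepsilon)\to GB\mathcal{R}_n(\varepsilon)$. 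Since this inclusion is the diagram-extension map, which adjoins a single horizontal edge labelled by $1\in G$, it carries the maximal-rank retract $k[G^{n-1}\rtimes B_{n-1}]$ of Proposition \ref{group-alg-retract-prop2} into the maximal-rank retract $k[G^{n}\rtimes B_{n}]$; under the canonical isomorphisms of Proposition \ref{group-alg-retract-prop1} this should be precisely the standard stabilization homomorphism $G^{n-1}\rtimes B_{n-1}\hookrightarrow G^{n}\rtimes B_{n}$, and the trivial modules restrict compatibly. Thus the map in the statement is identified with the stabilization map on group homology.

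It then remains to quote homological stability for the family $G^n\rtimes B_n$, where $B_n$ acts on $G^n$ by permuting the factors through the surjection $B_n\to\Sigma_n$. This is governed by the same Hatcher--Wahl machinery \cite[Proposition 1.6]{HW} already invoked for $G^n\rtimes\Sigma_n$ in the previous corollary, and it delivers the stable range $n\geqslant 2q+2$ claimed here. So the entire proof collapses to a one-line citation of Theorem \ref{G-braid-rook-thm} together with that stability result.

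The main obstacle is the naturality bookkeeping in the first step: one must confirm that the over/under-crossing data on the strands does not interfere with the identification of the stabilization map, which is exactly why it matters that the edge adjoined in the extension construction is horizontal, uncrossed, and labelled by the identity, so that it commutes with every crossing already present and contributes a trivial factor under the isomorphism of Proposition \ref{group-alg-retract-prop1}. A secondary point to pin down is that the Hatcher--Wahl framework really does apply to the braided semi-direct product $G^n\rtimes B_n$ with the stated range, rather than only to the symmetric-group version; everything else is a direct transcription of the arguments already recorded for the braided rook algebra and the $G$-rook algebra.
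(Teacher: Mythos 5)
Your proposal takes essentially the same route as the paper, whose entire proof is to cite Theorem \ref{G-braid-rook-thm} together with Hatcher--Wahl's homological stability result for the groups $G^n\rtimes B_n$; the naturality bookkeeping you describe is exactly what that one-line citation implicitly relies on. One small correction: the secondary point you flag is real but is settled by citing the right statement --- the braided semi-direct product case is \cite[Proposition 1.7]{HW} (which is what the paper uses here), not \cite[Proposition 1.6]{HW} (the version for $G^n\rtimes\Sigma_n$ used in the previous corollary), and it gives the stated range $n\geqslant 2q+2$.
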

\begin{proof}
This follows from Theorem \ref{G-braid-rook-thm} together with Hatcher and Wahl's result for the homological stability of the groups $G^n\rtimes B_n$ \cite[Proposition 1.7]{HW}.
\end{proof}

\section{Homology of generalized Brauer algebras}
\label{gen-brauer-sec}

In this section we will study the homology of generalized Brauer algebras and generalized Temperley-Lieb algebras when the parameter $\delta$ is invertible. We will show that the homology of a braided Brauer algebra coincides with the homology of the braid groups. When $G$ is an abelian group, we will show that the homology of the $G$-Brauer algebras and $G$-braided Brauer algebras coincide with the group homology of the groups $G^n\rtimes \Sigma_n$ and $G^n\rtimes B_n$ respectively. We deduce homological stability results for these generalized Brauer algebras. We show that the homology of the $G$-Temperley-Lieb algebras coincides with the group homology of $G^n$. We can see Theorems \ref{G-brauer-thm}, \ref{braided-brauer-thm} and \ref{G-braided-brauer-thm} as equivariant and braided versions of \cite[Theorem A]{BHP}. In particular, if we take $G$ to be the trivial group in Theorem \ref{G-brauer-thm}, we recover \cite[Theorem A]{BHP} and \cite[Theorem 6.5]{Boyde}.

\subsection{Homology of generalized Brauer algebras}

\begin{thm}
\label{G-brauer-thm}
Let $G$ be an abelian group. If $\delta \in k$ is invertible, there is an isomorphism of graded $k$-modules
\[\mathrm{Tor}_{\star}^{G\mathcal{B}_n(\delta)}\left(\mathbbm{1} , \mathbbm{1}\right) \cong \mathrm{Tor}_{\star}^{k[G^n\rtimes \Sigma_n]}\left(\mathbbm{1},\mathbbm{1}\right) = H_{\star}\left(G^n\rtimes \Sigma_n , k\right).\]
\end{thm}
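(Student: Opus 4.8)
The plan is to realise $G\mathcal{B}_n(\delta)$ as the bottom term of the chain of $\mathrm{Tor}$-isomorphisms supplied by Proposition \ref{equiv-tech-prop}, whose top term is the group-algebra retract $k[G^n\rtimes\Sigma_n]$. Concretely, set $A = G\mathcal{B}_n(\delta)$. As a $k$-module it is free on the $G$-Brauer $n$-diagrams, which form a subset of the $G$-rook-Brauer $n$-diagrams, so the first hypothesis of Proposition \ref{equiv-tech-prop} holds. I would apply the proposition with $l=0$ and $m=n-1$. Since $I_{-1}=0$, the bottom term of the resulting chain is $A/(A\cap I_{-1})=A$ itself, while the top term is $A/(A\cap I_{n-1})=A_{max}$, the $k$-span of the $G$-Brauer diagrams with the maximal number $n$ of left-to-right connections.

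The heart of the argument is verifying the idempotent hypothesis, and this is exactly where invertibility of $\delta$ is used. Fix $0\leqslant i \leqslant n-1$ and a link state $p\in GP_i$ for which $A$ contains a diagram with right link state $p$. Because every $G$-Brauer diagram has no missing edges, such a $p$ has no missing edges and $i\equiv n \pmod 2$, so the diagram $f_p$ of Definition \ref{fp-diag-defn} is defined and $\frac{1}{2}(n-i)$ is a non-negative integer. I would set
\[ e_p = \delta^{-\frac{1}{2}(n-i)} f_p. \]
Lemma \ref{equiv-tech-lem} gives $y\cdot f_p = \delta^{\frac{1}{2}(n-i)}y$ for every $y\in A\cap GJ_p$. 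Taking $y=f_p$ shows $e_p^2=e_p$, so $e_p$ is idempotent; the same identity gives $y = y\cdot e_p$ for all $y\in A\cap GJ_p$, whence $A\cap GJ_p \subseteq A\cdot e_p$, while $A\cdot e_p \subseteq A\cap GJ_p$ since $GJ_p$ is a left ideal containing $e_p$. Thus $A\cdot e_p = A\cap GJ_p$, as required. For $i$ of the opposite parity to $n$, no link state in $GP_i$ admits a diagram of $A$, so the hypothesis is vacuous there.

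With the hypotheses in place, Proposition \ref{equiv-tech-prop} yields
\[ \mathrm{Tor}_{\star}^{G\mathcal{B}_n(\delta)}\left(\mathbbm{1},\mathbbm{1}\right) \cong \mathrm{Tor}_{\star}^{A_{max}}\left(\mathbbm{1},\mathbbm{1}\right). \]
I would then invoke Proposition \ref{group-alg-retract-prop2}, which identifies $A_{max}=A/(A\cap I_{n-1})$ with the group algebra $k[G^n\rtimes\Sigma_n]$; note that $A$ does contain a diagram with $n$ left-to-right connections, for instance the identity diagram with all labels equal to $1\in G$, so the hypotheses of that proposition are met. As in the proof of Theorem \ref{braid-rook-thm}, one checks that the trivial module $\mathbbm{1}$ for $G\mathcal{B}_n(\delta)$ restricts to the trivial module for $k[G^n\rtimes\Sigma_n]$, since the maximal diagrams act as $1$ and are precisely the group elements. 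Combining the two displays gives the claimed isomorphism with $H_{\star}(G^n\rtimes\Sigma_n, k)$.

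The main obstacle is the idempotent verification, and in particular being sure that $f_p$ acts as the single scalar $\delta^{\frac{1}{2}(n-i)}$ on the whole of $A\cap GJ_p$. This is where the labelling convention of Definition \ref{fp-diag-defn}—pairing a right-to-right label $g$ with the label $g^{-1}$ on the mirror left-to-left connection—does the work, forcing every loop formed in the composite $y\cdot f_p$ to carry the identity label so that no stray group elements contaminate the prefactor; the abelianness of $G$ is exactly what keeps this loop-label bookkeeping consistent and makes the composition well defined.
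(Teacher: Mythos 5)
Your proposal is correct and is precisely the argument the paper intends: the paper's proof is a citation to the method of Boyde's Theorem 6.5, with Proposition \ref{equiv-tech-prop} in place of Boyde's Theorem 1.7, $f_p$ in place of $d_p$, and Lemma \ref{equiv-tech-lem} in place of Boyde's Lemma 6.1 --- exactly the three ingredients you deploy, including the key step $e_p = \delta^{-\frac{1}{2}(n-i)}f_p$ and the identification of $A/(A\cap I_{n-1})$ with $k[G^n\rtimes\Sigma_n]$ via Proposition \ref{group-alg-retract-prop2}. You have simply unpacked the citation into a full argument, with the parity observation and the two inclusions $A\cap GJ_p \subseteq A\cdot e_p \subseteq A\cap GJ_p$ carried out correctly.
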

\begin{proof}
This follows the same method as \cite[Theorem 6.5]{Boyde}, with Proposition \ref{equiv-tech-prop} playing the role of \cite[Theorem 1.7]{Boyde}, the diagram $f_p$ from Definition \ref{fp-diag-defn} playing the role of $d_p$ and Lemma \ref{equiv-tech-lem} playing the role of \cite[Lemma 6.1]{Boyde}.
\end{proof}

\begin{thm}
\label{braided-brauer-thm}
If $\delta \in k$ is invertible, there is an isomorphism of graded $k$-modules
\[\mathrm{Tor}_{\star}^{B\mathcal{B}_n(\delta)}\left(\mathbbm{1} , \mathbbm{1}\right) \cong \mathrm{Tor}_{\star}^{k[B_n]}\left(\mathbbm{1},\mathbbm{1}\right) = H_{\star}\left(B_n , k\right).\]
\end{thm}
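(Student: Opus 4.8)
The plan is to follow the method of \cite[Theorem 6.5]{Boyde}, in direct analogy with the proof of Theorem \ref{G-brauer-thm}, but replacing the equivariant technical results with their braided counterparts. Write $A = B\mathcal{B}_n(\delta)$, regarded as a subalgebra of $B\mathcal{RB}_n(\delta,\varepsilon)$ that is free on the braided Brauer $n$-diagrams. The algebra is filtered by the two-sided ideals $A\cap I_i$, where $I_i$ is spanned by diagrams with at most $i$ left-to-right connections. Since a braided Brauer diagram has no missing edges, the number of defects $n-i$ is always even, so the only nonzero subquotients occur for $i\equiv n\pmod 2$; let $l$ be the minimal such value (equal to $0$ or $1$ according to the parity of $n$). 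Because no braided Brauer diagram has fewer than $l$ through-strands we have $A\cap I_{l-1}=0$, so $A/(A\cap I_{l-1})=A$, while by Proposition \ref{group-alg-retract-prop2} the top quotient $A/(A\cap I_{n-1})$ is the group algebra $k[B_n]$.

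First I would construct the idempotents required by Proposition \ref{braided-tech-prop}. Given a link state $p\in BP_i$ with no missing edges (automatic for link states coming from braided Brauer diagrams), Definition \ref{dp-diag-defn} supplies the diagram $d_p\in A$, and Lemma \ref{braided-tech-lem} gives $y\cdot d_p = \delta^{\frac{1}{2}(n-i)}y$ for every $y\in A\cap BJ_p$. As $\delta$ is invertible, set $e_p = \delta^{-\frac{1}{2}(n-i)}d_p$. Applying the lemma to $y=d_p$ (which lies in $A\cap BJ_p$, its right link state being $p$) shows $e_p^2=e_p$, so $e_p$ is idempotent. The inclusion $A\cdot e_p\subseteq A\cap BJ_p$ holds because $BJ_p$ is a left ideal containing $e_p$, and the reverse inclusion follows from $y=y\cdot e_p$ for $y\in A\cap BJ_p$; hence $A\cdot e_p = A\cap BJ_p$ as left ideals of $A$.

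With the idempotents in place, the hypotheses of Proposition \ref{braided-tech-prop} are met for $A$ in the range $l\leqslant i \leqslant n-1$, the idempotent condition being vacuous for the values of $i$ of the wrong parity. The proposition then yields
\[\mathrm{Tor}_{\star}^{A}\left(\mathbbm{1},\mathbbm{1}\right) = \mathrm{Tor}_{\star}^{A/(A\cap I_{l-1})}\left(\mathbbm{1},\mathbbm{1}\right) \cong \cdots \cong \mathrm{Tor}_{\star}^{A/(A\cap I_{n-1})}\left(\mathbbm{1},\mathbbm{1}\right) \cong \mathrm{Tor}_{\star}^{k[B_n]}\left(\mathbbm{1},\mathbbm{1}\right),\]
where the last identification uses Proposition \ref{group-alg-retract-prop2} together with the observation that the trivial module $\mathbbm{1}$ for $A$ restricts to the trivial module for $k[B_n]$. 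This establishes the claimed isomorphism.

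The step demanding the most care is the verification that $A\cdot e_p = A\cap BJ_p$ as left ideals of $A$ itself, rather than of the ambient algebra $B\mathcal{RB}_n(\delta,\varepsilon)$, since this is precisely the hypothesis that allows Proposition \ref{braided-tech-prop} to apply; once Lemma \ref{braided-tech-lem} is available the argument is formal. I would also double-check the parity bookkeeping, so that the bottom of the filtration is genuinely the whole algebra $A$ and the cases of the wrong parity are correctly dismissed as vacuous.
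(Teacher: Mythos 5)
Your proposal is correct and takes essentially the same approach as the paper: the paper's proof simply invokes the method of \cite[Theorem 6.5]{Boyde} with Proposition \ref{braided-tech-prop}, the diagram $d_p$ of Definition \ref{dp-diag-defn}, and Lemma \ref{braided-tech-lem} playing exactly the roles you describe. Your write-up is a faithful unpacking of that argument --- the idempotents $\delta^{-\frac{1}{2}(n-i)}d_p$, the equality of left ideals $A\cdot e_p = A\cap BJ_p$, the parity bookkeeping at the bottom of the filtration, and the identification of the top quotient with $k[B_n]$ via Proposition \ref{group-alg-retract-prop2} --- and it is sound.
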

\begin{proof}
This follows the same method as \cite[Theorem 6.5]{Boyde}, with Proposition \ref{braided-tech-prop} playing the role of \cite[Theorem 1.7]{Boyde}, the diagram $d_p$ from Definition \ref{dp-diag-defn} playing the role of Boyde's $d_p$, and Lemma \ref{braided-tech-lem} playing the role of \cite[Lemma 6.1]{Boyde}.
\end{proof}

\begin{thm}
\label{G-braided-brauer-thm}
Let $G$ be an abelian group. If $\delta \in k$ is invertible, there is an isomorphism of graded $k$-modules
\[\mathrm{Tor}_{\star}^{GB\mathcal{B}_n(\delta)}\left(\mathbbm{1} , \mathbbm{1}\right) \cong \mathrm{Tor}_{\star}^{k[G^n\rtimes B_n]}\left(\mathbbm{1},\mathbbm{1}\right) = H_{\star}\left(G^n\rtimes B_n , k\right).\]
\end{thm}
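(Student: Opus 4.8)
The plan is to follow the template established in the proofs of Theorems \ref{G-brauer-thm} and \ref{braided-brauer-thm}, combining the collapsing result Proposition \ref{G-braided-tech-prop} with the group-algebra retract of Proposition \ref{group-alg-retract-prop2}. Write $X = GB\mathcal{B}_n(\delta)$ and recall the filtration by the ideals $X \cap I_i$, where $I_i$ is spanned by diagrams with at most $i$ left-to-right connections. Since $G$-braided Brauer diagrams have no missing edges, every such diagram has a number of left-to-right connections congruent to $n$ modulo $2$, so the interesting values of $i$ are $n, n-2, \ldots$; the top stratum $X/(X\cap I_{n-1})$ consists of the diagrams with the maximal $n$ left-to-right connections.

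First I would verify the hypothesis of Proposition \ref{G-braided-tech-prop} for $A = X$ over the range $0 \leqslant i \leqslant n-1$. For a link state $p \in GBP_i$ of the wrong parity, or one carrying a missing edge, $X$ contains no diagram with right link state $p$ (Brauer diagrams having no missing edges), so the condition is vacuous. For the remaining $p$ --- those with no missing edges --- I would take the diagram $h_p$ of Definition \ref{hp-diag-defn} and set $e_p = \delta^{-\frac{1}{2}(n-i)} h_p$, which makes sense because $\delta$ is invertible. Since $h_p \in X \cap GBJ_p$, Lemma \ref{G-braided-tech-lem} gives $h_p \cdot h_p = \delta^{\frac{1}{2}(n-i)} h_p$, so $e_p$ is idempotent; the same lemma gives $y \cdot e_p = y$ for every $y \in X \cap GBJ_p$, whence $X \cap GBJ_p \subseteq X \cdot e_p$, while the reverse inclusion is immediate since $e_p \in GBJ_p$ and $GBJ_p$ is a left ideal. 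This yields the required equality $X \cdot e_p = X \cap GBJ_p$.

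With the hypothesis in hand, Proposition \ref{G-braided-tech-prop} (with $l = 0$ and $m = n-1$) supplies a chain of isomorphisms collapsing the whole filtration, so that $\mathrm{Tor}_{\star}^{X}(\mathbbm{1},\mathbbm{1}) \cong \mathrm{Tor}_{\star}^{X/(X\cap I_{n-1})}(\mathbbm{1},\mathbbm{1})$. Proposition \ref{group-alg-retract-prop2} then identifies $X/(X\cap I_{n-1})$ with the group algebra $k[G^n\rtimes B_n]$, the relevant subgroup $H$ being all of $G^n\rtimes B_n$ since $X$ contains every diagram with $n$ left-to-right connections. I would finish by checking that the trivial module $\mathbbm{1}$ for $X$ restricts to the trivial module for $k[G^n\rtimes B_n]$ under this identification --- each maximal diagram corresponds to a group element and acts as $1$ --- giving the desired isomorphism.

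The main obstacle is the middle step: producing the idempotents $e_p$ and proving the ideal equality $X \cdot e_p = X \cap GBJ_p$, where invertibility of $\delta$ is essential and where one must be sure that the $G$-labels and over/under-crossings recorded by the braided $G$-splices interact correctly under composition. This is exactly the content packaged in Lemma \ref{G-braided-tech-lem} (whose labelling convention in forming $h_p$ forces the loops produced in $y \cdot h_p$ to carry the identity label), so once that lemma is invoked the remaining bookkeeping --- parities, the vacuous cases, and the matching of trivial modules --- is routine.
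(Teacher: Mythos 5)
Your proposal is correct and follows essentially the same route as the paper, whose proof simply invokes the method of Boyde's Theorem 6.5 with Proposition \ref{G-braided-tech-prop} in place of Boyde's Theorem 1.7, the diagram $h_p$ of Definition \ref{hp-diag-defn} in place of $d_p$, and Lemma \ref{G-braided-tech-lem} in place of Boyde's Lemma 6.1. Your write-up just makes explicit the details (the idempotents $e_p = \delta^{-\frac{1}{2}(n-i)}h_p$, the ideal equality, the collapse of the filtration, and the identification of the top quotient with $k[G^n\rtimes B_n]$) that the paper leaves to the cited reference.
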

\begin{proof}
This follows the same method as \cite[Theorem 6.5]{Boyde}, with Proposition \ref{G-braided-tech-prop} playing the role of \cite[Theorem 1.7]{Boyde}, the diagram $h_p$ from Definition \ref{hp-diag-defn} playing the role of $d_p$ and Lemma \ref{G-braided-tech-lem} playing the role of \cite[Lemma 6.1]{Boyde}.
\end{proof}

\subsection{Homological stability}
We can use the theorems from the previous subsection to deduce homological stability for generalized Brauer algebras.

\begin{cor}
\label{G-brauer-cor}
Let $G$ be an abelian group. Let $\delta \in k$ be invertible. The map
\[\mathrm{Tor}_{q}^{G\mathcal{B}_{n-1}(\delta)}\left(\mathbbm{1} , \mathbbm{1}\right)\rightarrow \mathrm{Tor}_{q}^{G\mathcal{B}_n(\delta)}\left(\mathbbm{1} , \mathbbm{1}\right)\]
induced by the inclusion $G\mathcal{B}_{n-1}(\delta) \rightarrow G\mathcal{B}_{n}(\delta)$ is an isomorphism for $n\geqslant 2q+2$.
\end{cor}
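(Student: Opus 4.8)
The plan is to deduce this corollary from Theorem \ref{G-brauer-thm} together with the known homological stability of the wreath products $G^n\rtimes\Sigma_n = G\wr\Sigma_n$, in exact parallel with the rook-algebra stability corollaries proved above. Theorem \ref{G-brauer-thm} supplies, for every $n$, an isomorphism of graded $k$-modules
\[\mathrm{Tor}_{\star}^{G\mathcal{B}_n(\delta)}\left(\mathbbm{1},\mathbbm{1}\right)\cong H_{\star}\left(G^n\rtimes\Sigma_n, k\right),\]
so the real content of the corollary is that these isomorphisms are compatible with the stabilization maps on both sides and that the group-homology side stabilizes in the stated range.

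The first thing I would pin down is the target of the stabilization. The inclusion $G\mathcal{B}_{n-1}(\delta)\to G\mathcal{B}_n(\delta)$ is induced by the extension of diagrams that adjoins a new node to each column joined by a horizontal edge labelled $1\in G$. Under the identification of the maximal-rank diagrams with $G^n\rtimes\Sigma_n$ from Proposition \ref{group-alg-retract-prop2}, this extension carries a maximal-rank diagram to a maximal-rank diagram and visibly corresponds to the standard inclusion $G^{n-1}\rtimes\Sigma_{n-1}\hookrightarrow G^n\rtimes\Sigma_n$ that adjoins a trivial $G$-factor and extends permutations by fixing the new point. Hence I would claim that the square comparing the algebra inclusion with this group inclusion, via the isomorphisms of Theorem \ref{G-brauer-thm}, commutes. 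With naturality in hand, I would invoke Hatcher and Wahl's homological stability for the wreath products, \cite[Proposition 1.6]{HW}, which gives that $H_q(G^{n-1}\rtimes\Sigma_{n-1},k)\to H_q(G^n\rtimes\Sigma_n,k)$ is an isomorphism for $n\geqslant 2q+2$; transporting this across the isomorphisms yields the claim.

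The main obstacle is precisely the naturality verification. Theorem \ref{G-brauer-thm} is proved through a zigzag—the successive comparisons of $\mathrm{Tor}^{A/(A\cap I_j)}\left(\mathbbm{1},\mathbbm{1}\right)$ furnished by Proposition \ref{equiv-tech-prop}, followed by the group-algebra retract of Proposition \ref{group-alg-retract-prop2}—so the isomorphism is not given by a single map, and its compatibility with stabilization is not automatic. I would therefore need to check that the extension-of-diagrams map commutes with each intermediate step: in particular that it respects the ideals $I_j$, the idempotents $e_p$, and the diagrams $f_p$ of Definition \ref{fp-diag-defn}. This holds because adjoining a horizontal edge labelled $1$ commutes with $G$-splices, deletions and label replacements, and leaves the labelling convention defining $f_p$ (Lemma \ref{equiv-tech-lem}) undisturbed. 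Since this is exactly the situation already handled for the rook algebras, where the analogous corollary is deduced in a single line, I expect the bookkeeping to go through without surprises, and the proof to reduce to a one-line citation of Theorem \ref{G-brauer-thm} and \cite[Proposition 1.6]{HW}.
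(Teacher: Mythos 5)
Your proposal is correct and follows exactly the paper's own route: the paper deduces this corollary in one line from Theorem \ref{G-brauer-thm} together with Hatcher and Wahl's stability result \cite[Proposition 1.6]{HW}. Your additional discussion of naturality (compatibility of the zigzag of isomorphisms with the stabilization maps) is a legitimate point that the paper leaves implicit, but it is an elaboration of the same argument rather than a different one.
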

\begin{proof}
This follows from Theorem \ref{G-brauer-thm} together with Hatcher and Wahl's result for the homological stability of the groups $G^n\rtimes \Sigma_n$ \cite[Proposition 1.6]{HW}.
\end{proof}

\begin{cor}
\label{braided-brauer-cor}
Let $\delta \in k$ be invertible. The map
\[\mathrm{Tor}_{q}^{B\mathcal{B}_{n-1}(\delta)}\left(\mathbbm{1} , \mathbbm{1}\right)\rightarrow \mathrm{Tor}_{q}^{B\mathcal{B}_n(\delta)}\left(\mathbbm{1} , \mathbbm{1}\right)\]
induced by the inclusion $B\mathcal{B}_{n-1}(\delta) \rightarrow B\mathcal{B}_{n}(\delta)$ is an isomorphism for $n\geqslant 2q+1$.
\end{cor}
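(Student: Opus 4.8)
The plan is to transport the statement through the identification established in Theorem \ref{braided-brauer-thm} and then invoke the classical homological stability of the braid groups, exactly as in the proof of Corollary \ref{G-brauer-cor} but with Arnold's range in place of the Hatcher--Wahl range. First I would apply Theorem \ref{braided-brauer-thm} at levels $n-1$ and $n$ to obtain isomorphisms
\[
\mathrm{Tor}_{q}^{B\mathcal{B}_{n-1}(\delta)}\left(\mathbbm{1},\mathbbm{1}\right) \cong H_q\left(B_{n-1}, k\right), \qquad \mathrm{Tor}_{q}^{B\mathcal{B}_{n}(\delta)}\left(\mathbbm{1},\mathbbm{1}\right) \cong H_q\left(B_{n}, k\right).
\]

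The key remaining point is naturality: I would verify that, under these isomorphisms, the map induced by the algebra inclusion $B\mathcal{B}_{n-1}(\delta) \rightarrow B\mathcal{B}_n(\delta)$ corresponds to the map $H_q(B_{n-1},k) \rightarrow H_q(B_n,k)$ induced by the standard inclusion $B_{n-1} \hookrightarrow B_n$. This hinges on the fact that the isomorphism of Theorem \ref{braided-brauer-thm} is assembled from two compatible pieces: the retraction of $B\mathcal{B}_n(\delta)$ onto its top group-algebra layer $k[B_n]$ (Proposition \ref{group-alg-retract-prop2}), and the layer-by-layer collapse of the lower ideals supplied by Proposition \ref{braided-tech-prop}. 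Both pieces are compatible with the operation of adjoining a horizontal left-to-right strand at the bottom of a diagram, which is precisely how the algebra inclusion extends an $(n-1)$-diagram to an $n$-diagram and how the stabilization $B_{n-1}\hookrightarrow B_n$ is realized pictorially; hence the relevant square commutes.

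Finally I would apply Arnold's homological stability theorem for the braid groups (\cite{Arnold1}, \cite{Arnold2}), which gives that the stabilization map $H_q(B_{n-1},k) \rightarrow H_q(B_n,k)$ is an isomorphism for $n\geqslant 2q+1$, yielding the stated range. The main obstacle is the naturality check of the second step: one must track the stabilization map through the chain of isomorphisms in Proposition \ref{braided-tech-prop}, confirming that the idempotents $e_p$ and the induced collapse of ideals can be chosen coherently across consecutive values of $n$, so that the identification with $k[B_n]$ is genuinely functorial in the bottom-strand inclusion rather than merely an abstract isomorphism at each fixed $n$.
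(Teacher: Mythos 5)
Your proposal is correct and follows essentially the same route as the paper: the paper's proof is a one-line citation of Theorem \ref{braided-brauer-thm} together with Arnold's homological stability for the braid groups. The naturality of the identification under the bottom-strand inclusion, which you rightly flag as the point requiring verification, is left implicit in the paper (as it is in the analogous arguments of Boyde and of Boyd--Hepworth--Patzt), so your attempt is if anything more scrupulous than the published proof.
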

\begin{proof}
This follows from Theorem \ref{braided-brauer-thm} together with Arnold's result for the homological stability of the braid groups (\cite{Arnold1} and \cite{Arnold2}).
\end{proof}

\begin{cor}
\label{G-braided-brauer-cor}
Let $G$ be an abelian group. Let $\delta \in k$ be invertible. The map
\[\mathrm{Tor}_{q}^{GB\mathcal{B}_{n-1}(\delta)}\left(\mathbbm{1} , \mathbbm{1}\right)\rightarrow \mathrm{Tor}_{q}^{GB\mathcal{B}_n(\delta)}\left(\mathbbm{1} , \mathbbm{1}\right)\]
induced by the inclusion $GB\mathcal{B}_{n-1}(\delta) \rightarrow GB\mathcal{B}_{n}(\delta)$ is an isomorphism for $n\geqslant 2q+2$.
\end{cor}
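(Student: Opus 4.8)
The plan is to reduce the statement to a homological stability result for the family of semi-direct products $G^n\rtimes B_n$, exactly in the spirit of the proofs of Corollaries \ref{G-brauer-cor} and \ref{braided-brauer-cor}. First I would invoke Theorem \ref{G-braided-brauer-thm} to identify, for each $n$, the graded $k$-module $\mathrm{Tor}_{\star}^{GB\mathcal{B}_n(\delta)}\left(\mathbbm{1},\mathbbm{1}\right)$ with the group homology $H_{\star}\left(G^n\rtimes B_n,k\right)$, and likewise at stage $n-1$. This converts the question about the algebra inclusion into a question about a map of group homologies.

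Second, I would verify that under the identification of Theorem \ref{G-braided-brauer-thm}, the map induced by the algebra inclusion $GB\mathcal{B}_{n-1}(\delta)\hookrightarrow GB\mathcal{B}_n(\delta)$ corresponds to the map on homology induced by the standard inclusion $G^{n-1}\rtimes B_{n-1}\hookrightarrow G^n\rtimes B_n$ that adds a trivial $G$-factor and an extra, non-crossing strand. Recall that the algebra inclusion is implemented on diagrams by extension, that is, by adjoining a horizontal strand labelled $1\in G$ (as in Subsection on extensions of diagrams). Because such an extension takes $G$-braided Brauer $(n-1)$-diagrams with $n-1$ left-to-right connections to $G$-braided Brauer $n$-diagrams with $n$ left-to-right connections, and does so compatibly with the group structure of Proposition \ref{group-alg-retract-prop1}, it is compatible with the retraction $A_{max}\cong k[G^n\rtimes B_n]$ of Proposition \ref{group-alg-retract-prop2}; and since extension by a $1$-labelled horizontal strand commutes with braided $G$-splices, deletions and label replacements, it is also compatible with the chain of $\mathrm{Tor}$-isomorphisms supplied by Proposition \ref{G-braided-tech-prop}. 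Assembling these compatibilities shows that the relevant naturality square commutes.

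Finally, I would apply Hatcher and Wahl's homological stability theorem for the groups $G^n\rtimes B_n$, namely \cite[Proposition 1.7]{HW}, which gives that the stabilization map $H_q\left(G^{n-1}\rtimes B_{n-1},k\right)\to H_q\left(G^n\rtimes B_n,k\right)$ is an isomorphism once $n\geqslant 2q+2$. Combined with the naturality established in the previous step, this yields the claimed isomorphism in the same range, matching the stability range already obtained for the $G$-braided rook algebras via Theorem \ref{G-braid-rook-thm}.

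The main obstacle I anticipate is precisely the naturality check in the second step. The isomorphism of Theorem \ref{G-braided-brauer-thm} is not a single canonical map but is assembled from the group-algebra retraction together with the telescope of $\mathrm{Tor}$-isomorphisms; abstract agreement of the two sides at each $n$ is therefore not enough, and one must track the stabilization map compatibly through every intermediate stage. Once this bookkeeping is carried out, the corollary is immediate from \cite[Proposition 1.7]{HW}.
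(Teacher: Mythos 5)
Your proposal is correct and follows essentially the same route as the paper: the paper's proof simply combines Theorem \ref{G-braided-brauer-thm} with Hatcher--Wahl's stability result \cite[Proposition 1.7]{HW} for the groups $G^n\rtimes B_n$. Your additional verification that the algebra inclusion corresponds to the group-theoretic stabilization map under the identification is a worthwhile naturality check that the paper leaves implicit, but it does not change the argument.
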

\begin{proof}
This follows from Theorem \ref{G-braided-brauer-thm} together with Hatcher and Wahl's result for the homological stability of the groups $G^n\rtimes B_n$ \cite[Proposition 1.7]{HW}. 
\end{proof}

\subsection{Homology of generalized Temperley-Lieb algebras}

\begin{thm}
\label{G-temp-lieb-thm}
Let $G$ be an abelian group. If $\delta \in k$ is invertible, then there is an isomorphism of graded $k$-modules
\[\mathrm{Tor}_{\star}^{G\mathcal{TL}_n(\delta)}\left(\mathbbm{1} , \mathbbm{1}\right) \cong \mathrm{Tor}_{\star}^{k[G^n]}\left(\mathbbm{1},\mathbbm{1}\right).\]
\end{thm}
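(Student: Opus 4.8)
The plan is to run the argument behind Theorem~\ref{G-brauer-thm} verbatim, but inside the planar subalgebra, the only genuine differences being a planarity check on the auxiliary diagrams and the fact that the group sitting at the top collapses from $G^n\rtimes\Sigma_n$ to $G^n$. Write $A=G\mathcal{TL}_n(\delta)$, regarded as a subalgebra of $G\mathcal{RB}_n(\delta,\varepsilon)$ that is free as a $k$-module on the $G$-Temperley-Lieb $n$-diagrams. The goal is to apply Proposition~\ref{equiv-tech-prop} to $A$ with $l=0$ and $m=n-1$, and then to identify the resulting top quotient.

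First I would verify the idempotent hypothesis of Proposition~\ref{equiv-tech-prop} for every planar right link state $p\in GP_i$ that actually occurs in $A$ (for other $p$ the condition is vacuous). Since a $G$-Temperley-Lieb diagram has no missing edges, each such $p$ is planar with no missing edges, so $f_p$ of Definition~\ref{fp-diag-defn} is defined. The key point is that when $p$ is planar, joining its defects across by order-preserving left-to-right strands and mirroring its cups on the two sides (with inverse labels) cannot create a crossing, so $f_p$ is again planar with no missing edges; hence $f_p\in A$. Because every $G$-Temperley-Lieb diagram is in particular a $G$-Brauer diagram, Lemma~\ref{equiv-tech-lem} applies unchanged and gives $y\cdot f_p=\delta^{\frac{1}{2}(n-i)}y$ for all $y\in A\cap GJ_p$. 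With $\delta$ invertible, set $e_p=\delta^{-\frac{1}{2}(n-i)}f_p$; then $e_p$ is idempotent, $A\cdot e_p\subseteq A\cap GJ_p$ because left multiplication modifies the right link state only by $G$-splices, deletions and label replacements, and conversely any $y\in A\cap GJ_p$ equals $y\cdot e_p$. Thus $A\cdot e_p=A\cap GJ_p$, as required.

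Applying Proposition~\ref{equiv-tech-prop} and using $I_{-1}=0$ then yields
\[
\mathrm{Tor}_{\star}^{A}\left(\mathbbm{1},\mathbbm{1}\right)\cong\mathrm{Tor}_{\star}^{A/(A\cap I_{n-1})}\left(\mathbbm{1},\mathbbm{1}\right),
\]
and it remains to identify the quotient $A/(A\cap I_{n-1})$. By Proposition~\ref{group-alg-retract-prop2} this is the span $A_{max}$ of the diagrams in $A$ with $n$ left-to-right connections, and such a diagram has no missing edges, no left-to-left or right-to-right connections, and is planar. Planarity forces the underlying connection pattern to be the identity, since interchanging any two strands would introduce a crossing; hence these diagrams are exactly the $n$ horizontal strands carrying an arbitrary $G$-label on each, and composition multiplies labels strandwise. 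Therefore the subgroup $H\leqslant G^n\rtimes\Sigma_n$ supplied by Proposition~\ref{group-alg-retract-prop2} is precisely $G^n$, giving $A_{max}\cong k[G^n]$. Since the trivial module $\mathbbm{1}$ for $A$ restricts to the trivial module for $k[G^n]$, this yields the claimed isomorphism.

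I expect the main obstacle to be the planarity bookkeeping, which is exactly what distinguishes this case from Theorem~\ref{G-brauer-thm}: one must confirm both that $f_p$ lands in the Temperley-Lieb subalgebra rather than merely in the Brauer algebra, and that planarity genuinely eliminates the $\Sigma_n$ factor at the top so that $H=G^n$. Everything else is a direct transcription of the proof of Theorem~\ref{G-brauer-thm}.
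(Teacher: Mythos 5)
Your proposal is correct and follows essentially the same route as the paper: the paper's proof simply invokes the method of Boyde's Theorem 6.6, substituting Proposition \ref{equiv-tech-prop} for Boyde's Theorem 1.7, the diagram $f_p$ (rescaled by $\delta^{-\frac{1}{2}(n-i)}$ to give the idempotents) for $d_p$, and the identification $G\mathcal{TL}_n/(G\mathcal{TL}_n\cap I_{n-1})\cong k[G^n]$ at the top. Your write-up just makes explicit the two points the paper leaves implicit, namely the planarity of $f_p$ for planar $p$ and the collapse of the top quotient to $k[G^n]$.
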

\begin{proof}
This follows the same method of proof as \cite[Theorem 6.6]{Boyde}, using Proposition \ref{equiv-tech-prop} in place of \cite[Theorem 1.7]{Boyde}, $f_p$ in place of $d_p$ and noting that we have an isomorphism
\[\frac{G\mathcal{TL}_n}{G\mathcal{TL}_n \cap I_{n-1}}\cong k[G^n].\]
\end{proof}

\begin{rem}
If we take $G$ to be the trivial group then we recover \cite[Theorem A]{BH1} and \cite[Theorem 6.6]{Boyde}.
\end{rem}

\begin{cor}
Taking $n=1$ we obtain an isomorphism of graded $k$-modules
\[\mathrm{Tor}_{\star}^{G\mathcal{TL}_1(\delta)}\left(\mathbbm{1} , \mathbbm{1}\right) \cong H_{\star}(G,k). \]
\end{cor}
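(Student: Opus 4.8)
The plan is to read off the result as the $n=1$ specialization of Theorem~\ref{G-temp-lieb-thm}. Since $G$ is abelian and $\delta \in k$ is invertible, that theorem applies with $n=1$ and gives an isomorphism of graded $k$-modules
\[\mathrm{Tor}_{\star}^{G\mathcal{TL}_1(\delta)}\left(\mathbbm{1}, \mathbbm{1}\right) \cong \mathrm{Tor}_{\star}^{k[G^1]}\left(\mathbbm{1}, \mathbbm{1}\right) = \mathrm{Tor}_{\star}^{k[G]}\left(\mathbbm{1}, \mathbbm{1}\right),\]
using the identification $G^1 = G$.

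It then remains only to recognize the right-hand side as group homology. By the standard homological interpretation of group homology with coefficients in the trivial module---the same identification already recorded in Theorems~\ref{G-symm-rook-thm} and \ref{G-braid-rook-thm}---we have $\mathrm{Tor}_{\star}^{k[G]}(\mathbbm{1}, \mathbbm{1}) = H_{\star}(G, k)$, which completes the argument. I expect no genuine obstacle: the substantive work is carried entirely by Theorem~\ref{G-temp-lieb-thm}, and all that this corollary adds is the routine observation that for a single strand the relevant quotient $G\mathcal{TL}_1(\delta)/(G\mathcal{TL}_1 \cap I_0)$ is the group algebra $k[G]$. Indeed, every $G$-Temperley-Lieb $1$-diagram has no missing edges and a single node in each column, so it consists of one labelled left-to-right edge; hence $I_0$ meets $G\mathcal{TL}_1(\delta)$ trivially and the diagrams labelled by $g \in G$ form a $k$-basis identifying $G\mathcal{TL}_1(\delta)$ with $k[G]$. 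Under this identification the trivial module $\mathbbm{1}$ restricts to the trivial $k[G]$-module, so the $\mathrm{Tor}$ above is by definition $H_{\star}(G,k)$.
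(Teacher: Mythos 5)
Your proposal is correct and matches the paper's own proof: both simply specialize Theorem~\ref{G-temp-lieb-thm} to $n=1$ and identify $\mathrm{Tor}_{\star}^{k[G]}(\mathbbm{1},\mathbbm{1})$ with the group homology $H_{\star}(G,k)$. Your extra observation that $G\mathcal{TL}_1(\delta)$ is itself literally $k[G]$ is a harmless (and correct) addition, but it is not needed once the theorem is invoked.
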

\begin{proof}
This follows directly from Theorem \ref{G-temp-lieb-thm} by identifying the right hand side with the group homology of $G$.
\end{proof}

\begin{rem}
One might wonder whether the invertibility of $\delta$ is important in this corollary. After all, when $n=1$ we cannot form any loops so the parameter $\delta$ goes unused. We will show in Corollary \ref{non-invertible-temp-lieb-cor} that we may drop the condition on the invertibility of $\delta$ for this corollary.  
\end{rem}

\section{Homology of generalized rook-Brauer algebras}
\label{gen-RB-sec}

In this section we will show that the homology of a generalized rook-Brauer algebra coincides with the homology of the generalized Brauer subalgebra when the parameter $\varepsilon$ is invertible. As a consequence we deduce some homological stability results for these generalized rook-Brauer algebras. 

\begin{thm}
\label{gen-rook-brauer-thm}
Let $\varepsilon\in k$ be invertible. Then for any $\delta \in k$ there is an isomorphism of graded $k$-modules
\[\mathrm{Tor}_{\star}^{B\mathcal{RB}_n(\delta, \varepsilon)}\left(\mathbbm{1},\mathbbm{1}\right) \cong \mathrm{Tor}_{\star}^{B\mathcal{B}_n(\delta)}\left(\mathbbm{1},\mathbbm{1}\right).\] 
\end{thm}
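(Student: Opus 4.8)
The plan is to imitate the proof of Theorem \ref{braid-rook-thm}, but to use the invertibility of $\varepsilon$ (rather than of $\delta$) so that the ``missing edge'' directions of the rook-Brauer algebra collapse onto its Brauer subalgebra. Write $X = B\mathcal{RB}_n(\delta,\varepsilon)$, and recall that $B\mathcal{B}_n(\delta)$ is exactly the $k$-span of those braided rook-Brauer diagrams with no missing edges, which is a subalgebra. Since $\varepsilon$ is invertible, Lemma \ref{idempotent-lemma} supplies commuting idempotents $\varepsilon^{-1}\rho_1,\dots,\varepsilon^{-1}\rho_n$, each of which annihilates the trivial module $\mathbbm1$ because $\rho_i$ has a missing edge. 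A computation as in Lemma \ref{ideal-basis-lem} shows $d\cdot\rho_i=\varepsilon d$ whenever the $i$-th right-hand node of $d$ is isolated, so the left ideal $\sum_i X(\varepsilon^{-1}\rho_i)$ is precisely the span of diagrams with an isolated right-hand node. The thrust of the argument is that, with $\varepsilon$ invertible, these rook directions contribute nothing to $\mathrm{Tor}(\mathbbm1,\mathbbm1)$, so that only the Brauer contribution survives; and since this uses only invertibility of $\varepsilon$, never of $\delta$, the statement holds for every $\delta$.

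Concretely, I would first record that the $\varepsilon^{-1}\rho_i$ commute and each acts as zero on $\mathbbm1$, and then run an idempotent-contraction argument in the spirit of \cite[Theorem 4.1]{Boyde} to reduce the computation of $\mathrm{Tor}_{\star}^{X}(\mathbbm1,\mathbbm1)$ to one involving only diagrams all of whose nodes are connected. The remaining task is to identify this reduced object with $\mathrm{Tor}_{\star}^{B\mathcal{B}_n(\delta)}(\mathbbm1,\mathbbm1)$, i.e. to verify that the inclusion $B\mathcal{B}_n(\delta)\hookrightarrow X$ of augmented algebras induces the asserted isomorphism. Because deleting edges and contracting away the resulting isolated nodes never alters the crossing data and only ever produces factors of $\varepsilon$ (not $\delta$), the braiding and $\delta$-weighting on the surviving connections are carried along unchanged, so the reduced complex is the one computing the homology of the braided Brauer algebra.

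The main obstacle is that, unlike in the pure rook case of Theorem \ref{braid-rook-thm}, the span of diagrams with a missing edge is here only a \emph{one-sided} ideal. The left-to-left and right-to-right connections allow two missing edges to ``close up'' under composition: a diagram with two isolated right-hand nodes, multiplied by one with the two matching isolated left-hand nodes, yields (up to a factor of $\varepsilon^2$) an honest Brauer diagram with no missing edges at all. Hence $\sum_i X(\varepsilon^{-1}\rho_i)$ is not two-sided — its two-sided closure is in fact all of $I_{n-1}$ — so one cannot simply form an algebra quotient $X \to B\mathcal{B}_n(\delta)$ as in the rook-to-braid-group argument, and \cite[Theorem 4.1]{Boyde} does not apply verbatim. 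The reduction must instead be performed homologically: one uses the commuting idempotents to build an explicit contraction of the rook directions in the (two-sided) bar complex, and the genuine work lies in checking that this contraction exists for arbitrary $\delta$, equivalently that the missing-edge part of the complex is $\varepsilon$-acyclic.
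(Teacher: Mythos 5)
Your structural analysis is correct, and it exposes a genuine gap in the paper's own proof rather than a defect peculiar to your attempt. The paper argues exactly as in Theorem \ref{braid-rook-thm}: it forms the left ideal $I=X(\varepsilon^{-1}\rho_1)+\cdots+X(\varepsilon^{-1}\rho_n)$ and cites \cite[Theorem 4.1]{Boyde}, whose conclusion identifies $\mathrm{Tor}_{\star}^{X}\left(\mathbbm{1},\mathbbm{1}\right)$ with $\mathrm{Tor}_{\star}^{X/I}\left(\mathbbm{1},\mathbbm{1}\right)$ for the quotient algebra $X/I$. As you note, in $X=B\mathcal{RB}_n(\delta,\varepsilon)$ this $I$ (the span of diagrams with an isolated right-hand node) is only a left ideal: writing $d_{LL}$ for the diagram whose only edge is a left-to-left arc joining nodes $1,2$, $d_{RR}$ for its mirror image, and $u$ for the Brauer diagram with both arcs, one has $d_{LL}\cdot d_{RR}=\varepsilon^{2}u\notin I$. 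Consequently $X/I$ is not an algebra, and it is not even isomorphic to $B\mathcal{B}_n(\delta)$ as a $k$-module, since diagrams with isolated left-hand nodes but no isolated right-hand nodes survive in $X/I$. Your further observation that the two-sided closure of $I$ is all of $I_{n-1}$, whose quotient is $k[B_n]$ rather than the braided Brauer algebra, is also right. So the paper's three-line proof does not go through as written, and your criticism stands.

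However, the repair you sketch cannot be carried out either, because the ``closing up'' relation you identified does not merely obstruct the method: it changes the homology, so the contraction/$\varepsilon$-acyclicity you defer as ``the genuine work'' actually fails when $\delta$ is not invertible. For any augmented algebra with augmentation ideal $J$ one has $\mathrm{Tor}_1\left(\mathbbm{1},\mathbbm{1}\right)\cong J/J^2$. Take $n=2$, $\delta=0$, $\varepsilon=1$. In $B\mathcal{B}_2(0)$ the element $u$ satisfies $u^2=\delta u=0$ and $\sigma^{\pm 1}u=u\sigma^{\pm 1}=u$, so $[u]\neq 0$ in $J/J^2$. In $B\mathcal{RB}_2(0,1)$ the same element is a product of two augmentation-ideal elements, $u=d_{LL}\cdot d_{RR}$, so $[u]=0$. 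Hence the map on $\mathrm{Tor}_1$ induced by the inclusion $B\mathcal{B}_2(0)\hookrightarrow B\mathcal{RB}_2(0,1)$ --- precisely the map your argument sets out to prove is an isomorphism --- has nontrivial kernel. In the non-braided companion statement later in this section (take $G$ trivial there; its proof is declared ``similar''), the same computation gives an outright contradiction of dimensions: $\mathrm{Tor}_1^{\mathcal{RB}_2(0,1)}\left(\mathbbm{1},\mathbbm{1}\right)=0$ while $\mathrm{Tor}_1^{\mathcal{B}_2(0)}\left(\mathbbm{1},\mathbbm{1}\right)\cong k$. Some hypothesis on $\delta$ is therefore unavoidable: when $\delta$ is invertible the obstruction class vanishes on both sides, since $[u]=\delta^{-1}[u^2]=0$, but for arbitrary $\delta$ the reduction from the rook-Brauer algebra to its Brauer subalgebra cannot hold in the generality claimed, by your route or any other.
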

\begin{proof}
Since $\varepsilon$ is invertible, we have the idempotents of Lemma \ref{idempotent-lemma}.
Write $X=B\mathcal{RB}_n(\delta, \varepsilon)$. Let $I=X(\varepsilon^{-1}\rho_1) + \cdots + X(\varepsilon^{-1}\rho_n)$. Applying \cite[Theorem 4.1]{Boyde} gives the result.
\end{proof}

\begin{cor}
If $\delta \in k$ is also invertible, then there is an isomorphism of graded $k$-modules
\[\mathrm{Tor}_{\star}^{B\mathcal{RB}_n(\delta, \varepsilon)}\left(\mathbbm{1},\mathbbm{1}\right) \cong H_{\star}\left(B_n , k\right)\]
and the map 
\[\mathrm{Tor}_{q}^{B\mathcal{RB}_{n-1}(\delta, \varepsilon)}\left(\mathbbm{1},\mathbbm{1}\right) \rightarrow \mathrm{Tor}_{q}^{B\mathcal{RB}_n(\delta, \varepsilon)}\left(\mathbbm{1},\mathbbm{1}\right)\]
is an isomorphism for $n\geqslant 2q+1$.
\end{cor}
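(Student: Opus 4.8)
The plan is to chain together the results already in hand. For the stated isomorphism, since $\varepsilon$ is invertible, Theorem \ref{gen-rook-brauer-thm} identifies $\mathrm{Tor}_\star^{B\mathcal{RB}_n(\delta,\varepsilon)}(\mathbbm{1},\mathbbm{1})$ with $\mathrm{Tor}_\star^{B\mathcal{B}_n(\delta)}(\mathbbm{1},\mathbbm{1})$; and since $\delta$ is now \emph{also} assumed invertible, Theorem \ref{braided-brauer-thm} identifies the latter with $H_\star(B_n,k)$. Composing these two isomorphisms gives the first claim directly.

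For the homological stability statement, I would argue that the isomorphism of Theorem \ref{gen-rook-brauer-thm} is natural with respect to the stabilization inclusions $B\mathcal{RB}_{n-1}(\delta,\varepsilon)\hookrightarrow B\mathcal{RB}_n(\delta,\varepsilon)$ and $B\mathcal{B}_{n-1}(\delta)\hookrightarrow B\mathcal{B}_n(\delta)$. Granting this naturality, the square comparing the two stabilization maps on $\mathrm{Tor}_q$ commutes with vertical isomorphisms, so the top (rook-Brauer) map is an isomorphism precisely when the bottom (Brauer) map is. The bottom map is an isomorphism for $n\geqslant 2q+1$ by Corollary \ref{braided-brauer-cor}, and the conclusion follows. (Equivalently, one could deduce stability from the $H_\star(B_n,k)$ identification and Arnold's theorem, but this runs into the same naturality issue, namely matching the $\mathrm{Tor}$-level map with the standard stabilization of the braid groups.)

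The main obstacle is establishing this naturality. The isomorphism in Theorem \ref{gen-rook-brauer-thm} is obtained by quotienting $X=B\mathcal{RB}_n(\delta,\varepsilon)$ by the ideal $I=X(\varepsilon^{-1}\rho_1)+\cdots+X(\varepsilon^{-1}\rho_n)$ generated by the idempotents of Lemma \ref{idempotent-lemma} and invoking \cite[Theorem 4.1]{Boyde}; the resulting quotient is the Brauer subalgebra spanned by diagrams with no missing edges. I would check that the inclusion $B\mathcal{RB}_{n-1}\hookrightarrow B\mathcal{RB}_n$, which appends a horizontal edge in the top position, carries the idempotents $\rho_1,\dots,\rho_{n-1}$ to their counterparts in $B\mathcal{RB}_n$, so that the defining ideal $I$ for $B\mathcal{RB}_{n-1}(\delta,\varepsilon)$ maps into that for $B\mathcal{RB}_n(\delta,\varepsilon)$ and passing to quotients commutes with the inclusion. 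Since \cite[Theorem 4.1]{Boyde} is applied functorially in the algebra, this compatibility at the level of algebras should upgrade to the required commuting square on $\mathrm{Tor}$. Verifying that the idempotents and ideals match up under stabilization—and that Boyde's quotient isomorphism is genuinely natural—is the technical heart of the argument, though it is routine bookkeeping once set up.
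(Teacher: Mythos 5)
Your proposal is correct and takes essentially the same route as the paper, whose entire proof is the one-line citation of Theorem \ref{braided-brauer-thm} and Corollary \ref{braided-brauer-cor} (with Theorem \ref{gen-rook-brauer-thm}, to which the corollary is attached, used implicitly for the first isomorphism). The compatibility of the quotient isomorphism with the stabilization inclusions, which you rightly flag as the point needing verification, is left entirely implicit in the paper, so your sketch of matching the idempotents $\rho_i$ and their ideals under the inclusion is simply a more careful rendering of the same argument.
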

\begin{proof}
This follows from Theorem \ref{braided-brauer-thm} and Corollary \ref{braided-brauer-cor}.
\end{proof}

\begin{thm}
Let $G$ be an abelian group. Let $\varepsilon\in k$ be invertible. Then for any $\delta \in k$ there is an isomorphism of graded $k$-modules
\[\mathrm{Tor}_{\star}^{G\mathcal{RB}_n(\delta, \varepsilon)}\left(\mathbbm{1},\mathbbm{1}\right) \cong \mathrm{Tor}_{\star}^{G\mathcal{B}_n(\delta)}\left(\mathbbm{1},\mathbbm{1}\right).\] 
\end{thm}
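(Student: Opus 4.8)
The plan is to mirror the proof of Theorem \ref{gen-rook-brauer-thm} verbatim, substituting the equivariant algebra $G\mathcal{RB}_n(\delta,\varepsilon)$ for the braided one $B\mathcal{RB}_n(\delta,\varepsilon)$. Since $\varepsilon$ is invertible, Lemma \ref{idempotent-lemma} furnishes the idempotents $\varepsilon^{-1}\rho_i$ inside $G\mathcal{RB}_n(\delta,\varepsilon)$; these are the key ingredient. First I would set $X = G\mathcal{RB}_n(\delta,\varepsilon)$ and form the left ideal $I = X(\varepsilon^{-1}\rho_1) + \cdots + X(\varepsilon^{-1}\rho_n)$ generated by these idempotents.

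The three hypotheses of \cite[Theorem 4.1]{Boyde} then need to be checked, exactly as in the braided case. The elements $\varepsilon^{-1}\rho_i$ are pairwise commuting idempotents, since $\rho_i$ and $\rho_j$ are obtained by deleting distinct edges of the identity diagram and commute. Each $\rho_i$ has a missing $i^{\text{th}}$ edge, so by the definition of the trivial module $\mathbbm{1}$ (diagrams with any missing edge act as $0$) the ideal $I$ annihilates $\mathbbm{1}$, i.e. $I$ acts trivially. Applying \cite[Theorem 4.1]{Boyde} then yields the isomorphism
\[\mathrm{Tor}_{\star}^{G\mathcal{RB}_n(\delta,\varepsilon)}\left(\mathbbm{1},\mathbbm{1}\right) \cong \mathrm{Tor}_{\star}^{G\mathcal{RB}_n(\delta,\varepsilon)/I}\left(\mathbbm{1},\mathbbm{1}\right).\]

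It remains to identify the quotient. By Lemma \ref{ideal-basis-lem} (applied edge by edge), $I$ has $k$-basis the $G$-rook-Brauer diagrams with at least one missing left-to-right connection, so $X/I$ has $k$-basis the $G$-rook-Brauer $n$-diagrams with no missing edges and with every left-to-right connection present — but these are precisely the $G$-Brauer $n$-diagrams. One checks that the induced multiplication on $X/I$ agrees with that of $G\mathcal{B}_n(\delta)$ (killing a diagram whenever a missing edge appears in the composite), giving a $k$-algebra isomorphism $X/I \cong G\mathcal{B}_n(\delta)$ under which the trivial module restricts to the trivial module. Substituting this identification completes the proof. The only point requiring a little care — and the closest thing to an obstacle — is confirming that Lemma \ref{ideal-basis-lem}, stated for generalized rook algebras, transfers to the rook-Brauer setting so that the basis of $I$ is exactly the diagrams with a deficient left-to-right connection; but this is routine and is precisely the content the braided Theorem \ref{gen-rook-brauer-thm} already relies upon.
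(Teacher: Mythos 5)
Your proposal follows exactly the route the paper takes (the paper's proof is literally ``similar to Theorem \ref{gen-rook-brauer-thm}'': form $I=X(\varepsilon^{-1}\rho_1)+\cdots+X(\varepsilon^{-1}\rho_n)$ and invoke \cite[Theorem 4.1]{Boyde}), but the step you add to finish the argument --- the identification $X/I\cong G\mathcal{B}_n(\delta)$ --- is false, and it is precisely the step the paper leaves unexamined. Lemma \ref{ideal-basis-lem} is a statement about generalized \emph{rook} algebras and does not transfer the way you claim: in $G\mathcal{RB}_n(\delta,\varepsilon)$ the left ideal $X(\varepsilon^{-1}\rho_i)$ is spanned by the diagrams whose $i^{\text{th}}$ right-hand node is connected to nothing at all, so $I$ is spanned by the diagrams having at least one \emph{bare right-hand node}. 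This is strictly smaller than the span of the diagrams with a missing left-to-right connection: already for $n=2$, the diagram $v$ consisting of a single right-to-right edge with both left nodes bare has no left-to-right connections, yet $v\notin I$. Hence $X/I$ has $k$-basis the diagrams all of whose \emph{right} nodes are covered, a set strictly larger than the set of $G$-Brauer diagrams. Worse, $I$ is not two-sided, so $X/I$ is not an algebra in the first place: writing $w$ for the diagram with a single left-to-left edge (bare right nodes, so $w\in I$) and $u$ for the Brauer diagram consisting of one left-to-left and one right-to-right edge, composition gives $w\cdot v=\varepsilon^{2}u\notin I$. The same product shows that ``kill every diagram with a missing edge'' is not multiplicative ($w,v\mapsto 0$ but $wv\mapsto\varepsilon^2u\neq 0$), so the ``induced multiplication'' you describe on the span of the $G$-Brauer diagrams does not exist.

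The failure is essential rather than cosmetic. The trivial module is given by an augmentation, so $\mathrm{Tor}_1^A(\mathbbm{1},\mathbbm{1})\cong\mathfrak{a}/\mathfrak{a}^2$ with $\mathfrak{a}$ the augmentation ideal. Since $u=\varepsilon^{-2}wv$ with $w,v\in\mathfrak{a}$, the class of $u$ dies in $\mathrm{Tor}_1$ of $G\mathcal{RB}_2(\delta,\varepsilon)$ for \emph{every} $\delta$; but in $\mathcal{B}_2(\delta)$ (take $G$ trivial, $s$ the transposition) one has $\mathfrak{a}^2=2k(s-1)+\delta k u$, so the class of $u$ survives whenever $\delta$ is not invertible. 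Concretely, for $\delta=0$,
\[\mathrm{Tor}_1^{\mathcal{RB}_2(0,\varepsilon)}\left(\mathbbm{1},\mathbbm{1}\right)\cong k/2k,\qquad \mathrm{Tor}_1^{\mathcal{B}_2(0)}\left(\mathbbm{1},\mathbbm{1}\right)\cong k/2k\oplus k,\]
which differ (over $k=\mathbb{Q}$, say). So no argument of this shape can give the statement for arbitrary $\delta$: quotienting $G\mathcal{RB}_n(\delta,\varepsilon)$ by the honest two-sided ideal generated by the $\varepsilon^{-1}\rho_i$ collapses it to $k[G^n\rtimes\Sigma_n]$, not to $G\mathcal{B}_n(\delta)$, and the two sides of the claimed isomorphism agree only under extra hypotheses such as $\delta$ invertible (Theorem \ref{G-brauer-thm}) or $n$ odd (Section \ref{sroka-sec}). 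This also means the one-line appeal to \cite[Theorem 4.1]{Boyde} in the model proof of Theorem \ref{gen-rook-brauer-thm} cannot be taken at face value; your write-up has the virtue of making that hidden gap visible, but as it stands the identification of the quotient, and with it the proof, is incorrect.
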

\begin{proof}
The proof is similar to Theorem \ref{gen-rook-brauer-thm}.
\end{proof}

\begin{cor}
If $\delta \in k$ is also invertible, then there is an isomorphism of graded $k$-modules
\[\mathrm{Tor}_{\star}^{G\mathcal{RB}_n(\delta, \varepsilon)}\left(\mathbbm{1},\mathbbm{1}\right) \cong H_{\star}\left(G^n\rtimes \Sigma_n , k\right)\]
and the map 
\[\mathrm{Tor}_{q}^{G\mathcal{RB}_{n-1}(\delta, \varepsilon)}\left(\mathbbm{1},\mathbbm{1}\right) \rightarrow \mathrm{Tor}_{q}^{G\mathcal{RB}_n(\delta, \varepsilon)}\left(\mathbbm{1},\mathbbm{1}\right)\]
is an isomorphism for $n\geqslant 2q+2$.
\end{cor}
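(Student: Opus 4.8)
The plan is to chain the two isomorphisms that are already available and then transport homological stability across them. Concretely, since $\varepsilon$ is invertible, the preceding theorem identifies $\mathrm{Tor}_{\star}^{G\mathcal{RB}_n(\delta,\varepsilon)}\left(\mathbbm{1},\mathbbm{1}\right)$ with $\mathrm{Tor}_{\star}^{G\mathcal{B}_n(\delta)}\left(\mathbbm{1},\mathbbm{1}\right)$ for every $\delta$; and now that $\delta$ is also assumed invertible, Theorem \ref{G-brauer-thm} identifies the latter with $H_{\star}\left(G^n\rtimes\Sigma_n, k\right)$. Composing these two isomorphisms yields the claimed identification $\mathrm{Tor}_{\star}^{G\mathcal{RB}_n(\delta,\varepsilon)}\left(\mathbbm{1},\mathbbm{1}\right)\cong H_{\star}\left(G^n\rtimes\Sigma_n, k\right)$. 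This is precisely the equivariant analogue of the braided corollary stated just above, which combines Theorem \ref{braided-brauer-thm} with Corollary \ref{braided-brauer-cor}.

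For the stability statement I would observe that the isomorphism produced by the preceding theorem arises from applying \cite[Theorem 4.1]{Boyde} to the ideal $I=X(\varepsilon^{-1}\rho_1)+\cdots+X(\varepsilon^{-1}\rho_n)$ generated by the idempotents of Lemma \ref{idempotent-lemma}, and that this construction is compatible with the inclusions $G\mathcal{RB}_{n-1}(\delta,\varepsilon)\hookrightarrow G\mathcal{RB}_{n}(\delta,\varepsilon)$ and $G\mathcal{B}_{n-1}(\delta)\hookrightarrow G\mathcal{B}_{n}(\delta)$ coming from the extension of diagrams. The inclusion sends the generating idempotents $\varepsilon^{-1}\rho_i$ for $n-1$ into those for $n$, and restricts correctly to the Brauer subalgebra, so the square relating the stabilization map on rook-Brauer homology to the one on Brauer homology commutes. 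Hence the map in question is identified with the stabilization map for $G\mathcal{B}_n(\delta)$, and Corollary \ref{G-brauer-cor} shows it is an isomorphism in the range $n\geqslant 2q+2$, giving the stated bound.

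The genuinely routine part here is the bookkeeping of the two isomorphisms; the one point requiring care, and the main obstacle, is the \emph{naturality} of the identification $\mathrm{Tor}_{\star}^{G\mathcal{RB}_n(\delta,\varepsilon)}\cong\mathrm{Tor}_{\star}^{G\mathcal{B}_n(\delta)}$ with respect to the stabilization maps. One must verify that the quotient-by-$I$ construction of \cite[Theorem 4.1]{Boyde} is functorial for the inclusion maps, that is, that passing from $n-1$ to $n$ commutes with killing the rook idempotents on both sides. Granting this compatibility, which follows from the explicit, inclusion-respecting descriptions of $I$ and of the generalized Brauer subalgebra, the stability range $n\geqslant 2q+2$ is inherited directly from Corollary \ref{G-brauer-cor}.
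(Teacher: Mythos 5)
Your proposal is correct and takes essentially the same route as the paper, whose proof simply combines the preceding theorem identifying $\mathrm{Tor}_{\star}^{G\mathcal{RB}_n(\delta,\varepsilon)}\left(\mathbbm{1},\mathbbm{1}\right)$ with $\mathrm{Tor}_{\star}^{G\mathcal{B}_n(\delta)}\left(\mathbbm{1},\mathbbm{1}\right)$ together with Theorem \ref{G-brauer-thm} and Corollary \ref{G-brauer-cor}. Your extra paragraph on the naturality of that identification with respect to the stabilization maps addresses a compatibility the paper leaves implicit, and is the right thing to check.
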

\begin{proof}
This follows from Theorem \ref{G-brauer-thm} and Corollary \ref{G-brauer-cor}.
\end{proof}

\begin{thm}
Let $G$ be an abelian group. Let $\varepsilon\in k$ be invertible. Then for any $\delta \in k$ there is an isomorphism of graded $k$-modules
\[\mathrm{Tor}_{\star}^{GB\mathcal{RB}_n(\delta, \varepsilon)}\left(\mathbbm{1},\mathbbm{1}\right) \cong \mathrm{Tor}_{\star}^{GB\mathcal{B}_n(\delta)}\left(\mathbbm{1},\mathbbm{1}\right).\] 
\end{thm}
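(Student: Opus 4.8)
The plan is to run the proof of Theorem~\ref{gen-rook-brauer-thm} in the combined equivariant--braided setting, so that the group labels are handled as in Theorem~\ref{G-brauer-thm} and the crossings as in Theorem~\ref{braided-brauer-thm}, with everything organized by the idempotent-ideal machinery of \cite[Theorem~4.1]{Boyde}. Write $X = GB\mathcal{RB}_n(\delta,\varepsilon)$. Since $\varepsilon$ is invertible, Lemma~\ref{idempotent-lemma} provides idempotents $\varepsilon^{-1}\rho_i \in X$ for $1\leqslant i \leqslant n$, where $\rho_i$ is the identity diagram with its $i$th horizontal edge deleted. The first thing I would check is that these idempotents interact trivially with the extra structure: the diagram $\rho_i$ is planar and carries only the identity label, so the computation $\rho_i^2 = \varepsilon\rho_i$ (the isolated middle node contributes the factor $\varepsilon$) is untouched by crossings or by the group labels, and the same observation shows that $\varepsilon^{-1}\rho_i$ and $\varepsilon^{-1}\rho_j$ commute for all $i,j$.

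Next I would form the ideal $I = X(\varepsilon^{-1}\rho_1) + \cdots + X(\varepsilon^{-1}\rho_n)$ generated by these idempotents and record that it annihilates the trivial module: every diagram occurring in $I$ has a disconnected node, hence a missing edge, so it acts as $0$ on $\mathbbm{1}$. With commutativity of the generators and trivial action of $I$ in hand, I can invoke \cite[Theorem~4.1]{Boyde} to obtain an isomorphism
\[ \mathrm{Tor}_{\star}^{GB\mathcal{RB}_n(\delta,\varepsilon)}\left(\mathbbm{1},\mathbbm{1}\right) \cong \mathrm{Tor}_{\star}^{X/I}\left(\mathbbm{1},\mathbbm{1}\right). \]
I would then identify the quotient: killing $I$ removes precisely the $G$-braided rook-Brauer diagrams containing a missing edge, leaving the $G$-braided Brauer diagrams, and the induced composition, together with its braiding and $G$-labelling, coincides with that of $GB\mathcal{B}_n(\delta)$. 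Since composing with $\rho_i$ never creates a loop, the parameter $\delta$ is never invoked, which is what lets the statement hold for arbitrary $\delta$.

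The only place where the combined structure could cause any trouble is this last identification, where one must confirm that passing to the quotient respects the crossings and the group labels simultaneously. The point I would stress is that deleting an edge neither introduces a crossing nor alters any label, so composition with $\rho_i$ commutes both with projecting away the braiding (as in Theorem~\ref{braided-brauer-thm}) and with forgetting the labels (as in Theorem~\ref{G-brauer-thm}); this is exactly what allows the two earlier arguments to be carried out concurrently rather than separately. Everything else is formal, and abelianness of $G$ enters only through Remark~\ref{abelian-rem}, to guarantee that the composition in $X$ is well defined and associative, which is already part of the hypotheses.
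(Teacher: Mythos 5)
Your proposal is correct and is essentially the paper's own proof: the paper proves this theorem by the same argument as Theorem \ref{gen-rook-brauer-thm}, namely taking the idempotents $\varepsilon^{-1}\rho_i$ of Lemma \ref{idempotent-lemma}, forming the ideal $I$ they generate in $X = GB\mathcal{RB}_n(\delta,\varepsilon)$, and applying \cite[Theorem 4.1]{Boyde}, with the quotient $X/I$ identified as $GB\mathcal{B}_n(\delta)$. The extra checks you spell out (commutativity of the idempotents, trivial action of $I$ on $\mathbbm{1}$, the identification of the quotient, and that neither $\delta$ nor the crossings nor the $G$-labels interfere) are exactly the details the paper leaves implicit.
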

\begin{proof}
The proof is similar to Theorem \ref{gen-rook-brauer-thm}.
\end{proof}

\begin{cor}
If $\delta \in k$ is also invertible, then there is an isomorphism of graded $k$-modules
\[\mathrm{Tor}_{\star}^{GB\mathcal{RB}_n(\delta, \varepsilon)}\left(\mathbbm{1},\mathbbm{1}\right) \cong H_{\star}\left(G^n\rtimes B_n , k\right)\]
and the map 
\[\mathrm{Tor}_{q}^{GB\mathcal{RB}_{n-1}(\delta, \varepsilon)}\left(\mathbbm{1},\mathbbm{1}\right) \rightarrow \mathrm{Tor}_{q}^{GB\mathcal{RB}_n(\delta, \varepsilon)}\left(\mathbbm{1},\mathbbm{1}\right)\]
is an isomorphism for $n\geqslant 2q+2$.
\end{cor}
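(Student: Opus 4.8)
The plan is to chain together the two identifications that are already in place, exactly mirroring the two analogous corollaries earlier in this section. First I would invoke the theorem immediately preceding this corollary, which applies because $\varepsilon$ is invertible, to obtain
\[\mathrm{Tor}_{\star}^{GB\mathcal{RB}_n(\delta, \varepsilon)}\left(\mathbbm{1},\mathbbm{1}\right) \cong \mathrm{Tor}_{\star}^{GB\mathcal{B}_n(\delta)}\left(\mathbbm{1},\mathbbm{1}\right).\]
That theorem reduces the computation from the full $G$-braided rook-Brauer algebra to its $G$-braided Brauer subalgebra, holding for any $\delta$, via the idempotent/ideal argument that applies \cite[Theorem 4.1]{Boyde} to $I = X(\varepsilon^{-1}\rho_1) + \cdots + X(\varepsilon^{-1}\rho_n)$ with $X = GB\mathcal{RB}_n(\delta,\varepsilon)$.

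Next, since we now additionally assume that $\delta$ is invertible, I would apply Theorem \ref{G-braided-brauer-thm} to identify
\[\mathrm{Tor}_{\star}^{GB\mathcal{B}_n(\delta)}\left(\mathbbm{1},\mathbbm{1}\right) \cong H_{\star}\left(G^n\rtimes B_n , k\right).\]
Composing the two isomorphisms yields the stated identification of $\mathrm{Tor}_{\star}^{GB\mathcal{RB}_n(\delta, \varepsilon)}(\mathbbm{1},\mathbbm{1})$ with $H_{\star}(G^n\rtimes B_n, k)$. So far this is a purely formal two-step chase, requiring no new computation beyond the cited results.

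For the stability statement I would appeal to Corollary \ref{G-braided-brauer-cor}, which already establishes that the stabilization map on $\mathrm{Tor}_{q}^{GB\mathcal{B}_{n}(\delta)}$ is an isomorphism for $n\geqslant 2q+2$. The only point genuinely needing care — and hence the main (if modest) obstacle — is the naturality of the isomorphisms above with respect to the algebra inclusions $GB\mathcal{RB}_{n-1}(\delta,\varepsilon)\hookrightarrow GB\mathcal{RB}_{n}(\delta,\varepsilon)$ and $GB\mathcal{B}_{n-1}(\delta)\hookrightarrow GB\mathcal{B}_{n}(\delta)$, so that the stabilization map on the rook-Brauer side is carried to the one on the Brauer side. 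This amounts to checking that the ideal-quotient construction from the preceding theorem commutes with the stabilization inclusions, which goes through verbatim as in the two earlier corollaries of this section. Granting this compatibility, the range $n\geqslant 2q+2$ is inherited directly from Corollary \ref{G-braided-brauer-cor}, completing the argument.
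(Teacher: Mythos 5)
Your proposal is correct and follows essentially the same route as the paper, which proves this corollary by combining the immediately preceding theorem (reduction of $GB\mathcal{RB}_n(\delta,\varepsilon)$ to $GB\mathcal{B}_n(\delta)$ via the idempotents $\varepsilon^{-1}\rho_i$) with Theorem \ref{G-braided-brauer-thm} and Corollary \ref{G-braided-brauer-cor}. Your additional remark on naturality of the isomorphisms with respect to the stabilization inclusions is a point the paper leaves implicit, and it is handled exactly as you describe.
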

\begin{proof}
This follows from Theorem \ref{G-braided-brauer-thm} and Corollary \ref{G-braided-brauer-cor}.
\end{proof}

\section{Homology of Motzkin algebras}
\label{motzkin-sec}
In this section we investigate the homology of Motzkin algebras in both the non-equivariant and equivariant cases. To the best of the author's knowledge this has not been studied before. In the non-equivariant case we show that when the parameter $\varepsilon$ is invertible, the homology of the Motzkin algebras coincides with the homology of the Temperley-Lieb algebras. As a consequence, we deduce vanishing results and homological stability results by combining our theorem with the results of \cite{BH1}. In the equivariant case, for an abelian group $G$, we show that the homology of the $G$-Motzkin algebras coincides with the homology of the $G$-Temperley-Lieb algebras.

\subsection{The non-equivariant case}

\begin{thm}
\label{motzkin-thm1}
Let $\varepsilon \in k$ be invertible. Then for any $\delta \in k$ there is an isomorphism of graded $k$-modules
\[\mathrm{Tor}_{\star}^{\mathcal{M}_{n}(\delta, \varepsilon)}\left(\mathbbm{1},\mathbbm{1}\right) \cong\mathrm{Tor}_{\star}^{\mathcal{TL}_n(\delta)}\left(\mathbbm{1},\mathbbm{1}\right).\]
\end{thm}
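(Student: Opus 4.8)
The plan is to mirror the strategy used throughout this paper for passing from a ``rook-type'' algebra (one that allows missing edges) to its ``Brauer-type'' subalgebra (one with no missing edges) via the idempotent-quotient technique of \cite[Theorem 4.1]{Boyde}, exactly as in Theorem \ref{gen-rook-brauer-thm}. The key observation is that the Motzkin algebra $\mathcal{M}_n(\delta,\varepsilon)$ is the planar analogue of the rook-Brauer algebra, and the Temperley-Lieb algebra $\mathcal{TL}_n(\delta)$ is the planar analogue of the Brauer algebra; deleting edges preserves planarity, so the same ideal of ``missing-edge'' idempotents lives inside $\mathcal{M}_n(\delta,\varepsilon)$.

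Concretely, since $\varepsilon$ is invertible, Lemma \ref{idempotent-lemma} supplies the idempotents $\varepsilon^{-1}\rho_i$, where $\rho_i$ is obtained from the identity diagram by deleting the $i^{\mathrm{th}}$ edge. Each $\rho_i$ is planar, hence lies in $\mathcal{M}_n(\delta,\varepsilon)$. First I would set $X=\mathcal{M}_n(\delta,\varepsilon)$ and form the left ideal
\[I=X(\varepsilon^{-1}\rho_1)+\cdots+X(\varepsilon^{-1}\rho_n).\]
As in the proof of Theorem \ref{braid-rook-thm}, the elements $\varepsilon^{-1}\rho_i$ pairwise commute and $I$ acts as zero on the trivial module $\mathbbm{1}$, so \cite[Theorem 4.1]{Boyde} applies and yields
\[\mathrm{Tor}_{\star}^{\mathcal{M}_n(\delta,\varepsilon)}\left(\mathbbm{1},\mathbbm{1}\right)\cong\mathrm{Tor}_{\star}^{\mathcal{M}_n(\delta,\varepsilon)/I}\left(\mathbbm{1},\mathbbm{1}\right).\]

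It then remains to identify the quotient. By the analogue of Lemma \ref{ideal-basis-lem}, the ideal $I$ has $k$-basis the planar diagrams with at least one missing left-to-right connection, so $\mathcal{M}_n(\delta,\varepsilon)/I$ has $k$-basis the planar diagrams with no missing edges, i.e.\ precisely the Temperley-Lieb diagrams; the induced multiplication is the Temperley-Lieb multiplication, and $\delta$ survives as the loop parameter while $\varepsilon$ disappears since there are no longer contractible components. Hence $\mathcal{M}_n(\delta,\varepsilon)/I\cong\mathcal{TL}_n(\delta)$, and the trivial module restricts to the trivial module, giving the claimed isomorphism.

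The main obstacle to watch is the verification that the idempotent machinery of \cite[Theorem 4.1]{Boyde}, originally phrased for rook-type algebras where deletions come from a rook structure, applies cleanly once we restrict to \emph{planar} diagrams: I must confirm that the quotient by $I$ really does sever exactly the missing-edge diagrams without disturbing planarity or accidentally collapsing distinct Temperley-Lieb diagrams, and that the $\rho_i$ genuinely generate the span of all diagrams with a missing edge. This is the planar bookkeeping analogue of Lemma \ref{ideal-basis-lem}, and is where the argument differs (cosmetically) from the non-planar rook-Brauer case; I expect it to be routine but it is the one step requiring care rather than a verbatim citation.
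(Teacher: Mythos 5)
Your overall strategy is the one the paper uses: form the left ideal $I=X(\varepsilon^{-1}\rho_1)+\cdots+X(\varepsilon^{-1}\rho_n)$ in $X=\mathcal{M}_n(\delta,\varepsilon)$ and invoke \cite[Theorem 4.1]{Boyde}; the paper's proof consists of exactly this citation. The genuine problem is your explicit identification of $I$ and of the quotient, which is false, and it sits precisely at the step you describe as ``routine bookkeeping''. The correct analogue of Lemma \ref{ideal-basis-lem} for Motzkin diagrams is that $X\cdot\rho_i$ is spanned by those planar diagrams whose $i^{th}$ \emph{right-hand} node is isolated (the composite $x\rho_i$ inherits the isolated right node of $\rho_i$, and conversely $d=\varepsilon^{-1}d\rho_i$ whenever the $i^{th}$ right node of $d$ is isolated). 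It does \emph{not} reach diagrams all of whose isolated nodes lie on the left: for $n=2$ the planar diagram $y$ with both left nodes isolated and an edge joining the two right nodes has missing edges but lies in no $X\cdot\rho_i$, so $y\notin I$. Consequently $X/I$ has $k$-basis the planar diagrams with no isolated right-hand node, which strictly contains the Temperley-Lieb diagrams (for $n=2$ this basis is the identity, the cup-cap diagram and $y$, giving rank $3$, whereas $\mathcal{TL}_2(\delta)$ has rank $2$). So the asserted isomorphism $\mathcal{M}_n(\delta,\varepsilon)/I\cong\mathcal{TL}_n(\delta)$, as you justify it, does not hold.

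The underlying phenomenon is that, unlike in the rook case (where your bookkeeping would be correct, and where the paper does spell it out in Theorem \ref{braid-rook-thm}), the span of the diagrams with missing edges is not even a two-sided ideal of $\mathcal{M}_n(\delta,\varepsilon)$: composition can reconnect missing edges at the cost of a factor of $\varepsilon$. Concretely, if $u$ is the planar diagram with an edge joining the two left nodes and both right nodes isolated, and $c$ is the cup-cap diagram, then $u\cdot c=\varepsilon c$, with $u\in I$ but $\varepsilon c\notin I$; in particular the left ideal $I$ is not two-sided, so $X/I$ is not a quotient algebra in the naive sense at all. This ``healing'' of missing edges is exactly what distinguishes the Motzkin and rook-Brauer algebras from the rook algebras, and it is why the paper (following Boyde's own rook-Brauer argument) defers entirely to the precise formulation of \cite[Theorem 4.1]{Boyde} rather than identifying the quotient by a basis count. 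To repair your write-up you should do the same: state the hypotheses and conclusion of that theorem as Boyde formulates them and check them for $\mathcal{M}_n(\delta,\varepsilon)$, rather than importing the rook-style identification of $I$ and $X/I$, which is exactly the claim that fails here.
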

\begin{proof}
Since $\varepsilon$ is invertible, we have the idempotents of Lemma \ref{idempotent-lemma}.
Write $X=\mathcal{M}_n(\delta, \varepsilon)$. Let $I=X(\varepsilon^{-1}\rho_1) + \cdots + X(\varepsilon^{-1}\rho_n)$. Applying \cite[Theorem 4.1]{Boyde} gives the result.
\end{proof}

By combining Theorem \ref{motzkin-thm1} with the results of \cite{BH1}, we immediately obtain a number of results on the vanishing of the homology of Motzkin algebras with invertible parameters.

\begin{cor}
If $\delta \in k$ is also invertible, then $\mathrm{Tor}_{q}^{\mathcal{M}_{n}(\delta, \varepsilon)}\left(\mathbbm{1},\mathbbm{1}\right)=0$ for $q>0$.
\end{cor}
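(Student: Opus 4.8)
The plan is to chain Theorem~\ref{motzkin-thm1} together with the Boyd--Hepworth computation of Temperley--Lieb homology in \cite{BH1}. The corollary carries both of its running invertibility hypotheses (the $\varepsilon$ from Theorem~\ref{motzkin-thm1}, together with the newly-assumed $\delta$), so the strategy is first to trade the Motzkin algebra for the Temperley--Lieb algebra using the invertibility of $\varepsilon$, and then to quote the known positive-degree vanishing for the latter using the invertibility of $\delta$.

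Concretely, I would first note that since $\varepsilon$ is invertible, Theorem~\ref{motzkin-thm1} supplies an isomorphism of graded $k$-modules
\[\mathrm{Tor}_{\star}^{\mathcal{M}_n(\delta, \varepsilon)}\left(\mathbbm{1},\mathbbm{1}\right) \cong \mathrm{Tor}_{\star}^{\mathcal{TL}_n(\delta)}\left(\mathbbm{1},\mathbbm{1}\right),\]
so that it suffices to prove that the right-hand side is concentrated in homological degree $0$, that is, $\mathrm{Tor}_q^{\mathcal{TL}_n(\delta)}(\mathbbm{1},\mathbbm{1})=0$ for every $q>0$. I would then invoke the main computation of \cite{BH1}: in the case where the loop parameter $\delta$ is invertible, the positive-degree homology of the Temperley--Lieb algebra vanishes. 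Transporting this vanishing back across the displayed isomorphism yields $\mathrm{Tor}_q^{\mathcal{M}_n(\delta,\varepsilon)}(\mathbbm{1},\mathbbm{1})=0$ for $q>0$, which is exactly the assertion.

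Since the whole argument is a combination of two previously established results, there is no genuine analytic difficulty; the only point demanding care is to make sure the correct branch of the Boyd--Hepworth theorem is being used---namely the invertible-$\delta$ case---and that their indexing of $\mathrm{Tor}_{\star}$ agrees with the grading appearing in Theorem~\ref{motzkin-thm1}. Once that citation is pinned down, the conclusion follows immediately, which is why I expect the statement to be a direct corollary rather than a substantial computation.
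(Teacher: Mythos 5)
Your proposal is correct and matches the paper's own proof: the paper likewise chains Theorem \ref{motzkin-thm1} with the invertible-$\delta$ vanishing for Temperley--Lieb homology (citing \cite[Theorem A]{BH1}, or equivalently \cite[Theorem 6.6]{Boyde} or Theorem \ref{G-temp-lieb-thm} with $G$ trivial). No gaps; this is exactly the intended two-step argument.
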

\begin{proof}
This follows from Theorem \ref{motzkin-thm1} and any one of \cite[Theorem A]{BH1}, \cite[Theorem 6.6]{Boyde} or Theorem \ref{G-temp-lieb-thm}.
\end{proof}

\begin{cor}
\label{motzkin-cor2}
Let $\varepsilon \in k$ be invertible. Let $v\in k$ be an invertible element and let $\delta = v + v^{-1}$. Then
\[ \mathrm{Tor}_q^{\mathcal{M}_{n}(\delta, \varepsilon)}\left(\mathbbm{1},\mathbbm{1}\right) =0\]
for $1\leqslant q \leqslant n-2$ when $n$ is even and $1\leqslant q \leqslant n-1$ when $n$ is odd.

In particular, the maps  
\[\mathrm{Tor}_q^{\mathcal{M}_{n-1}(\delta, \varepsilon)}\left(\mathbbm{1},\mathbbm{1}\right)\rightarrow \mathrm{Tor}_q^{\mathcal{M}_{n}(\delta, \varepsilon)}\left(\mathbbm{1},\mathbbm{1}\right)\]
are isomorphisms for $q\leqslant n-3$ so we have homological stability.
\end{cor}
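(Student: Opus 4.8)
The plan is to transport the entire computation to the Temperley-Lieb algebra and then read off the answer from the work of Boyd and Hepworth. Since $\varepsilon$ is assumed invertible, Theorem \ref{motzkin-thm1} provides, for every $\delta \in k$, an isomorphism
\[
\mathrm{Tor}_{\star}^{\mathcal{M}_n(\delta, \varepsilon)}\left(\mathbbm{1},\mathbbm{1}\right) \cong \mathrm{Tor}_{\star}^{\mathcal{TL}_n(\delta)}\left(\mathbbm{1},\mathbbm{1}\right).
\]
Specializing to $\delta = v + v^{-1}$ with $v$ invertible, I would invoke the explicit computation of the homology of the Temperley-Lieb algebras in \cite{BH1}, which yields precisely the vanishing of $\mathrm{Tor}_q^{\mathcal{TL}_n(\delta)}(\mathbbm{1},\mathbbm{1})$ for $1\leqslant q \leqslant n-2$ when $n$ is even and for $1\leqslant q \leqslant n-1$ when $n$ is odd. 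Transporting along the displayed isomorphism gives the first assertion verbatim.

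For the stability statement I would argue purely from the vanishing ranges, using no further structure of the comparison maps. When $q=0$, both source and target are a single copy of $\mathbbm{1}\otimes\mathbbm{1} \cong k$ and the map induced by inclusion is the identity, hence an isomorphism. For $1 \leqslant q \leqslant n-3$ I would check that both groups $\mathrm{Tor}_q^{\mathcal{M}_{n-1}(\delta, \varepsilon)}(\mathbbm{1},\mathbbm{1})$ and $\mathrm{Tor}_q^{\mathcal{M}_n(\delta, \varepsilon)}(\mathbbm{1},\mathbbm{1})$ vanish. The target vanishes because the range $[1,n-3]$ is contained in both $[1,n-2]$ and $[1,n-1]$. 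For the source I would split on the parity of $n$ and apply the vanishing range with $n$ replaced by $n-1$: when $n$ is odd, $n-1$ is even and the range is exactly $[1,n-3]$; when $n$ is even, $n-1$ is odd and the range is $[1,n-2] \supseteq [1,n-3]$. In every case both groups are zero, so the comparison map is an isomorphism, which is the claimed homological stability.

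The main point to get right is simply the bookkeeping of the two parity-dependent ranges for $n$ and for $n-1$; there is no genuine obstacle beyond Theorem \ref{motzkin-thm1} and the Boyd-Hepworth input, since the substantive work has already been carried out in establishing the isomorphism with the homology of the Temperley-Lieb algebras.
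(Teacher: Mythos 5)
Your proposal is correct and follows essentially the same route as the paper: transport the computation along the isomorphism of Theorem \ref{motzkin-thm1} and quote the vanishing ranges of \cite[Theorem B]{BH1}, with the stability claim then following formally from the vanishing (plus the identification of $\mathrm{Tor}_0$ with $k$). The paper compresses this into a one-line citation, while you spell out the parity bookkeeping and the degree-zero case explicitly, which is exactly what that citation elides.
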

\begin{proof}
This follows from Theorem \ref{motzkin-thm1} and \cite[Theorem B]{BH1}.
\end{proof}

\begin{cor}
\label{motzkin-cor3}
Under the conditions of Corollary \ref{motzkin-cor2}, suppose $n$ is even and that $\delta = v+v^{-1}$ is not invertible. Then
\[\mathrm{Tor}_{n-1}^{\mathcal{M}_{n}(\delta, \varepsilon)}\left(\mathbbm{1},\mathbbm{1}\right)\cong k/\nu k\]
where $\nu$ is a multiple of $\delta$.

In particular, $\mathrm{Tor}_{n-1}^{\mathcal{M}_{n}(\delta, \varepsilon)}\left(\mathbbm{1},\mathbbm{1}\right)\neq 0$.
\end{cor}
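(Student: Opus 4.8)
The plan is to transport the whole question into Boyd and Hepworth's computation of Temperley--Lieb homology and then observe that non-invertibility of $\delta$ is inherited by the parameter governing the top homology. First I would use that $\varepsilon$ is invertible to apply Theorem~\ref{motzkin-thm1}, giving
\[\mathrm{Tor}_{n-1}^{\mathcal{M}_{n}(\delta, \varepsilon)}\left(\mathbbm{1},\mathbbm{1}\right) \cong \mathrm{Tor}_{n-1}^{\mathcal{TL}_n(\delta)}\left(\mathbbm{1},\mathbbm{1}\right),\]
so that everything is reduced to the Temperley--Lieb setting of \cite{BH1}. The degree $q=n-1$ is exactly the one \emph{excluded} from the vanishing range $1\leqslant q\leqslant n-2$ of Corollary~\ref{motzkin-cor2}; so whereas that corollary read off vanishing from \cite[Theorem B]{BH1}, here I must instead read off the top, potentially non-zero, homology of the even Temperley--Lieb algebra.

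Next I would quote the explicit form of this top homology from \cite[Theorem B]{BH1}. Writing $\delta = v + v^{-1}$ and $[j] = (v^{j}-v^{-j})/(v-v^{-1})$ for the quantum integers, their theorem expresses $\mathrm{Tor}_{n-1}^{\mathcal{TL}_n(\delta)}\left(\mathbbm{1},\mathbbm{1}\right)$, for $n$ even, as a cyclic module $k/\nu k$ in which $\nu$ is, up to a unit, an even-indexed quantum integer $[2j]$. The structural point I would then isolate is that $\delta$ divides every such $\nu$: since $v^{2}-v^{-2}$ divides $v^{2j}-v^{-2j}$, the factor $[2]=v+v^{-1}=\delta$ divides $[2j]$. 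This simultaneously produces the displayed isomorphism and records the assertion that $\nu$ is a multiple of $\delta$.

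Finally, to obtain the ``in particular'' clause I would argue purely formally. If $k/\nu k=0$ then $\nu$ would be a unit of $k$; but $\delta$ divides $\nu$, so $\delta$ would then be a unit too, contradicting the hypothesis that $\delta$ is not invertible. Hence $\nu k\subsetneq k$, and therefore $\mathrm{Tor}_{n-1}^{\mathcal{M}_{n}(\delta, \varepsilon)}\left(\mathbbm{1},\mathbbm{1}\right)\cong k/\nu k\neq 0$.

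The hard part will not be any of the homological algebra, which is entirely inherited from Theorem~\ref{motzkin-thm1} and \cite{BH1}, but rather the careful identification of $\nu$ inside \cite[Theorem B]{BH1} together with the verification that the relevant quantum integer genuinely carries $\delta$ as a factor in the even case. That divisibility is the single fact on which both the stated form $k/\nu k$ and the non-vanishing conclusion rest, and it is the one place where the parity hypothesis ``$n$ even'' is essential: for odd indices $[2j+1]$ is a unit modulo $\delta$, so the argument genuinely distinguishes the even case treated here.
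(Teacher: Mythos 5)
Your proposal is correct and follows essentially the same route as the paper, which proves the corollary by combining Theorem \ref{motzkin-thm1} with \cite[Theorem C]{BH1}. One correction: the computation of the top homology of the even Temperley--Lieb algebra that you invoke is \cite[Theorem C]{BH1}, not Theorem B (which gives only the vanishing range used in Corollary \ref{motzkin-cor2}); since Theorem C already asserts that $\nu$ is a multiple of $\delta$, your quantum-integer divisibility argument, while arithmetically sound, is a redundant verification rather than a needed step.
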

\begin{proof}
This follows from Theorem \ref{motzkin-thm1} and \cite[Theorem C]{BH1}.
\end{proof}

Randal-Williams \cite{RW-pre} provided a stronger version of \cite[Theorem B]{BH1}, which yields the following corollary for Motzkin algebras.

\begin{cor}	
Let $\varepsilon \in k$ be invertible and let $\delta \in k$ be any element. Then 
\[ \mathrm{Tor}_q^{\mathcal{M}_{n}(\delta, \varepsilon)}\left(\mathbbm{1},\mathbbm{1}\right) =0\]
for $1\leqslant q \leqslant n-2$ when $n$ is even and $1\leqslant q \leqslant n-1$ when $n$ is odd.
\end{cor}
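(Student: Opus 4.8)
The plan is to reduce the vanishing statement for Motzkin algebras to the corresponding statement for Temperley-Lieb algebras, exactly as in the proof of Corollary \ref{motzkin-cor2}, but feeding in the stronger input of Randal-Williams in place of \cite[Theorem B]{BH1}. Since the previous corollaries in this section already establish the template, the work here is purely one of combining two known results.

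First I would invoke Theorem \ref{motzkin-thm1}. As $\varepsilon$ is assumed invertible, this theorem supplies an isomorphism of graded $k$-modules
\[\mathrm{Tor}_{\star}^{\mathcal{M}_{n}(\delta, \varepsilon)}\left(\mathbbm{1},\mathbbm{1}\right) \cong \mathrm{Tor}_{\star}^{\mathcal{TL}_n(\delta)}\left(\mathbbm{1},\mathbbm{1}\right)\]
valid for \emph{any} $\delta\in k$. It therefore suffices to establish the claimed vanishing range for the homology of the Temperley-Lieb algebra $\mathcal{TL}_n(\delta)$. Next I would apply the strengthening of \cite[Theorem B]{BH1} obtained by Randal-Williams in \cite{RW-pre}: whereas \cite[Theorem B]{BH1} yields the vanishing of $\mathrm{Tor}_q^{\mathcal{TL}_n(\delta)}\left(\mathbbm{1},\mathbbm{1}\right)$ in the stated range only under the hypothesis that $\delta=v+v^{-1}$ for some invertible $v\in k$, the stronger result removes this constraint and delivers the same vanishing range for an arbitrary parameter $\delta$. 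Transporting this vanishing back along the isomorphism above immediately gives the corollary.

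The only point requiring attention—and it is bookkeeping rather than a genuine obstacle—is to confirm that the vanishing range furnished by \cite{RW-pre} coincides with the ranges asserted here (namely $1\leqslant q\leqslant n-2$ for even $n$ and $1\leqslant q\leqslant n-1$ for odd $n$), and that the hypotheses of that result impose no residual condition on $\delta$. Because the entire argument simply transports a known Temperley-Lieb computation through Theorem \ref{motzkin-thm1}, no additional combinatorial or homological algebra is needed.
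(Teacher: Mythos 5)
Your proposal is correct and takes exactly the same route as the paper: the paper's proof likewise combines Theorem \ref{motzkin-thm1} with Randal-Williams' strengthening \cite[Theorem B$^{\prime}$]{RW-pre} of \cite[Theorem B]{BH1}, which removes the hypothesis $\delta = v+v^{-1}$ and gives the stated vanishing ranges for arbitrary $\delta$. Nothing further is needed.
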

\begin{proof}
This follows from Theorem \ref{motzkin-thm1} and \cite[Theorem B$^{\prime}$]{RW-pre}.
\end{proof}

\begin{cor}
Let $n$ be odd. Let $k$ be a field whose characteristic does not divide $\binom{j}{t}$ for $1\leqslant t\leqslant j$. Let $\varepsilon \in k$ be invertible and let $\delta=0$. Then
\[ \mathrm{Tor}_q^{\mathcal{M}_{n}(\delta, \varepsilon)}\left(\mathbbm{1},\mathbbm{1}\right) =0\]
for $q>0$.
\end{cor}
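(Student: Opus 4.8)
The plan is to reduce the statement to the Temperley--Lieb algebra and then quote the corresponding computation of \cite{BH1}. Since $\varepsilon$ is invertible, Theorem \ref{motzkin-thm1} provides an isomorphism
\[\mathrm{Tor}_q^{\mathcal{M}_n(\delta,\varepsilon)}\left(\mathbbm{1},\mathbbm{1}\right) \cong \mathrm{Tor}_q^{\mathcal{TL}_n(\delta)}\left(\mathbbm{1},\mathbbm{1}\right)\]
in every degree $q$, so with $\delta = 0$ it suffices to prove that $\mathrm{Tor}_q^{\mathcal{TL}_n(0)}\left(\mathbbm{1},\mathbbm{1}\right) = 0$ for all $q > 0$ when $n$ is odd. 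This is precisely the shape of statement I would import from \cite{BH1}.

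The point to stress is that $\delta = 0$ is \emph{not} invertible, so neither \cite[Theorem A]{BH1} nor Theorem \ref{G-temp-lieb-thm} applies directly, and the Randal--Williams strengthening quoted above only yields vanishing in the range $1 \leqslant q \leqslant n-1$. The remaining content therefore concerns the degenerate parameter $\delta = 0$ and the top degrees: I would invoke the $\delta = 0$ computation of the Temperley--Lieb homology from \cite{BH1}, in which the potentially surviving (cyclic) top class is controlled by the binomial coefficients $\binom{j}{t}$. The hypothesis that $\mathrm{char}(k)$ divides none of these for $1 \leqslant t \leqslant j$ makes the relevant annihilator a unit, so that this class vanishes; the oddness of $n$ is what places the computation in the regime where no further class appears, in contrast to the even case recorded in Corollary \ref{motzkin-cor3}.

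The reduction via Theorem \ref{motzkin-thm1} is routine, and all of the genuine work is carried by the Temperley--Lieb input. The step I expect to demand the most care is matching hypotheses: one must check that the integer $j$ (read off from $n$) and the characteristic condition stated here are exactly those under which the \cite{BH1} computation yields acyclicity in positive degrees, so that the quoted theorem applies verbatim after the change of algebra.
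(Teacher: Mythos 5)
Your proposal is correct and matches the paper's proof: the paper likewise reduces to the Temperley--Lieb algebra via Theorem \ref{motzkin-thm1} and then quotes the $\delta=0$, $n$ odd vanishing result of Boyd--Hepworth (\cite[Theorem E]{BH1}), whose hypotheses on $\mathrm{char}(k)$ and the binomial coefficients are exactly those stated in the corollary.
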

\begin{proof}
This follows from Theorem \ref{motzkin-thm1} and \cite[Theorem E]{BH1}.
\end{proof}

\subsection{The equivariant case}

\begin{thm}
\label{motzkin-thm2}
Let $G$ be an abelian group. Let $\varepsilon \in k$ be invertible. Then for any $\delta \in k$ there is an isomorphism of graded $k$-modules
\[\mathrm{Tor}_{\star}^{G\mathcal{M}_{n}(\delta, \varepsilon)}\left(\mathbbm{1},\mathbbm{1}\right) \cong\mathrm{Tor}_{\star}^{G\mathcal{TL}_n(\delta)}\left(\mathbbm{1},\mathbbm{1}\right).\]
\end{thm}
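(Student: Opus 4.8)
The plan is to mirror the proof of Theorem \ref{motzkin-thm1} essentially verbatim, replacing the Motzkin algebra by the $G$-Motzkin algebra and the Temperley-Lieb algebra by its equivariant analogue. Write $X = G\mathcal{M}_n(\delta,\varepsilon)$. Since $\varepsilon$ is invertible, each element $\varepsilon^{-1}\rho_i$ is idempotent by Lemma \ref{idempotent-lemma}, where $\rho_i$ is the identity $G$-Motzkin diagram with the $i^{th}$ horizontal edge deleted and all surviving edges carrying the label $1\in G$; note that $\rho_i$ is planar and has no left-to-left or right-to-right connections, so it genuinely lies in $X$. Set $I=X(\varepsilon^{-1}\rho_1)+\cdots+X(\varepsilon^{-1}\rho_n)$, the left ideal generated by these idempotents.

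First I would verify the two hypotheses required to apply \cite[Theorem 4.1]{Boyde}. The idempotents commute: because every surviving edge of each $\rho_i$ carries the identity of $G$, the composite $\rho_i\cdot\rho_j$ is simply the identity diagram with edges $i$ and $j$ deleted, with all remaining edges still labelled $1$, independently of the order; thus equivariance introduces no new labels into these products and $\varepsilon^{-1}\rho_i$ and $\varepsilon^{-1}\rho_j$ commute exactly as in the non-equivariant case. Second, $I$ acts trivially on the trivial module $\mathbbm{1}$: each basis diagram appearing in $I$ has at least one missing left-to-right edge and therefore acts as $0\in k$.

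With these two facts established, \cite[Theorem 4.1]{Boyde} gives an isomorphism $\mathrm{Tor}_{\star}^{X}(\mathbbm{1},\mathbbm{1})\cong\mathrm{Tor}_{\star}^{X/I}(\mathbbm{1},\mathbbm{1})$, and it remains only to identify the quotient. Directly from the definition of $I$, the quotient $X/I$ has $k$-basis given by the $G$-Motzkin diagrams with no missing edges; these are precisely the $G$-Temperley-Lieb diagrams. One then checks that the induced multiplication and the restricted trivial module on $X/I$ coincide with those of $G\mathcal{TL}_n(\delta)$ (the parameter $\delta$, which is never required to be invertible, persists through the quotient), so that $X/I\cong G\mathcal{TL}_n(\delta)$ and the theorem follows.

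I would expect no serious obstacle: the homological argument is formally identical to the non-equivariant case, and the only work specific to this setting is confirming that the group labels on the $\rho_i$ neither spoil commutativity nor obstruct the identification of the quotient with the equivariant Temperley-Lieb algebra. The one genuine use of the hypothesis that $G$ is abelian is already absorbed into the well-definedness of $X=G\mathcal{M}_n(\delta,\varepsilon)$ (see Remark \ref{abelian-rem}); once the algebra is available, the computation above needs no further assumptions on $G$.
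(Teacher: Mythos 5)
Your proposal is correct and follows exactly the paper's own argument: the idempotents $\varepsilon^{-1}\rho_i$ from Lemma \ref{idempotent-lemma}, the left ideal $I=X(\varepsilon^{-1}\rho_1)+\cdots+X(\varepsilon^{-1}\rho_n)$, an application of Boyde's Theorem 4.1, and the identification of $X/I$ with $G\mathcal{TL}_n(\delta)$. The paper states this more tersely, but the verifications you spell out (commutativity of the idempotents despite the labels, triviality of the $I$-action, and the basis of the quotient) are precisely the implicit content of that proof.
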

\begin{proof}
Since $\varepsilon$ is invertible, we have the idempotents of Lemma \ref{idempotent-lemma}.
Write $X=G\mathcal{M}_n(\delta, \varepsilon)$. Let $I=X(\varepsilon^{-1}\rho_1) + \cdots + X(\varepsilon^{-1}\rho_n)$. Applying \cite[Theorem 4.1]{Boyde} gives the result.
\end{proof}

\begin{cor}
\label{G-equiv-Motzkin-cor}
If $\delta \in k$ is also invertible then there is an isomorphism of graded $k$-modules
\[\mathrm{Tor}_{\star}^{G\mathcal{M}_{n}(\delta, \varepsilon)}\left(\mathbbm{1},\mathbbm{1}\right) \cong\mathrm{Tor}_{\star}^{k[G^n]}\left(\mathbbm{1},\mathbbm{1}\right).\]
\end{cor}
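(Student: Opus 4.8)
The plan is simply to chain together the two isomorphisms already available in this section and the previous one: Theorem \ref{motzkin-thm2} and Theorem \ref{G-temp-lieb-thm}. The hypotheses of this corollary place both $\varepsilon$ and $\delta$ in the group of units of $k$, which is precisely what is needed to invoke the two theorems in succession.

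First I would invoke Theorem \ref{motzkin-thm2}, whose standing requirement is the invertibility of $\varepsilon$, to obtain
\[\mathrm{Tor}_{\star}^{G\mathcal{M}_{n}(\delta, \varepsilon)}\left(\mathbbm{1},\mathbbm{1}\right) \cong \mathrm{Tor}_{\star}^{G\mathcal{TL}_n(\delta)}\left(\mathbbm{1},\mathbbm{1}\right).\]
Then, using the additional assumption that $\delta$ is invertible, I would apply Theorem \ref{G-temp-lieb-thm} to identify the right-hand side with the group homology of $G^n$:
\[\mathrm{Tor}_{\star}^{G\mathcal{TL}_n(\delta)}\left(\mathbbm{1},\mathbbm{1}\right) \cong \mathrm{Tor}_{\star}^{k[G^n]}\left(\mathbbm{1},\mathbbm{1}\right).\]
Composing these two isomorphisms of graded $k$-modules gives the statement.

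I do not expect any genuine obstacle: the result is a formal consequence of the two theorems, and the twin invertibility hypotheses ensure that both are simultaneously applicable. The only point worth verifying is that the trivial module $\mathbbm{1}$ appearing in each $\mathrm{Tor}$ is consistent across the chain; this is immediate, since $G\mathcal{TL}_n(\delta)$ sits inside $G\mathcal{M}_n(\delta,\varepsilon)$ as the subalgebra of diagrams with no missing edges, and the trivial module restricts compatibly along this inclusion, exactly as used in the proofs of the two theorems being cited.
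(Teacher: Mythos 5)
Your proof is correct and matches the paper's own argument exactly: the corollary is obtained by composing the isomorphism of Theorem \ref{motzkin-thm2} (using invertibility of $\varepsilon$) with that of Theorem \ref{G-temp-lieb-thm} (using invertibility of $\delta$). Your additional check on the compatibility of the trivial modules is a sensible remark, though the paper leaves it implicit.
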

\begin{proof}
This follows from Theorem \ref{motzkin-thm2} and Theorem \ref{G-temp-lieb-thm}.
\end{proof}

\begin{cor}
Let $G$ be a group. Let $\delta,\,\varepsilon \in k$ be invertible. There is an isomorphism of graded $k$-modules
\[\mathrm{Tor}_{\star}^{G\mathcal{M}_{1}(\delta, \varepsilon)}\left(\mathbbm{1},\mathbbm{1}\right) \cong H_{\star}(G,k).\]
\end{cor}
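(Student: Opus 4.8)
The plan is to observe that the hypothesis forcing $G$ to be abelian disappears entirely when $n=1$, so that the corollary can be deduced directly from the rook algebra computation of Theorem \ref{G-symm-rook-thm}. First I would describe $G\mathcal{M}_1(\delta,\varepsilon)$ explicitly. With a single node in each column there can be no left-to-left and no right-to-right connections, and every diagram is trivially planar, so the $G$-Motzkin $1$-diagrams are exactly the through-strands $d_g$ labelled by the elements $g\in G$ together with the empty diagram $\emptyset$; in particular they coincide with the $G$-rook $1$-diagrams. Moreover, composing two such diagrams can never produce a loop (this again would need at least two nodes per column), so the parameter $\delta$ is never invoked and the composition of Definition \ref{G-rook-brauer-comp-defn} agrees with that of Definition \ref{G-rook-alg-comp-defn}. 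Consequently $G\mathcal{M}_1(\delta,\varepsilon)=G\mathcal{R}_1(\varepsilon)$ as $k$-algebras, and this identification makes sense for \emph{any} group $G$, since the abelian hypothesis was only ever needed in order to assign labels to loops.

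Granting this, the result is immediate from Theorem \ref{G-symm-rook-thm} applied with $n=1$: the symmetric group $\Sigma_1$ is trivial, so $G^1\rtimes\Sigma_1\cong G$, and hence
\[\mathrm{Tor}_{\star}^{G\mathcal{M}_1(\delta,\varepsilon)}\left(\mathbbm{1},\mathbbm{1}\right)=\mathrm{Tor}_{\star}^{G\mathcal{R}_1(\varepsilon)}\left(\mathbbm{1},\mathbbm{1}\right)\cong H_{\star}\left(G,k\right).\]
I note that only the invertibility of $\varepsilon$ is actually used here; the hypothesis that $\delta$ be invertible is superfluous, exactly as in the Temperley-Lieb case discussed above.

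As an alternative, and a useful sanity check, one can argue directly, mirroring the proofs of Theorems \ref{motzkin-thm1} and \ref{motzkin-thm2}. Since $\varepsilon$ is invertible, Lemma \ref{idempotent-lemma} shows that $e=\varepsilon^{-1}\rho_1=\varepsilon^{-1}\emptyset$ is idempotent. The left ideal $I=G\mathcal{M}_1(\delta,\varepsilon)\cdot e$ equals $k\emptyset$ (because $d_g\cdot\emptyset=\emptyset$ while $\emptyset^2=\varepsilon\emptyset$), and it acts as zero on $\mathbbm{1}$ since the empty diagram has a missing edge. Applying \cite[Theorem 4.1]{Boyde} then yields $\mathrm{Tor}_{\star}^{G\mathcal{M}_1(\delta,\varepsilon)}(\mathbbm{1},\mathbbm{1})\cong\mathrm{Tor}_{\star}^{G\mathcal{M}_1(\delta,\varepsilon)/I}(\mathbbm{1},\mathbbm{1})$, and the quotient has $k$-basis $\{d_g:g\in G\}$ with $d_g d_h=d_{gh}$, so it is canonically the group algebra $k[G]$, the trivial module restricting to the trivial $k[G]$-module.

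The only real obstacle is conceptual rather than computational: one must recognize that the abelian restriction built into the definition of the $G$-Motzkin algebra is vacuous at $n=1$, so that the statement is both meaningful and correct for arbitrary $G$, and that at this level the Motzkin algebra literally coincides with the rook algebra. Once this is noticed, both routes above are entirely routine.
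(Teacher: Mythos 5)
Your proof is correct, but it takes a genuinely different route from the paper. The paper proves this corollary in one line by taking $n=1$ in Corollary \ref{G-equiv-Motzkin-cor}, which in turn chains Theorem \ref{motzkin-thm2} (Motzkin to Temperley-Lieb, using $\varepsilon$ invertible) with Theorem \ref{G-temp-lieb-thm} (Temperley-Lieb to $k[G^n]$, using $\delta$ invertible); so the paper's route passes through the $G$-Temperley-Lieb algebra and uses both invertibility hypotheses, as well as (implicitly) the hypothesis that $G$ is abelian, since every result in that chain -- and indeed the very definition of $G\mathcal{M}_n(\delta,\varepsilon)$ via Definition \ref{GRB-defn} -- is stated only for abelian $G$. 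You instead identify $G\mathcal{M}_1(\delta,\varepsilon)$ with the rook algebra $G\mathcal{R}_1(\varepsilon)$ and quote Theorem \ref{G-symm-rook-thm} with $G^1\rtimes\Sigma_1\cong G$ (or, in your second route, run Boyde's Theorem 4.1 directly on the single idempotent $\varepsilon^{-1}\rho_1$). What your route buys is twofold: it needs only $\varepsilon$ invertible, confirming that the $\delta$ hypothesis is superfluous here (the paper makes the analogous observation for Temperley-Lieb at $n=1$ in Corollary \ref{non-invertible-temp-lieb-cor}, but not for this corollary); and it makes the hypothesis ``Let $G$ be a group'' honest, since your observation that no loops can form when $n=1$ shows the algebra and the statement are meaningful for arbitrary $G$, whereas the paper's cited chain, read literally, covers only the abelian case. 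This is a genuine improvement in precision rather than just an alternative derivation.
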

\begin{proof}
This follows by taking $n=1$ in Corollary \ref{G-equiv-Motzkin-cor}.
\end{proof}

\section{A Sroka-type result for equivariant Brauer algebras}
\label{sroka-sec}
Sroka \cite[Theorem A]{Sroka} has proved a vanishing result for the homology of the Temperley-Lieb algebras when the parameter $\delta$ is not invertible and $n$ is odd. Boyde \cite[Theorem 1.13]{Boyde} extends these methods to prove that the homology of a Brauer algebra is isomorphic to the homology of the symmetric group when the parameter $\delta$ is not invertible and $n$ is odd. He refers to this style of result as a Sroka-type theorem.  

In this section we prove Sroka-type results for equivariant Brauer algebras and equivariant Temperley-Lieb algebras. Throughout this section, let $G$ be an abelian group. We will show that the homology of an equivariant Brauer algebra is isomorphic to the group homology of $G^n\rtimes \Sigma_n$ when $\delta$ is not invertible and $n$ is odd. We will show that the homology of an equivariant Temperley-Lieb algebra is isomorphic to the group homology of $G^n$ when $\delta$ is not invertible and $n$ is odd. 

\subsection{$G$-Brauer algebras with non-invertible parameter}

\begin{defn}
\label{double-diag-defn}
Given two $n$-diagrams $x$ and $y$, the \emph{double diagram} $(x,y)$ consists of three columns of $n$ nodes, with $x$ as the left-hand part of the diagram and $y$ as the right-hand part of the diagram.
\end{defn}

\begin{defn}
Let $(x,y)$ be a double diagram of $G$-Brauer $n$-diagrams. Consider the left-hand column of nodes as being labelled $l_1,\dotsc , l_n$, the middle column of nodes being labelled by $m_1,\dotsc , m_n$ and the right-hand column of nodes being labelled by $r_1,\dotsc , r_n$. For nodes $u,\,v\in \llb  l_1,\dotsc , l_n, m_1,\dotsc , m_n, r_1,\dotsc , r_n\rrb$ we write 
\[u\sim_{xy} v\]
if there is a sequence of edges connecting $u$ and $v$ in the diagram $(x,y)$.
\end{defn}

\begin{defn}
Let $e$ be a diagram and let $p$ be a link state. The \emph{sesqui-diagram} $(p,e)$ has nodes $m_1,\dotsc , m_n$ and $r_1,\dotsc , r_n$, with edges given as follows:
\begin{itemize}
\item $p$ is considered as a right link state and is embedded by mapping its nodes to the vertices $m_1,\dotsc , m_n$;
\item $e$ is embedded by mapping its left nodes to $m_1,\dotsc , m_n$ and its right nodes to $r_1,\dotsc , r_n$. 
\end{itemize}
\end{defn}

\begin{lem}
\label{double-diag-lem1}
Let $p\in GP_i$ be a link state. Suppose that $e\in G\mathcal{B}_n(\delta)$ satisfies the following properties:
\begin{enumerate}
\item the right link state of the underlying unlabelled diagram of $e$ is the underlying unlabelled diagram of $p$;
\item $r_j\sim_{p,e} m_j$ whenever $p$ has a defect at node $j$;
\item for each $j$ there exists some $k$ with $m_j\sim_{p,e} r_k$;
\item each left-to-right connection of $e$ has the label $1\in G$;
\item each right-to-right connection of $e$ has the same label as the corresponding right-to-right connection in $p$;
\item for each right-to-right connection in $p$ with label $g$, there is a left-to-left connection in $e$ with label $g^{-1}$, which shares precisely one vertex with the right-to-right connection.
\end{enumerate}
Then $ye=y$ for all $y\in GJ_p$.
\end{lem}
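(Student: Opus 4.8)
The plan is to reduce to a single diagram and then carry out a careful diagram-chase, checking separately that the underlying unlabelled diagram and the $G$-labels of $y\cdot e$ agree with those of $y$, and crucially that no loops are formed in the composite. Since composition is $k$-bilinear and $GJ_p$ is free on a set of basis diagrams, it suffices to treat the case where $y$ is a single $G$-rook-Brauer $n$-diagram whose right link state $q$ is obtained from $p$ by a sequence of $G$-splices, deletions and label replacements. I would form $y\cdot e$ by gluing the right-hand nodes of $y$ to the left-hand nodes $m_1,\dotsc,m_n$ of $e$ and forgetting the middle column; note that the left-to-left connections of $y$ and the left endpoints of its left-to-right connections are untouched, so all the work concerns the middle and right.

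First I would analyse the underlying unlabelled diagram. Each middle node $m_j$ receives at most one edge from $y$ and at most one edge from $e$, so the middle nodes decompose into paths and cycles; a cycle is exactly a loop, which would introduce a pre-factor of $\delta$. The content of condition (3), together with the observation that $q$ is obtained from $p$ only by adding right-to-right connections (splices), removing defects (deletions) or altering labels, is precisely what guarantees that every middle node lies on a path terminating at a right-hand node $r_k$ of $e$; hence no cycles occur and no factor of $\delta$ appears. Conditions (1) and (2) then show that defects of $q$ are transmitted through $e$ to the correctly indexed right node and that the right-to-right connections of $q$ are faithfully reproduced, so that the underlying unlabelled diagram of $y\cdot e$ coincides with that of $y$.

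It then remains to track the group labels along each path of the composite. Left-to-right connections of $e$ carry the label $1$ by condition (4), so every defect of $q$, and every new right-to-right connection arising from a $G$-splice whose endpoints are former defects, passes through $e$ with its label unchanged. For a right-to-right connection of $q$ inherited from $p$ with label $g$, the rerouting through $e$ traverses the left-to-left edge of label $g^{-1}$ supplied by condition (6) and is completed by the matching right-to-right edge of $e$ of label $g$ from condition (5); since $G$ is abelian, the accumulated product is $g\cdot g^{-1}\cdot g=g$, reproducing the original label. Assembling these observations gives $y\cdot e=y$ as diagrams, and bilinearity yields $ye=y$ for all $y\in GJ_p$.

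The main obstacle I anticipate is the combinatorial bookkeeping in the second and third steps: one must verify simultaneously that condition (6)'s ``precisely one vertex'' overlap reroutes each right-to-right connection of $p$ into a path escaping to the right, rather than closing it into a loop as happens with the diagram $f_p$ of Definition \ref{fp-diag-defn} in Lemma \ref{equiv-tech-lem}, while condition (5) supplies exactly the label needed for the abelian cancellation to land on $g$. Making the connectivity argument of condition (3) rigorous for the modified link state $q$, rather than for $p$ itself, is the delicate point, and I would handle it by phrasing everything in terms of the connectivity relation $\sim$ on the composite and on the sesqui-diagram $(p,e)$.
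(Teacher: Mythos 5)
Your overall skeleton---reduce to a single basis diagram by bilinearity, analyse the unlabelled composite as a disjoint union of paths and cycles through the middle column, then track labels along each path---is exactly the decomposition underlying the paper's (very terse) proof, which delegates the unlabelled part to Boyde's Lemma 7.6 (your conditions (1)--(3)) and asserts that conditions (4)--(6) handle the labels. Your unlabelled analysis is sound in outline: condition (3) does rule out cycles (any cycle would have to alternate $q$-edges and left-to-left edges of $e$, and its vertices would then form a component of the sesqui-diagram containing no $r$-node), and condition (2) routes each defect of $q$ to the correctly indexed node $r_j$.

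The genuine gap is in your third paragraph, the label bookkeeping, which is the only content of this lemma beyond Boyde. Two things go wrong. First, a defect of $q$ at position $a$ does \emph{not} ``pass through $e$ with its label unchanged'' merely by condition (4): the path from $m_a$ to $r_a$ traverses, in general, several left-to-left edges of $e$ and several right-to-right edges of $y$ (those sitting at the connection positions of $p$), so its label is preserved only because these intermediate labels cancel in pairs. For example, take $n=3$, $p$ with connection $\{1,2\}$ labelled $g$ and a defect at $3$, and $e$ with left-to-right edge $m_1$--$r_3$ (label $1$), left-to-left edge $\{m_2,m_3\}$ (label $g^{-1}$), right-to-right edge $\{r_1,r_2\}$ (label $g$): the path from the defect runs $m_3$--$m_2$--$m_1$--$r_3$ and accumulates $g^{-1}\cdot g\cdot 1$, not just $1$. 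Second, your account of an inherited right-to-right connection is structurally impossible: a path through the middle can never traverse a right-to-right edge of $e$, since each endpoint $r_c$ of such an edge has that edge as its \emph{unique} edge of $e$ (Brauer diagrams have exactly one edge per vertex), so $\{r_c,r_d\}$ is a connected component of the composite all by itself. The composite's connection $\{r_c,r_d\}$ therefore carries exactly $e$'s label $g$, supplied outright by condition (5); the product $g\cdot g^{-1}\cdot g$ never occurs anywhere. The correct division of labour is: condition (5) gives the inherited right-to-right connections their labels directly, while conditions (4) and (6) must be combined to show that the label product along each path $m_a$ to $r_a$ telescopes to $1$ (each $p$-connection label $g_i$ on the path cancelling against the adjacent left-to-left edge of $e$ one step nearer to $m_a$, whose label is $g_i^{-1}$); it is this telescoping that preserves the labels of defects and of spliced connections. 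Tracing that cancellation along the paths---and checking that the edges promised by condition (6) really are the ones adjacent along each path---is precisely the step your proposal needed to carry out, and the computation you substituted for it is not correct.
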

\begin{proof}
The proof of \cite[Lemma 7.6]{Boyde} ensures that the equality is true for the underlying unlabelled diagrams, using the first three points. Our added conditions ensure that the equality holds for labelled diagrams.
\end{proof}

\begin{lem}
\label{double-diag-lem2}
Let $p\in GP_i$ be a link state with no missing edges and at least one defect. There exists a diagram $f_p\in G\mathcal{B}_n(\delta)$ satisfying the conditions of Lemma \ref{double-diag-lem1}.
\end{lem}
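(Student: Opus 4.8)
The plan is to construct $f_p$ by decorating a suitable underlying unlabelled Brauer diagram with $G$-labels, so that conditions (1)--(3) are controlled by the unlabelled diagram and conditions (4)--(6) by the labelling. Forgetting the labels, conditions (1)--(3) are precisely the data asked of the unlabelled diagram in the corresponding unlabelled statement, so I would take the underlying unlabelled diagram $\bar f_p$ of $f_p$ to be the one produced by the construction of \cite{Boyde} (the unlabelled analogue of the present lemma). This already guarantees (1)--(3) and, crucially, that $\bar f_p$ is a genuine Brauer diagram: every node is matched and there are no missing edges.

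First I would pin down the combinatorial shape of $\bar f_p$. Its right-to-right connections are exactly the arcs of $p$; its $i$ left-to-right connections terminate at the $i$ defect positions of $p$ on the right, so that the right link state of $\bar f_p$ is the underlying diagram of $p$ (this is (1)); and its left-to-left connections form a matching on the remaining left nodes that is \emph{staggered} against the arcs, each left-to-left connection sharing precisely one vertex with a distinct arc of $p$. The point of the staggering is that, in the sesqui-diagram, the edges on the middle column — the arcs of $p$ together with the left-to-left connections of $\bar f_p$ — form a disjoint union of simple paths rather than closed alternating cycles; each such path terminates at a node carrying a left-to-right strand, which exits to the right column. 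This yields (3) (every $m_j$ reaches some $r_k$), and the routing is arranged so that each defect node $m_j$ exits at $r_j$, giving (2). Note that the hypothesis of \emph{at least one} defect is exactly what prevents a component from being a closed cycle with no exit, so it is essential here.

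Having fixed $\bar f_p$, I would decorate it as follows: give each right-to-right connection the label of the corresponding arc of $p$, which is (5); give each left-to-right connection the label $1\in G$, which is (4); and give each left-to-left connection the inverse of the label of the unique arc of $p$ with which it shares a vertex, which is (6). Because the staggering sets up a bijection between left-to-left connections and arcs, each left-to-left connection receives a single unambiguous label, so the decoration is well defined and produces a bona fide $G$-Brauer $n$-diagram $f_p$. Verification is then immediate: (1) holds since $\bar f_p$ has the correct right link state; (2) and (3) are inherited from the connectivity of $\bar f_p$ and are unaffected by labels; and (4)--(6) hold by the three prescriptions just given.

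The main obstacle lies entirely in the unlabelled layer, namely arranging the left-to-left matching so that it simultaneously (a) is a perfect matching on the non-defect left nodes making $\bar f_p$ a Brauer diagram, (b) meets each arc in exactly one vertex so that (6) can be imposed consistently, and (c) threads every middle node — and in particular each defect $m_j$ out to its partner $r_j$ — without forming closed alternating cycles. This is precisely the content of the unlabelled construction of \cite{Boyde}, and it is where the condition $i\geq 1$ is used. Once this backbone is in place the $G$-labelling is routine, and the inverse labels imposed in (6) are exactly what ensures, via Lemma \ref{double-diag-lem1}, that the loops formed in $y\cdot f_p$ carry the identity label.
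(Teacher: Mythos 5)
Your proposal is correct and follows essentially the same route as the paper: take the unlabelled diagram from Boyde's construction (his Lemma 7.7) to secure conditions (1)--(3), then label left-to-right connections by $1\in G$, preserve the labels of $p$ on right-to-right connections, and place inverse labels on the left-to-left connections to satisfy (4)--(6). Your additional discussion of the staggered matching and the role of the defect hypothesis is a faithful elaboration of what Boyde's unlabelled construction provides, not a different argument.
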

\begin{proof}
The proof of \cite[Lemma 7.7]{Boyde} tells us how to construct an unlabelled diagram satisfying the first three conditions. We satisfy the three labelling conditions by adding identity labels to left-to-right connections, preserving the labels on right-to-right-connections and adding the necessary inverses to left-to-left connections.
\end{proof}

\begin{thm}
\label{non-invertible-equiv-thm}
Let $n$ be a positive integer. Let $I_0$ be the two-sided ideal of $G\mathcal{B}_n(\delta)$ that is free as a $k$-module on diagrams with no left-to-right connections. Then there is an isomorphism of graded $k$-modules
\[\mathrm{Tor}_{\star}^{G\mathcal{B}_n(\delta)/I_0}\left(\mathbbm{1},\mathbbm{1}\right) \cong \mathrm{Tor}_{\star}^{k[G^n\rtimes \Sigma_n]}\left(\mathbbm{1},\mathbbm{1}\right) = H_{\star}\left(G^n\rtimes \Sigma_n , k\right).\]
\end{thm}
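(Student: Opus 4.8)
The plan is to climb the filtration of the quotient algebra $\bar A := G\mathcal{B}_n(\delta)/I_0$ by the two-sided ideals $\bar I_i := I_i/I_0$, using the diagrams $f_p$ of Lemma \ref{double-diag-lem2} as substitutes for the idempotents required in the invertible theory. First I would record the target of the climb: applying Proposition \ref{group-alg-retract-prop2} to $A = G\mathcal{B}_n(\delta)$, which is free on all $G$-Brauer $n$-diagrams and contains the identity diagram, identifies $A_{max}=A/(A\cap I_{n-1})$ with the group ring $k[G^n\rtimes\Sigma_n]$, under which $\mathbbm{1}$ restricts to the trivial module. Since $\bar A/\bar I_{n-1}=A/I_{n-1}=A_{max}$, it suffices to produce a chain of isomorphisms
\[\mathrm{Tor}_{\star}^{\bar A}(\mathbbm{1},\mathbbm{1}) \cong \mathrm{Tor}_{\star}^{\bar A/\bar I_1}(\mathbbm{1},\mathbbm{1}) \cong \cdots \cong \mathrm{Tor}_{\star}^{\bar A/\bar I_{n-1}}(\mathbbm{1},\mathbbm{1}) = \mathrm{Tor}_{\star}^{k[G^n\rtimes\Sigma_n]}(\mathbbm{1},\mathbbm{1}).\]

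The crucial observation, and the one that lets the argument run for every positive integer $n$ with no hypothesis on $\delta$, is that every right link state $p\in GP_i$ occurring in the basis of $\bar A$ has at least one defect. Indeed, for a $G$-Brauer $n$-diagram the number of through-strands is congruent to $n$ modulo $2$, so when $n$ is odd we automatically have $i\geqslant 1$ and $I_0=0$, whereas when $n$ is even passing to the quotient by $I_0$ discards exactly the defect-free link states, leaving only those with $i\geqslant 2$. Consequently, for each relevant $p$ (which, being a Brauer link state, has no missing edges) Lemma \ref{double-diag-lem2} supplies a diagram $f_p\in G\mathcal{B}_n(\delta)$ satisfying the hypotheses of Lemma \ref{double-diag-lem1}, whence $y\cdot f_p = y$ for every $y\in GJ_p$. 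This is an \emph{exact} equality, with no prefactor of $\delta$, in contrast with Lemma \ref{equiv-tech-lem}; it is precisely this feature that removes any need to invert $\delta$.

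With these $f_p$ in hand I would run the filtration argument following Boyde's Sroka-type theorem \cite[Theorem 1.13]{Boyde}, with $f_p$ playing the role that the idempotent $e_p$ plays in Proposition \ref{equiv-tech-prop} and in the proof of \cite[Theorem 6.5]{Boyde}. At each step the kernel of $\bar A/\bar I_{i-1}\to \bar A/\bar I_i$ decomposes over the link states $p\in GP_i$ into modules built from $GJ_p$, and the relation $y\cdot f_p=y$ shows that right multiplication by $f_p$ is a retraction exhibiting each such piece as a direct summand; this renders the subquotient homologically trivial against $\mathbbm{1}$, so the induced map on $\mathrm{Tor}_\star(\mathbbm{1},\mathbbm{1})$ is an isomorphism. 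The equivariant labelling conditions, namely conditions (4)--(6) of Lemma \ref{double-diag-lem1} as arranged by the construction in Lemma \ref{double-diag-lem2}, guarantee that the retraction respects the $G$-labels, and the hypothesis that $G$ is abelian is what makes the underlying composition well defined and associative, cf. Remark \ref{abelian-rem}. Composing the chain then yields the stated identification with $H_\star(G^n\rtimes\Sigma_n,k)$.

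The main obstacle I anticipate is exactly the point just sketched: justifying that the non-idempotent diagrams $f_p$ can replace the genuine idempotents of the invertible theory. In Proposition \ref{equiv-tech-prop} one exploits both $e_p^2=e_p$ and the equality of left ideals $A\cdot e_p = A\cap GJ_p$ to recognise the link-state summand as relatively projective; here only the one-sided relation $y\cdot f_p=y$ is available, so the splitting of the relevant short exact sequences of $\bar A$-modules must be produced from that relation alone. Checking that multiplication by $f_p$ genuinely furnishes a label-compatible retraction, rather than merely a $k$-linear one, is the delicate technical heart of the argument, and it is exactly what Lemmas \ref{double-diag-lem1} and \ref{double-diag-lem2} are designed to deliver.
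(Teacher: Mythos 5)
Your overall route coincides with the paper's: climb the filtration via Proposition \ref{equiv-tech-prop}, use the diagrams $f_p$ of Lemma \ref{double-diag-lem2}, and finish with the identification $G\mathcal{B}_n(\delta)/\left(G\mathcal{B}_n(\delta)\cap I_{n-1}\right)\cong k[G^n\rtimes \Sigma_n]$ from Proposition \ref{group-alg-retract-prop2}. But there is a genuine gap at what you call the ``delicate technical heart'': you assert that $f_p$ is \emph{not} idempotent, that Proposition \ref{equiv-tech-prop} therefore cannot be invoked as stated, and that the splittings must instead be manufactured from the one-sided relation $y\cdot f_p=y$ alone --- and you never carry out that replacement argument. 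The missing observation is that $f_p$ \emph{is} a genuine idempotent, and this follows instantly from the very lemma you quote: by conditions (1), (4) and (5) of Lemma \ref{double-diag-lem1}, the right link state of $f_p$ agrees with $p$ up to label replacements on the defects, so $f_p$ itself lies in $GJ_p$, and applying Lemma \ref{double-diag-lem1} with $y=f_p$ gives $f_p\cdot f_p=f_p$. This is exactly how the paper proceeds.

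Once this is noticed, no new machinery is needed. The required equality of left ideals also falls out: $G\mathcal{B}_n(\delta)\cdot f_p\subseteq G\mathcal{B}_n(\delta)\cap GJ_p$ because $GJ_p$ is a left ideal containing $f_p$, and conversely every basis diagram $y$ of $G\mathcal{B}_n(\delta)\cap GJ_p$ satisfies $y=y\cdot f_p\in G\mathcal{B}_n(\delta)\cdot f_p$ by Lemma \ref{double-diag-lem1}. Hence the second hypothesis of Proposition \ref{equiv-tech-prop} holds verbatim with $e_p=f_p$, and applying that proposition with $l=1$ and $m=n-1$ (noting $I_{l-1}=I_0$, and that every $p\in GP_i$ with $1\leqslant i\leqslant n-1$ arising from a Brauer diagram has no missing edges and at least one defect, so Lemma \ref{double-diag-lem2} applies) completes the proof. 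Your parity discussion and your identification of the top quotient are both correct; the error is solely the claim that idempotency fails, which led you to defer the crux of the argument to a step you did not supply and which, as it happens, is unnecessary.
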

\begin{proof}
We apply Proposition \ref{equiv-tech-prop} with $l=1$ and $m=n-1$, noting that $I_{l-1}=I_0$ and
\[ \frac{G\mathcal{B}_n(\delta)}{G\mathcal{B}_n(\delta) \cap I_{n-1}}\cong k[G^n\rtimes \Sigma_n].\]
We need to check that the second hypothesis of Proposition \ref{equiv-tech-prop} holds.

Let $1\leqslant i\leqslant n$. We must take $p\in GP_i$ and find an idempotent generating $G\mathcal{B}_n(\delta) \cap GJ_p$ as a left ideal.

Consider the diagram $f_p$ of Lemma \ref{double-diag-lem2}. By Lemma \ref{double-diag-lem1} with $y=f_p$ we see that $f_p$ is idempotent. 

The ideal generated by $f_p$ is closed under $k$-linear combinations and $G\mathcal{B}_n(\delta)\cap GJ_p$ is free on a basis of diagrams so it suffices to show that for each diagram $y$ in $G\mathcal{B}_n(\delta)\cap GJ_p$, there exists $x\in G\mathcal{B}_n(\delta)$ such that $xf_p=y$. Taking $x$ to be $y$ satisfies this equation by Lemma \ref{double-diag-lem1}. 
\end{proof}

\begin{cor}
Let $n$ be odd. There is an isomorphism of graded $k$-modules
\[\mathrm{Tor}_{\star}^{G\mathcal{B}_n(\delta)}\left(\mathbbm{1},\mathbbm{1}\right) \cong \mathrm{Tor}_{\star}^{k[G^n\rtimes \Sigma_n]}\left(\mathbbm{1},\mathbbm{1}\right) = H_{\star}\left(G^n\rtimes \Sigma_n , k\right).\]
\end{cor}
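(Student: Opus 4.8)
The plan is to deduce this corollary immediately from Theorem \ref{non-invertible-equiv-thm} by showing that the hypothesis ``$n$ odd'' forces the ideal $I_0$ to be the zero ideal, so that $G\mathcal{B}_n(\delta)/I_0 = G\mathcal{B}_n(\delta)$ and the two $\mathrm{Tor}$-groups in question literally agree.

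The key observation is a parity constraint on $G$-Brauer $n$-diagrams. Since such a diagram has no missing edges, every one of its $2n$ nodes is matched by an edge. Writing $a$, $b$ and $c$ for the numbers of left-to-left, right-to-right and left-to-right connections respectively, counting the $n$ left-hand nodes gives $2a + c = n$, so that $c \equiv n \pmod{2}$. When $n$ is odd, $c$ is therefore odd, and in particular $c \geqslant 1$: every $G$-Brauer $n$-diagram has at least one left-to-right connection. The equivariant labels play no role in this count, since omitting them recovers an ordinary Brauer diagram to which the argument applies verbatim.

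Consequently there are no $G$-Brauer $n$-diagrams with zero left-to-right connections, and hence the ideal $I_0$, which by definition is free as a $k$-module on precisely these diagrams, is trivial. Thus $G\mathcal{B}_n(\delta)/I_0 = G\mathcal{B}_n(\delta)$, and the asserted isomorphism
\[\mathrm{Tor}_{\star}^{G\mathcal{B}_n(\delta)}\left(\mathbbm{1},\mathbbm{1}\right) \cong H_{\star}\left(G^n\rtimes \Sigma_n , k\right)\]
is exactly the conclusion of Theorem \ref{non-invertible-equiv-thm}.

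I expect no substantive obstacle here: once the parity observation is in hand the statement is immediate, and this is precisely the mechanism underlying Sroka-type results—the odd-$n$ hypothesis collapses the bottom ideal rather than requiring $\delta$ to be invertible, which is why no condition on $\delta$ appears. The only point worth verifying is that $I_0$ really is spanned by the diagrams with no left-to-right connections (so that its vanishing follows from the nonexistence of such diagrams), and this is immediate from its definition in Theorem \ref{non-invertible-equiv-thm}.
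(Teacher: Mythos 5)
Your proposal is correct and follows exactly the paper's route: the paper also deduces the corollary from Theorem \ref{non-invertible-equiv-thm} by observing that for $n$ odd the ideal $I_0$ meets $G\mathcal{B}_n(\delta)$ trivially. Your explicit parity count ($2a+c=n$, so $c\equiv n \pmod 2$ forces $c\geqslant 1$) simply fills in the detail the paper leaves implicit.
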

\begin{proof}
We note that for $n$ odd we have $G\mathcal{B}_n(\delta) \cap I_0=0$. The corollary now follows from Theorem \ref{non-invertible-equiv-thm}.
\end{proof}

\subsection{$G$-Temperley-Lieb algebras with non-invertible parameters}

\begin{defn}
We say that a link state is \emph{planar} if it occurs as the right link state of a $G$-Temperley-Lieb diagram.
\end{defn}

\begin{lem}
\label{sroka-temp-lieb-lem}
Let $p\in GP_i$ be a planar link state with no missing edges and at least one defect. There exists a diagram $e_p\in G\mathcal{TL}_n(\delta)$ satisfying the properties of Lemma \ref{double-diag-lem1}.
\end{lem}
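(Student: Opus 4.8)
The plan is to follow the proof of Lemma \ref{double-diag-lem2} almost verbatim, the only new requirement being that the underlying unlabelled diagram of $e_p$ must now be \emph{planar}, so that $e_p$ is a genuine $G$-Temperley--Lieb diagram rather than merely a $G$-Brauer diagram. Concretely, I would first produce an unlabelled planar diagram with no missing edges satisfying conditions (1)--(3) of Lemma \ref{double-diag-lem1}, and then decorate it with group labels to secure conditions (4)--(6). The existence of the unlabelled planar diagram is the Temperley--Lieb case of the construction used by Sroka \cite{Sroka} and recovered in Boyde's treatment of the Temperley--Lieb algebra \cite{Boyde}; it is available here precisely because $p$ is assumed planar and to carry at least one defect.

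For the unlabelled construction I would argue as follows. Condition (1) forces the right-to-right arcs of $e_p$ to reproduce the arcs of $p$ and forces the right-hand nodes sitting at the defect positions of $p$ to be the endpoints of left-to-right strands. Since $p$ is planar its arcs are non-crossing, so copying them onto the right of $e_p$ introduces no crossings; the remaining freedom lies in how the left-hand nodes are paired into left-to-left arcs and routed across to the right by the left-to-right strands. The key point is that a single defect of $p$ opens a channel through which the nested block of left-hand arcs can be threaded to the right-hand nodes without creating a crossing, so that every middle node $m_j$ is connected to some right node $r_k$ in the sesqui-diagram $(p,e_p)$, and so that each left-to-left arc can be arranged to share precisely one vertex with an arc of $p$. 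This is where both hypotheses are used: planarity of $p$ makes the threading possible without crossings, and the existence of a defect is what makes condition (3) attainable at all, since a planar state with no defects admits no left-to-right strands and hence no middle node could ever reach a right node.

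Having fixed the unlabelled planar diagram, the labelling step is identical to that of Lemma \ref{double-diag-lem2}: I would put the identity label $1\in G$ on every left-to-right strand (condition (4)), copy the label of each arc of $p$ onto the corresponding right-to-right connection of $e_p$ (condition (5)), and, for each arc of $p$ carrying a label $g$, place the inverse label $g^{-1}$ on the left-to-left connection of $e_p$ that shares a vertex with it (condition (6)); the required left-to-left partner exists by the planar pairing described above. The resulting $e_p$ is then a $G$-Temperley--Lieb diagram satisfying all of conditions (1)--(6), as required. The main obstacle is exactly the planar routing underlying condition (3): one must verify that the nested arc structure of $p$ together with one defect really can be completed to a \emph{crossing-free} diagram in which every left node is connected through to the right, and it is here that the planarity of $p$ --- as opposed to the merely Brauer setting of Lemma \ref{double-diag-lem2} --- does the essential work.
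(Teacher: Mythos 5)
Your proposal is correct and follows essentially the same route as the paper: the paper likewise constructs the underlying unlabelled planar diagram via the Temperley--Lieb construction in Boyde (Lemma 7.12, which is Sroka's construction), and then labels it exactly as you do --- identities on left-to-right strands, preserved labels on right-to-right arcs, and inverse labels on the left-to-left arcs sharing a vertex with them. Your additional discussion of the planar threading through the defect channel is detail the paper delegates to the citation, but it is consistent with that construction.
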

\begin{proof}
The underlying unlabelled diagram is constructed as in \cite[Lemma 7.12]{Boyde}. We add identity labels to all left-to-right connections. We preserve the labels on right-to-right connections and we add the necessary inverses to left-to-left connections.
\end{proof}

\begin{thm}
\label{sroka-temp-lieb-thm}
Let $n$ be a positive integer. Let $I_0$ be the two-sided ideal of $G\mathcal{TL}_n(\delta)$ which is free as a $k$-module on diagrams with no left-to-right connections. Then there is an isomorphism of graded $k$-modules 
\[\mathrm{Tor}_{\star}^{G\mathcal{TL}_n(\delta)/I_0}\left(\mathbbm{1},\mathbbm{1}\right) \cong \mathrm{Tor}_{\star}^{k[G^n]}\left(\mathbbm{1},\mathbbm{1}\right).\]
\end{thm}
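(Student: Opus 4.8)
The plan is to run the proof of Theorem \ref{non-invertible-equiv-thm} verbatim, replacing the $G$-Brauer algebra by the $G$-Temperley-Lieb algebra throughout. Concretely, I would apply Proposition \ref{equiv-tech-prop} to the subalgebra $A=G\mathcal{TL}_n(\delta)$ with $l=1$ and $m=n-1$. Since $A\cap I_{l-1}=I_0$, the left-hand end of the resulting chain of isomorphisms is $\mathrm{Tor}_{\star}^{G\mathcal{TL}_n(\delta)/I_0}\left(\mathbbm{1},\mathbbm{1}\right)$. For the right-hand end I would invoke the identification
\[\frac{G\mathcal{TL}_n(\delta)}{G\mathcal{TL}_n(\delta)\cap I_{n-1}}\cong k[G^n]\]
already used in the proof of Theorem \ref{G-temp-lieb-thm}, giving $\mathrm{Tor}_{\star}^{G\mathcal{TL}_n(\delta)/(G\mathcal{TL}_n(\delta)\cap I_{n-1})}\left(\mathbbm{1},\mathbbm{1}\right)\cong\mathrm{Tor}_{\star}^{k[G^n]}\left(\mathbbm{1},\mathbbm{1}\right)$. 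Composing the chain then yields the claimed isomorphism.

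The substance of the argument is checking the second hypothesis of Proposition \ref{equiv-tech-prop}: for each $i$ with $1\leqslant i\leqslant n-1$ and each $p\in GP_i$ for which $A$ contains a diagram with right link state $p$, I must produce an idempotent $e_p\in A$ with $A\cdot e_p=A\cap GJ_p$. Because $A=G\mathcal{TL}_n(\delta)$, any such $p$ is automatically planar (it arises as the right link state of a $G$-Temperley-Lieb diagram) and has no missing edges, while the constraint $i\geqslant 1$ guarantees at least one defect; these are exactly the hypotheses of Lemma \ref{sroka-temp-lieb-lem}, which supplies a diagram $e_p\in G\mathcal{TL}_n(\delta)$ satisfying the conditions of Lemma \ref{double-diag-lem1}. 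Applying Lemma \ref{double-diag-lem1} with $y=e_p$ shows $e_p$ is idempotent, and since $A\cap GJ_p$ is free on a basis of $G$-Temperley-Lieb diagrams, it suffices to hit each basis diagram $y$: taking $x=y$ and applying Lemma \ref{double-diag-lem1} again gives $x\cdot e_p=y\cdot e_p=y$, so $e_p$ generates $A\cap GJ_p$ as a left ideal. One point to record is that Lemma \ref{double-diag-lem1} is phrased for $G\mathcal{B}_n(\delta)$; this is harmless because $G\mathcal{TL}_n(\delta)$ is a subalgebra of $G\mathcal{B}_n(\delta)$, so the relation $y\cdot e_p=y$ holds in $G\mathcal{B}_n(\delta)$ and therefore in $G\mathcal{TL}_n(\delta)$.

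The only genuine obstacle, relative to the Brauer case, is keeping everything planar: the idempotent $e_p$ must be an honest $G$-Temperley-Lieb diagram rather than merely a $G$-Brauer diagram, since otherwise it would not lie in $A$. This is precisely what Lemma \ref{sroka-temp-lieb-lem} secures, its underlying unlabelled construction following \cite[Lemma 7.12]{Boyde} and its labelling being fixed by assigning identity labels to left-to-right connections, preserving the labels on right-to-right connections, and inserting the corresponding inverses on left-to-left connections. Once planarity of $e_p$ is granted, every remaining step is formally identical to Theorem \ref{non-invertible-equiv-thm}.
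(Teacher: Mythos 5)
Your proposal is correct and is essentially the paper's own proof: the paper simply states that the result ``follows the same proof as Theorem \ref{non-invertible-equiv-thm}, using Lemma \ref{sroka-temp-lieb-lem} in place of Lemma \ref{double-diag-lem2},'' which is exactly the argument you have unpacked, including the application of Proposition \ref{equiv-tech-prop} with $l=1$, $m=n-1$, the identification of the top quotient with $k[G^n]$, and the idempotent verification via Lemma \ref{double-diag-lem1}. Your added remarks (planarity of the link states arising from $G\mathcal{TL}_n(\delta)$, and that Lemma \ref{double-diag-lem1} applies inside the subalgebra) are correct and only make explicit what the paper leaves implicit.
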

\begin{proof}
This follows the same proof as Theorem \ref{non-invertible-equiv-thm}, using Lemma \ref{sroka-temp-lieb-lem} in place of Lemma \ref{double-diag-lem2}.
\end{proof}

\begin{cor}
\label{TL-nodd-cor}
Let $n$ be odd. There is an isomorphism of graded $k$-modules
\[\mathrm{Tor}_{\star}^{G\mathcal{TL}_n(\delta)}\left(\mathbbm{1},\mathbbm{1}\right) \cong \mathrm{Tor}_{\star}^{k[G^n]}\left(\mathbbm{1},\mathbbm{1}\right).\]
\end{cor}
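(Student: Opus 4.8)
The plan is to mirror exactly the argument used for the $G$-Brauer corollary following Theorem \ref{non-invertible-equiv-thm}, reducing the whole statement to the observation that the obstructing ideal $I_0$ vanishes when $n$ is odd. First I would recall that Theorem \ref{sroka-temp-lieb-thm} already supplies, for every positive integer $n$, an isomorphism $\mathrm{Tor}_{\star}^{G\mathcal{TL}_n(\delta)/I_0}(\mathbbm{1},\mathbbm{1}) \cong \mathrm{Tor}_{\star}^{k[G^n]}(\mathbbm{1},\mathbbm{1})$, where $I_0$ is the two-sided ideal spanned by those diagrams having no left-to-right connections. Thus the only thing left to check is that when $n$ is odd this quotient is the whole algebra, that is, that $I_0 = 0$ inside $G\mathcal{TL}_n(\delta)$.

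The key step is a parity count on the left-hand column of a $G$-Temperley-Lieb $n$-diagram. Since a Temperley-Lieb diagram has no missing edges, each of the $n$ left-hand nodes is the endpoint of exactly one edge. Writing $a$ for the number of left-to-left connections and $b$ for the number of left-to-right connections, the left nodes are partitioned so that $n = 2a + b$, whence $b \equiv n \pmod 2$. When $n$ is odd this forces $b \geqslant 1$, so there is no $G$-Temperley-Lieb $n$-diagram with zero left-to-right connections. Consequently the spanning set of $I_0$ is empty, $I_0 = 0$, and $G\mathcal{TL}_n(\delta)/I_0 = G\mathcal{TL}_n(\delta)$. Substituting this identification into Theorem \ref{sroka-temp-lieb-thm} yields the claimed isomorphism.

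There is essentially no obstacle here: all the homological content lives in Theorem \ref{sroka-temp-lieb-thm}, and the corollary is simply the special case in which the ideal one quotients by is already trivial. The only point requiring care is the parity bookkeeping, which is identical to the Brauer computation and relies crucially on the fact that Temperley-Lieb diagrams have no missing edges.
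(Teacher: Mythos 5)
Your proposal is correct and follows exactly the paper's argument: apply Theorem \ref{sroka-temp-lieb-thm} and observe that for odd $n$ the parity constraint (no missing edges forces $n = 2a + b$, so $b \geqslant 1$) makes $I_0$ vanish. In fact your write-up is slightly cleaner than the paper's, which asserts the vanishing as $G\mathcal{B}_n(\delta) \cap I_0 = 0$ (an apparent typo for $G\mathcal{TL}_n(\delta)$, carried over from the Brauer corollary) without spelling out the parity count you give.
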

\begin{proof}
We note that for $n$ odd we have $G\mathcal{B}_n(\delta) \cap I_0=0$. The corollary now follows from Theorem \ref{sroka-temp-lieb-thm}.
\end{proof}

\begin{cor}
\label{non-invertible-temp-lieb-cor}
There is an isomorphism of graded $k$-modules
\[\mathrm{Tor}_{\star}^{G\mathcal{TL}_1(\delta)}\left(\mathbbm{1},\mathbbm{1}\right) \cong \mathrm{Tor}_{\star}^{k[G]}\left(\mathbbm{1},\mathbbm{1}\right) = H_{\star}\left(G , k\right).\]
\end{cor}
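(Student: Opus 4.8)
The plan is to read off this statement as the $n=1$ instance of Corollary~\ref{TL-nodd-cor}. The key observation is that $n=1$ is odd, so Corollary~\ref{TL-nodd-cor} applies directly and, crucially, carries no hypothesis on $\delta$: it requires only that $n$ be odd, which is precisely what guarantees $G\mathcal{TL}_n(\delta)\cap I_0=0$. This is exactly what lets us dispense with the invertibility of $\delta$ that was needed in the earlier corollary following Theorem~\ref{G-temp-lieb-thm}, thereby answering the question posed in the preceding remark.

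Concretely, I would first apply Corollary~\ref{TL-nodd-cor} with $n=1$ to obtain
\[\mathrm{Tor}_{\star}^{G\mathcal{TL}_1(\delta)}\left(\mathbbm{1},\mathbbm{1}\right) \cong \mathrm{Tor}_{\star}^{k[G^1]}\left(\mathbbm{1},\mathbbm{1}\right).\]
Then I would identify $G^1$ with $G$, so that the right-hand side becomes $\mathrm{Tor}_{\star}^{k[G]}\left(\mathbbm{1},\mathbbm{1}\right)$, and finally recall that this $\mathrm{Tor}$ group is, by definition, the group homology $H_{\star}(G,k)$ with trivial coefficients. Here the trivial module $\mathbbm{1}$ restricts to the trivial $k[G]$-module, since the single $G$-labelled horizontal edge has a left-to-right connection and hence acts as $1\in k$, in the same way as in the proof of Theorem~\ref{braid-rook-thm}.

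There is essentially no obstacle to overcome: all of the substantive work has already been carried out in establishing Theorem~\ref{sroka-temp-lieb-thm} and Corollary~\ref{TL-nodd-cor}, whose argument (via Proposition~\ref{equiv-tech-prop} and Lemma~\ref{sroka-temp-lieb-lem}) never invokes the invertibility of $\delta$. The only things to verify are the triviality that $1$ is odd, so that Corollary~\ref{TL-nodd-cor} is applicable, and the canonical identification $G^1\cong G$; both are immediate.
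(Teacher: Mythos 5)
Your proposal is correct and is exactly the paper's own argument: apply Corollary \ref{TL-nodd-cor} with $n=1$ (which is odd, so no invertibility of $\delta$ is needed) and identify $\mathrm{Tor}_{\star}^{k[G^1]}\left(\mathbbm{1},\mathbbm{1}\right)$ with $H_{\star}(G,k)$. The extra checks you note (the identification $G^1\cong G$ and the restriction of the trivial module) are harmless elaborations of the same proof.
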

\begin{proof}
Take $n=1$ in Corollary \ref{TL-nodd-cor} and identify the right-hand side with $H_{\star}(G,k)$.
\end{proof}

\bibliographystyle{alpha}
\bibliography{gen-brauer-refs}

\end{document}